\numberwithin{equation}{section}
\theoremstyle{plain}
 \newtheorem{thm}{Theorem}[section]
 \newtheorem{lem}[thm]{Lemma}
 \newtheorem{cor}[thm]{Corollary}
 \newtheorem{prop}[thm]{Proposition}
\theoremstyle{definition}
 \newtheorem{defn}[thm]{Definition}
 \newtheorem{ex}[thm]{Example}
 \newtheorem{rem}[thm]{Remark}
\newcommand{\gm}{\gamma}
\newcommand{\dl}{\delta}
\newcommand{\zt}{\zeta}
\newcommand{\ld}{\lambda}
\newcommand{\sg}{\sigma}
\newcommand{\ph}{\varphi}
\newcommand{\mcal}{\mathcal}
\newcommand{\wh}{\widehat}
\newcommand{\R}{\mathbb{R}}
\newcommand{\Z}{\mathbb{Z}}
\newcommand{\T}{\mathbb{T}}
\newcommand{\law}{\mathcal L}
\newcommand{\cl}{\colon}
\newcommand{\q}{\quad}
\newcommand{\qq}{\qquad}
\newcommand{\bN}{\mathbb{N}}
\newcommand{\bZ}{\mathbb{Z}}
\newcommand{\bR}{\mathbb{R}}
\newcommand{\bQ}{\mathbb{Q}}
\newcommand{\bC}{\mathbb{C}}
\newcommand{\cB}{\mathcal{B}}
\newcommand{\cA}{\mathcal{A}}
\newcommand{\supp}{\mbox{\rm supp }}
\newcommand{\qid}{\mbox{\rm q.i.d.}}
\newcommand{\bE}{\mathbb{E}}
\newcommand{\bL}{\mathbb{L}}
\newcommand{\re}{\mathrm{e}}
\newcommand{\ri}{\mathrm{i}}
\newcommand{\di}{\mathrm{d}}
\newcommand{\one}{\mathbf{1}}
\begin{document}
\author{Alexander Lindner \and Lei Pan \and Ken-iti Sato}

\title{On quasi-infinitely divisible distributions}
\subjclass[2010]{60E07}
\maketitle

\begin{abstract}
A quasi-infinitely divisible distribution on $\mathbb{R}$ is a probability distribution whose characteristic function allows a L\'evy-Khintchine type representation with a  \lq\lq signed L\'evy measure\rq\rq, rather than a L\'evy measure.
Quasi-infinitely divisible distributions appear naturally in the factorization of infinitely divisible distributions. Namely, a distribution $\mu$ is quasi-infinitely divisible if and only if there are two infinitely divisible distributions $\mu_1$ and $\mu_2$ such that $\mu_1 \ast \mu = \mu_2$. The present paper studies certain properties of quasi-infinitely divisible distributions in terms of their characteristic triplet, such as properties of supports, finiteness of moments, continuity properties and weak convergence, with various examples constructed.
In particular, it is shown that the set of quasi-infinitely divisible distributions is dense in the set of all probability distributions with respect to weak convergence.  Further, it is proved that a distribution concentrated on the integers is quasi-infinitely divisible if and only if its characteristic function does not have zeroes, with the use of the Wiener-L\'evy theorem on absolutely convergent Fourier series.
A number of fine properties of such distributions are proved based on this fact.  A similar characterisation is not true for non-lattice  probability distributions on the line.
\end{abstract}

\section{Introduction}
The class of infinitely divisible distributions on the real line is well studied and completely characterized by the L\'evy-Khintchine formula. The aim of this paper is to obtain some results on quasi-infinitely divisible distributions, i.e. distributions whose characteristic functions allow a L\'evy-Khintchine type representation with  \lq\lq signed L\'evy measures\rq\rq~ rather than L\'evy measures. Such distributions have been considered and appeared before in various examples, in particular in connection with the problem of the factorization of distributions, by \cite{Cuppens1975,Linnik,LinnikOstrovskii} and others. Cuppens \cite{Cuppens1975} and Linnik and Ostrovski\u{i} \cite{LinnikOstrovskii} give extensive treatment of such distributions including the multidimensional case. The term \lq\lq quasi-infinitely divisible distribution\rq\rq~ for such distributions has been introduced in \cite{LiSa2011}. It should be noted that in the context of Poisson mixtures, Puri and Goldie \cite{PuriGoldie} also introduced the notion of quasi-infinitely divisible distributions, but this notion has nothing to do with the notion of quasi-infinitely divisible used in this paper.

To get the definitions right, recall that a distribution $\mu$ on $\bR$ is infinitely divisible if and only if for every $n\in \bN$ there exists a distribution $\mu_n$ on $\bR$ such that $\mu_n^{\ast n} = \mu$. The characteristic function of an infinitely divisible distribution $\mu$ can be expressed by the L\'evy-Khintchine formula. To state it, by a \emph{representation function} we mean a function $c\cl\bR \to \bR$ which is bounded, Borel measurable and satisfies
\begin{equation} \label{eq-trunc1}
\lim_{x\to 0}\,(c(x)-x)/x^2 =0.
\end{equation}
In this paper $c$ always denotes a representation function.
Then the L\'evy-Khintchine formula states that, when we fix a representation function $c$, a probability measure $\mu$ on $\bR$ is infinitely divisible if and only if its characteristic function $z \mapsto \widehat{\mu} (z) = \int_{\bR} \re^{\ri z x} \, \mu(\di x)$ can be expressed in the form
\begin{equation} \label{eq-trunc2}
\widehat{\mu} (z) = \exp \left( \Psi_\mu(z)\right) , \quad z\in \bR,
\end{equation}
where
\begin{equation} \label{eq-trunc3}
\Psi_\mu(z) = \ri \gamma z -\frac12 a z^2 + \int_\bR \left( \re^{\ri z x} - 1 - \ri z c(x) \right) \, \nu(\di x), \quad z\in \bR,
\end{equation}
with $a\geq 0$, $\gamma \in \bR$ and $\nu$ being a measure on $\R$ satisfying
\begin{equation}\label{eq-levy}
\nu(\{0\})=0\q\text{and}\q \int_{\R} (1\land x^2)\nu(\di x)<\infty.
\end{equation}
The triplet $(a,\nu,\gamma)$ is unique and called the \emph{characteristic triplet with respect to $c$}, while the function $\Psi_\mu$ is called the \emph{characteristic exponent} of $\mu$ and is the unique continuous function satisfying $\Psi_\mu(0) = 0$ and \eqref{eq-trunc2}.
The measure $\nu$ is called the \emph{L\'evy measure} of $\mu$ and the constant $a$ the \emph{Gaussian variance} of $\mu$; these two are
independent of the choice of $c$.  The constant $\gm$ depends on the choice of $c$ and thus $\gm$ is called \emph{$c$-location} of $\mu$.
More precisely, if $c_1$ and $c_2$ are two representation functions and $\gm_j$ is the $c_j$-location of $\mu$ for $j=1,2$,
then
\begin{equation} \label{eq-location}
\gamma_2 = \gamma_1 + \int_{\R} \big( c_2(x) - c_1(x) \big) \nu(\di x).
\end{equation}
Conversely, given $a\geq 0$, $\gamma \in \bR$, a measure $\nu$ on $\bR$ satisfying \eqref{eq-levy}
and a representation function $c$, the function $x\mapsto |\re^{\ri z x} - 1 - \ri z c(x)|$ is integrable with respect to $\nu$ for each $z\in \bR$ and the right-hand side of \eqref{eq-trunc2} together with \eqref{eq-trunc3} defines the characteristic function of an infinitely divisible distribution. The function $c$ is often chosen as
$c(x) = x \one_{[-1,1]} (x)$. All these facts are well known and can be found in Sections 7, 8 and 56 of Sato \cite{Sa}, for example. When working in one dimension as we do here, it is often more convenient to combine $\nu$ and $a$ into a single measure. More precisely, let $c$ be a representation function
and define the function
\begin{equation} \label{eq-def-g}
g_{c}\cl \bR \times \bR \to \bC \quad \mbox{by}  \quad g_c(x,z) = \begin{cases}
(\re^{\ri z x} - 1 - \ri z c(x))/(1\wedge x^2), & x \neq 0, \\
- z^2/2, & x=0 .
\end{cases}
\end{equation}
Observe that $g_c(\cdot , z)$ is  bounded for each fixed $z\in \R$, and it is continuous at $0$, which follows from \eqref{eq-trunc1}.
Now, if $\mu$ is infinitely divisible with characteristic triplet $(a,\nu,\gamma)$ with respect to $c$, then $\widehat{\mu}$ has the representation
\begin{equation} \label{eq-trunc5}
\widehat{\mu} (z) = \exp \left( \ri \gamma z + \int_\bR g_c(x,z) \, \zeta(\di x) \right), \quad z\in \bR,
\end{equation}
where the measure $\zeta$ on $\bR$ is finite and given by
\begin{equation} \label{eq-trunc6}
\zeta (\di x) = a \delta_0 (\di x)+ (1 \wedge x^2) \, \nu(\di x),
\end{equation}
with $\delta_0$ denoting the Dirac measure at $0$. Conversely, to any finite measure $\zeta$ on $\bR$ we can define
$a$ and $\nu$ by $a= \zeta (\{0\})$ and $\nu(\di x) = (1\wedge x^2)^{-1} \one_{\bR \setminus \{0\}} (x) \zeta(\di x)$. We shall hence speak of $(\zeta,\gamma)$ as the \emph{characteristic pair} of $\mu$ with respect to $c$. The characteristic pair is obviously unique for given $c$ and $\zt$ is independent of the choice of $c$. With these preparations, we can now define quasi-infinitely divisible distributions:

\begin{defn} \label{def1-qid}
Let $c$ be a fixed representation function.
A distribution $\mu$ on $\bR$ is \emph{quasi-infinitely divisible}, if its characteristic function admits the representation \eqref{eq-trunc5} with some $\gamma \in \bR$ and a finite \emph{signed} measure $\zeta$ on $\bR$. The pair $(\zeta,\gamma)$ is then called the \emph{characteristic pair} of $\mu$ with respect to $c$, and
$\Psi_\mu$, defined by $\Psi_\mu (z) = \ri \gamma z + \int_\bR g_c(x,z) \, \zeta(\di x)$, satisfies \eqref{eq-trunc2} and is called
the \emph{characteristic exponent} of $\mu$.
\end{defn}

Recall that a \emph{signed measure} $\zeta$ on $\bR$ is a function $\zeta\cl \cB \to [-\infty,\infty]$ on the Borel $\sigma$-algebra $\cB$ such that $\zeta(\emptyset) = 0$ and $\zeta(\bigcup_{j=1}^\infty A_j) = \sum_{j=1}^\infty \zeta(A_j)$ for all sequences $(A_j)_{j\in\bN}$ of pairwise disjoint sets in $\cB$, where the infinite series converges in $[-\infty,\infty]$; in particular, the value of the series  does not depend on the order of the $A_j$, i.e. the series converges unconditionally. A signed measure $\zeta$ is \emph{finite}, if $\zeta(A) \in \bR$ for all $A\in \cB$. Similarly to infinitely divisible distributions, the characteristic exponent $\Psi_\mu$ of $\mu$ is the unique continuous function satisfying $\Psi_\mu(0) = 0$ and  \eqref{eq-trunc2}, and the characteristic pair of a quasi-infinitely divisible distribution is unique for a fixed function $c$, see e.g.\ Linnik \cite[Thm.~6.1.1]{Linnik}, Cuppens \cite[Thm.~4.3.3]{Cuppens1975} or Sato \cite[Exercise 12.2]{Sa}; further, it is easy to see that if $c_1$ and $c_2$ are two representation functions and $(\zeta_1,\gamma_1)$ and $(\zeta_2,\gamma_2)$ are the characteristic pairs with respect to $c_1$ and $c_2$, respectively, then $\zeta_1 = \zeta_2$ and
$$
\gm_2=\gm_1 +\int_{\R\setminus\{0\}} (1\land x^2)^{-1} (c_2(x)-c_1(x)) \zt_1(\di x).
$$
It is clear that not every pair $(\zeta,\gamma)$ with $\zeta$ being a finite signed measure which is not  positive gives rise to a quasi-infinitely divisible distribution; for otherwise, with $(\zeta,\gamma)$ being the characteristic pair of a quasi-infinitely divisible distribution $\mu$, also $(n^{-1} \zeta, n^{-1} \gamma)$ would be the characteristic pair of a quasi-infinitely divisible distribution $\mu_n$ for each $n\in \bN$, and $\mu_n^{\ast n} = \mu$, showing that $\mu$ is infinitely divisible, hence $\zeta$ must be positive  by the uniqueness of the characteristic pair, which is absurd.
The question which pair $(\zeta,\gamma)$ gives rise to a distribution is a difficult one, and a very related question (when the associated quasi-L\'evy type measure is finite) was already posed  by Cuppens \cite[Section 5]{Cuppens1969}. We do not provide an answer to this question, but will give some examples of quasi-infinitely divisible distributions and also study properties of the distribution in terms of the characteristic pair.

Quasi-infinitely divisible distributions arise naturally in the study of factorization of probability distribution. To see that, observe that the difference of two finite measures is a finite signed measure. Recall that for a signed measure $\zeta$ on $\bR$, the \emph{total variation} of $\zeta$ is the measure $|\zeta|\cl \cB \to [0,\infty]$ defined by
\begin{equation} \label{eq-variation}
|\zeta|(A) = \sup \sum_{j=1}^\infty |\zeta(A_j)|,
\end{equation}
where the supremum is taken over all partitions $\{A_j \}$ of $A\in \cB$. The total variation $|\zeta|$ is finite if and only if $\zeta$ is finite.
Further, by the Hahn-Jordan decomposition, for a finite signed measure $\zeta$, there exist disjoint Borel sets $C^+$ and $C^-$ and finite   measures $\zeta^+$ and $\zeta^-$ on $\cB$ with $\zeta^+(\bR \setminus C^+) = \zeta^-(\bR \setminus C^-) = 0$ and $\zeta = \zeta^+ - \zeta^-$, and the measures $\zeta^+$ and $\zeta^-$ are uniquely determined by $\zeta$. It holds
\begin{equation} \label{eq-Hahn}
\zeta^+ = \frac12 (|\zeta| + \zeta), \quad \zeta^- = \frac12 ( |\zeta| - \zeta), \quad |\zeta| = \zeta^+ + \zeta^-.
\end{equation}
Now if $\mu$ is quasi-infinitely divisible with characteristic pair $(\zeta,\gamma)$ with respect to a function $c$, define the infinitely divisible distributions $\mu^+$ and $\mu^-$ to have characteristic pairs $(\zeta^+,\gamma)$ and $(\zeta^-, 0)$, respectively. Since $\zeta + \zeta^- = \zeta^+$, it follows that $\Psi_{\mu^+} (z) = \Psi_{\mu}(z) + \Psi_{\mu^-}(z)$, i.e. $\widehat{\mu^+}(z) = \widehat{\mu}(z) \widehat{\mu^-}(z)$. So if $\mu$ is quasi-infinitely divisible, there exist two infinitely divisible distributions $\mu_1$ and $\mu_2$ such that $\widehat{\mu_1}(z) = \widehat{\mu_2}(z) \widehat{\mu}(z)$, i.e. such that $\mu$ and $\mu_2$ factorize $\mu_1$. On the other hand, if a distribution $\mu$ is such that two infinitely divisible distributions $\mu_1$ and $\mu_2$ with characteristic pairs $(\zeta_1,\gamma_1)$ and $(\zeta_2,\gamma_2)$ exist with $\widehat{\mu_1}(z) = \widehat{\mu_2}(z) \widehat{\mu}(z)$, then $\widehat{\mu_2}(z) \neq 0$ for all $z\in \bR$ and
$$\widehat{\mu}(z) = \frac{\widehat{\mu_1}(z)}{\widehat{\mu_2}(z)} = \exp \left( \ri (\gamma_1 - \gamma_2)  z + \int_{\bR} g_c(x,z) \, (\zeta_1 - \zeta_2)(\di x) \right),\quad z\in \bR,$$
showing that $\mu$ is quasi-infinitely divisible with characteristic pair $(\zeta_1-\zeta_2, \gamma_1 - \gamma_2)$. Summing up, a distribution $\mu$ is quasi-infinitely divisible if and only if there exist two infinitely divisible distributions $\mu_1$ and $\mu_2$ such that $\mu_2$ and $\mu$ factorize $\mu_1$, i.e. such that $\widehat{\mu_1}(z) = \widehat{\mu}(z) \widehat{\mu_2}(z)$. In terms of random variables, $\mu$ is quasi-infinitely divisible if and only if there exist random variables $X,Y,Z$  such that
\begin{equation} \label{eq-factor1}
 X+Y \stackrel{d}{=} Z,\quad \mbox{$X$ and $Y$ independent},
\end{equation}
and such that $\law(X)=\mu$ and $\law(Y)$ and $\law(Z)$ are infinitely divisible. The random variables $Y$ and $Z$ can then be chosen to have characteristic pairs $(\zeta^-,0)$ and $(\zeta^+,\gamma)$, respectively, if $(\zeta,\gamma)$ is the characteristic pair of $\mu$. This factorization property explains the interest in quasi-infinitely divisible distributions.

Apart from the decomposition problem of probability measures, quasi-infinitely divisible distributions appear
in the study of several problems in probability theory.  Some of them are mentioned with references in
Lindner and Sato \cite{LiSa2011} and in the solution of Exercise 12.4 of \cite{Sa}.  In relation to stochastic
processes, the stationary distribution of a generalized Ornstein-Uhlenbeck process associated with a
bivariate L\'evy process with three parameters can be infinitely divisible, non-infinitely divisible
quasi-infinitely divisible, or non-quasi-infinitely divisible, which is thoroughly analysed in \cite{LiSa2011}.

The goal of this paper is to study properties of quasi-infinitely divisible distributions in terms of their characteristic pairs, or, equivalently, in terms of their characteristic triplets. The quasi-L\'evy measure and characteristic triplet will be introduced in the next section, along with some preliminary remarks about quasi-infinitely divisible distributions. Section \ref{S3} contains some examples of quasi-infinitely divisible distributions. In Section \ref{S4} we study convergence properties of a sequence of quasi-infinitely divisible distributions in terms of the characteristic pairs. Sections \ref{S5}, \ref{S6} and \ref{S7} are concerned with the supports, moments and continuity properties of quasi-infinitely divisible distributions, respectively. Finally, in Section \ref{S-integers} we specialise in  distributions concentrated on the integers, show that such a distribution is quasi-infinitely divisible if and only if its characteristic function has no zeroes, and derive sharper convergence and moment conditions for quasi-infinitely divisible distributions concentrated on the integers.

To fix notation (which partially has been already used), by a distribution on $\bR$ we mean a probability measure on $(\bR,\cB)$, with $\cB$ being the Borel $\sigma$-algebra on $\bR$, and similarly, by a signed measure on $\bR$ we mean it to be defined on $(\bR,\cB)$. By a measure on $\bR$ we always mean a positive measure on $(\bR,\cB)$, i.e. an $[0,\infty]$-valued $\sigma$-additive set-function on $\cB$ that assigns the value 0 to the empty set.  The Dirac measure at a point $b\in \bR$ will be denoted by $\delta_b$, the Gaussian distribution with mean $a\in \bR$ and variance $b\geq 0$ by $N(a,b)$. The support of a signed measure $\mu$ on $\bR$ is defined to be the support of its total variation $|\mu|$ and will be denoted by $\mbox{\rm supp} (\mu)$, the restriction of $\mu$ to a subset $\cA \subset \cB$ by $\mu_{|\cA}$, and for $A\in \cB$ we often write $\mu_{|A}$ for $\mu_{|A\cap \cB}$. Weak convergence of signed measures (as defined in Section \ref{S4}) will be denoted by \lq\lq $\stackrel{w}{\to}$\rq\rq, and the Fourier transform at $z\in \bR$ of a finite signed (or complex) measure $\mu$ on $\bR$ by $\widehat{\mu}(z) = \int_\bR \re^{\ri z x} \, \mu(\di x)$.  By $\bL_\mu (u) = \int_\bR \re^{-ux} \, \mu(\di x)\in [0,\infty]$  we denote the Laplace transform of a distribution $\mu$ on $\bR$ at $u\geq 0$, irrelevant if $\mu$ is concentrated on $[0,\infty)$ or not.
We say the Laplace transform is finite, if $\bL_\mu(u) < \infty$ for all $u\geq 0$, which is in particular the case when the support of $\mu$ is bounded from below.
The convolution of two finite signed  (or complex) measures $\mu_1$ and $\mu_2$ on $\bR$ is defined by $\mu_1 \ast \mu_2( B) = \int_\bR \mu_1( B-x) \, \mu_2(\di x)$, $B\in \cB$, where $B-x = \{ y-x: y\in B\}$, and the $n$-fold convolution of $\mu_1$ with itself is denoted by $\mu_1^{\ast n}$. See \cite[Sect. 2.5]{Cuppens1975} or Rudin \cite[Exercise 8.5]{Rudin} for more information on the convolution of finite signed or complex measures.
The law of a random variable $X$ will be denoted by $\law(X)$, and equality in distribution will be written as $X\stackrel{d}{=} Y$. The expectation of a random variable $X$ is denoted by $\bE X$, its variance by $\mbox{\rm Var} (X)$. We write $x\wedge y = \min \{x, y\}$ and $x\vee y = \max \{x,y\}$ for $x,y\in \bR$. The real and imaginary part of a complex number $w$ will be denoted by $\Re (w)$ and $\Im (w)$, respectively, and by $\ri$ we denote the imaginary unit.
We write $\bN = \{1,2,\ldots\}$, $\bN_0 = \bN \cup \{0\}$ and $\bZ$, $\bQ$, $\bR$ and $\bC$ for the set of integers, rational numbers, real numbers and complex numbers, respectively.
 The indicator function of a set $A\subset \bR$ is denoted by $\one_A$.


\section{Quasi-L\'evy measures and first remarks} \label{S2}

Our first goal is to define quasi-L\'evy measures of quasi-infinitely divisible distributions.
We can basically view them as the difference of the L\'evy measures $\nu_1$ and $\nu_2$ of two infinitely divisible
distributions $\mu_1$ and $\mu_2$. However, the difference is not a signed measure if $\nu_1$ and $\nu_2$ are infinite; on the other hand, when $\nu_1$ and $\nu_2$ are restricted to $\bR \setminus (-r,r)$ for some $r>0$, then the difference is a finite signed measure. Hence we can formalise the following definition.

\begin{defn} \label{def-LM1}
Let $\cB_r := \{ B \in \cB: B \cap (-r,r) = \emptyset\}$ for $r>0$ and
$\cB_0 = \bigcup_{r>0} \cB_r$  be the class of all Borel sets that are bounded away from zero. Let $\nu : \cB_0 \to \bR$ be a function such that $\nu_{|\cB_r}$ is a finite signed measure for each $r>0$, and denote the total variation, positive and negative part of $\nu_{|\cB_r}$ by $|\nu_{|\cB_r}|$, $\nu^+_{|\cB_r}$ and $\nu^-_{|\cB_r}$, respectively. Then the  \emph{total variation} $|\nu|$, the \emph{positive part $\nu^+$} and the \emph{negative part $\nu^-$} of $\nu$ are defined to be the unique measures on $(\bR,\cB)$ satisfying
$$|\nu|(\{0\}) = \nu^+ (\{0\})= \nu^-(\{0\}) = 0$$
and $$|\nu|(A) = |\nu_{|\cB_r}|(A), \;\nu^{+}(A) = (\nu_{|\cB_r})^{+}(A), \; \nu^{-}(A) = (\nu_{|\cB_r})^{-}(A)$$
for $A\in \cB_r$ for some $r>0$.
\end{defn}

Observe that when $\nu\cl\cB_0 \to \bR$ is such that $\nu_{|\cB_r}$ is a finite signed measure for each $r>0$, then $|\nu_{|\cB_r}|(A) = |\nu_{|\cB_s}|(A)$ for all $A \in \cB_r$ with $0<s\leq r$ and similarly for the positive and negative parts, so that $|\nu|$, $\nu^+$ and $\nu^-$ are well-defined and it is easy to see that these measures on $(\bR,\cB)$ indeed exist and are necessarily unique.   Observe that $\nu$ itself is defined on $\cB_0$, which is not a $\sg$-algebra, hence $\nu$ is not a signed measure. It is not always possible to extend the definition of $\nu$ to $\cB$ such that $\nu$ will be a signed measure. However, whenever it is possible, we will identify $\nu$ with its extension to $\cB$ and speak of $\nu$ as a signed measure. Then $\nu(\{0\}) = 0$ and the total variation, positive and negative parts of $\nu$ as defined in Definition \ref{def-LM1} coincide with the corresponding notions from \eqref{eq-variation} and \eqref{eq-Hahn} for the signed measure $\nu$.

We can now define quasi-L\'evy measures and quasi-L\'evy type measures:

\begin{defn} \label{def-LM2}
(a) A \emph{quasi-L\'evy type measure} is a function $\nu:\cB_0 \to \bR$ satisfying the condition in Definition \ref{def-LM1} such that its total variation $|\nu|$ satisfies $\int_\bR (1 \wedge x^2) \, |\nu|(\di x) < \infty$.\\
(b)
Suppose that $\mu$ is a quasi-infinitely divisible distribution on
$\R$ with characteristic pair $(\zt,\gm)$ with respect to a representation function $c$.  Then $\nu\cl \mcal B_0 \to \R$
defined by
\begin{equation}\label{eq-qLm}
\nu(B) = \int_B (1\land x^2)^{-1} \zt(\di x),\qq B\in\mcal B_0
\end{equation}
is called the {\it quasi-L\'evy measure} of $\mu$.
\end{defn}

For the quasi-L\'evy measure $\nu$ of a quasi-infinitely divisible
distribution $\mu$, we have $\int_{\R} (1\land x^2) |\nu| (\di x)<\infty$, where $|\nu|$ is the total
variation of $\nu$.
Hence every quasi-L\'evy measure of some distribution is also a quasi-L\'evy type measure, but the converse is not true, as will be seen in Example \ref{ex-I}. Observe that the notion \lq\lq quasi-L\'evy measure\rq\rq~ is used only when a quasi-infinitely divisible distribution is described, while the notion \lq\lq quasi-L\'evy type measure\rq\rq~ is not necessarily related to a distribution.

We say that a function $f\cl \bR\to \bR$ is \emph{integrable with respect to a quasi-L\'evy type measure $\nu$}, if it is integrable with respect to $|\nu|$ (hence also with respect to $\nu^+$ and $\nu^-$), and we then define
$$
\int_B f(x) \, \nu(\di x) := \int_B f(x) \, \nu^+(\di x) - \int_B f(x) \, \nu^-(\di x),\qq B\in\mcal B,
$$
although $\nu$ is not always a signed measure on $\R$.
For a representation function $c$, the function $x\mapsto \re^{\ri z x} - 1 - \ri z c(x)$ is integrable with respect to $\nu$.
Now we can speak of characteristic triplets:

\begin{defn} \label{def-LM3}
Let $\mu$ be a quasi-infinitely divisible distribution with characteristic pair $(\zeta,\gamma)$ with respect to
$c$. Then $(a,\nu,\gamma)$, where $a:= \zeta(\{0\})$ and $\nu$ is the quasi-L\'evy measure of $\mu$ defined by Definition \ref{def-LM2}~(b),
is called the \emph{characteristic triplet} of $\mu$ with respect to $c$. It is necessarily unique and $\zt$ is uniquely restored from $a$ and $\nu$.
We write $\mu \sim \qid (\zeta,\gamma)_c$ and $\mu \sim \qid (a,\nu,\gamma)_c$ to indicate that $\mu$ is quasi-infinitely divisible with given characteristic pair or triplet. The constant $a$ is called the \emph{Gaussian variance} of $\mu$.
\end{defn}

Notice that
\begin{equation}\label{eq-zeta}
\zt(B) = a\dl_0(B)+\int_B (1\land x^2) \nu(\di x),\qq B\in\mcal B.
\end{equation}
The characteristic function of a quasi-infinitely divisible distribution $\mu$ satisfies \eqref{eq-trunc2}
where  the characteristic exponent $\Psi_\mu$ of $\mu$ is given by \eqref{eq-trunc3}
with $a,\gamma\in \bR$ and $\nu$ being the quasi-L\'evy measure of $\mu$. The characteristic function of
a quasi-infinitely divisible distribution obviously cannot have zeroes.

\begin{rem}
As is explained in Section 1, $\mu$ is a quasi-infinitely divisible distribution on $\R$ if and only if
there are two infinitely divisible distributions $\mu_1$, $\mu_2$ such that $\wh\mu(z)=\wh\mu_1(z) / \wh\mu_2 (z)$.
We can define quasi-infinitely divisible distributions on $\R^d$ by this property.
Alternatively, Definition \ref{def-LM1} can be extended to $\R^d$ word by word with $\mcal B$ defined as
the class of all Borel sets in $\R^d$ and $\mcal B_r$ as the class $\{ B\in\mcal B\cl B\cap \{x\cl |x|<r\}
=\emptyset \}$.
A \emph{quasi-infinitely divisible distribution on $\bR^d$} can then be defined as a distribution $\mu$ on $\bR^d$ whose characteristic function $\widehat{\mu}(z) = \int_{\bR^d} \re^{\ri \langle z, x\rangle} \, \mu(\di x)$ admits a representation
$$\widehat{\mu}(z) = \exp \left(  \ri \langle \gamma, z \rangle - \frac12 \langle Az , z\rangle + \int_{\bR^d} \left(
\re^{\ri \langle z,x\rangle } - 1 - \ri \langle z, c(x)\rangle \right) \, \nu(\di x) \right), \quad z \in \bR^d,$$
for a fixed representation function $c$, where $\gamma \in \bR^d$, $A$ is a symmetric $d\times d$-matrix, and $\nu$ is a function $\mcal B_0 \to \R$ such that
$\nu_{|\cB_r}$ is a finite signed measure for each $r>0$ and $\int_{\R^d} (1\land |x|^2) |\nu|(\di x)<\infty$.
Here, $\langle z, x \rangle$ denotes the standard inner product of $z,x\in\bR^d$, and by a representation function we mean a bounded,
Borel measurable function $c\cl \bR^d \to \bR^d$ such that $|x|^{-2} |c(x)-x|\to 0$ as $x\to 0$ in $\R^d$.
It is possible to show that $(A,\nu,\gamma)$ is unique (cf. Sato \cite[Exercise 12.2]{Sa}) and can hence be called the characteristic triplet of $\mu$. In this paper, mainly for simplicity, we shall restrict ourselves to the one-dimensional case.
\end{rem}

For the expression of the characteristic functions of quasi-infinitely divisible distributions, it is possible to
replace representation functions by other functions as long as the corresponding integral is defined.
This is similar to the case of infinitely divisible distributions.  Particular important replacement is by $0$ or $x$:

\begin{rem} \label{rem-drift}
Let $\mu \sim \qid (a,\nu,\gamma)_c$ for some $c$, where $\nu$ is such that $\int_{|x|<1} |x| \, |\nu|(\di x) < \infty$.
Then $\re^{\ri z x} - 1$ is integrable with respect to $\nu$, and $\widehat{\mu}$ can be represented as
$$\widehat{\mu}(z) = \exp \left( \ri \gamma_0 z - a z^2/2 +  \int_\bR (\re^{\ri z x} - 1 ) \, \nu(\di x) \right)$$
for some $\gamma_0 \in \bR$; more precisely, $\gamma_0 = \gamma - \int_\bR c(x) \, \nu(\di x)$. This representation is unique and $\gamma_0$ is called the \emph{drift} of $\mu$.
We also write $\mu\sim \qid (a,\nu,\gamma_0)_0$ or $\mu \sim \qid (\zeta,\gamma_0)_0$ to indicate that $\int_{|x|<1} |x| \, |\nu|(\di x) < \infty$ and that $\mu$ has drift $\gm_0$.\\
Similarly, if $\nu$ is such that $\int_{|x|>1}  |x| \, |\nu|(\di x) < \infty$, then $\re^{\ri z x} - 1 - \ri z x$ is integrable with respect to $\nu$, and $\widehat{\mu}$ can be represented as
$$\widehat{\mu}(z) = \exp \left( \ri \gamma_m z - a z^2/2 + \int_\bR (\re^{\ri z x} - 1 - \ri z x) \, \nu(\di x) \right)$$
for some $\gamma_m \in \bR$. The representation is unique and $\gamma_m$ is called the \emph{center} of $\mu$ and related to $\gamma$ by $\gamma_m = \gamma + \int_{\bR}  (x-c(x)) \nu(\di x)$. We shall see in Theorem \ref{thm-moments1} that the center of a quasi-infinitely divisible distribution is equal to its mean.
\end{rem}

\begin{rem} \label{rem-conv}
(a) The class of quasi-infinitely divisible distributions is closed under convolution. More precisely, if $\mu_1 \sim \qid (a_1,\nu_1,\gamma_1)_c \sim \qid (\zeta_1,\gamma_1)_c$ and $\mu_2 \sim \qid (a_2,\nu_2,\gamma_2)_c \sim \qid (\zeta_2,\gamma_2)_c$, then $\mu_1\ast \mu_2 \sim \qid (a_1+a_2, \nu_1+\nu_2,\gamma_1+\gamma_2) \sim \qid (\zeta_1+\zeta_2,\gamma_1+\gamma_2)_c$. Similarly, the drift or center of convolutions is the sum of the individual drifts or centers, provided they exist.\\
(b) The class of quasi-infinitely divisible distributions is also closed under shifts and dilation, i.e. if $\mu= \law(X)$ for some random variable $X$ is quasi-infinitely divisible, then also $\law(mX+b)$ is quasi-infinitely divisible for $m,b\in \bR$ with $m\neq 0$. More precisely, if $\law(X) \sim \qid (a,\nu,\gamma)_c$, then $\law(mX+b) \sim \qid (a m^2, \overline{\nu}, b+m\gamma + \int_\bR (c(mx) - m c(x)) \, \nu(\di x))_c$ with $\overline{\nu}(B) := \nu(m^{-1} B)$, $B\in \cB$, as can be easily seen by considering the characteristic function of $mX+b$. Similarly, if $\law(X)$ has finite drift $\gm_0$ or center $\gamma_m$, then also $mX+b$ has finite drift given by $m\gm_0 +b$, or center given by $m\gamma_m + b$, respectively.\\
(c) We have already seen that not every pair $(\zeta,\gamma)_c$ with $\zeta$ being a finite signed measure
gives rise to a quasi-infinitely divisible distribution via \eqref{eq-trunc5}.  Similarly, not every triplet $(a,\nu,\gamma)_c$ with
$\nu$ being a quasi-L\'evy type measure
gives rise to a quasi-infinitely divisible distribution via \eqref{eq-trunc3}.
Of course $\gm$ is irrelevant to this property, which follows from (b).
We can say that, if $(\zeta,\gamma)_c$ is the characteristic pair of a quasi-infinitely divisible distribution $\mu$, then so is $(\zeta',\gamma')_c$ for some distribution $\mu'$ whenever $\gamma'\in \bR$ and $\zeta'\geq \zeta$ in the sense that $\zeta'(B) \geq \zeta(B)$ for all $B\in \cB$; similarly, if $(a,\nu,\gamma)_c$ is the characteristic triplet of a quasi-infinitely divisible distribution $\mu$, then so is $(a',\nu',\gamma')_c$  for $\mu'$ whenever $\gamma'\in \bR$, $a'\geq a$ and $\nu'\geq \nu$ in the sense that $\nu'(B) \geq \nu(B)$ for all $B\in \cB_0$. This is seen by letting $\mu''$ be
an infinitely divisible distribution with characteristic pair $(\zeta'-\zeta,\gamma'-\gamma)_c$, or characteristic triplet $(a'-a,\nu'-\nu,\gamma'-\gamma)_c$, respectively, and observing that $\mu' = \mu \ast \mu''$.
\end{rem}

We allowed also negative Gaussian variances $a= \zeta(\{0\})$ in the definition of quasi-infinitely divisible distributions. The next lemma shows that necessarily $a\geq 0$.

\begin{lem} \label{lem-a}
Let $\mu \sim \qid (a,\nu,\gamma)_c \sim \qid (\zeta,\gamma)_c$ for some $c$. Then
$$a = \zeta(\{0\}) = -2 \lim_{|z|\to \infty} z^{-2} \Psi_\mu(z).$$
In particular, $a\geq 0$.
\end{lem}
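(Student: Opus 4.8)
The plan is to compute the limit $\lim_{|z|\to\infty} z^{-2}\Psi_\mu(z)$ directly from the integral representation \eqref{eq-trunc3} and show that only the Gaussian term survives in the limit, while the linear term and the integral term are both $o(z^2)$. Writing $\Psi_\mu(z) = \ri\gamma z - \frac12 a z^2 + \int_\bR (\re^{\ri z x} - 1 - \ri z c(x))\,\nu(\di x)$, we clearly have $z^{-2}(\ri\gamma z) \to 0$, so the whole point is to prove
$$
\lim_{|z|\to\infty} z^{-2} \int_\bR \big( \re^{\ri z x} - 1 - \ri z c(x) \big) \, \nu(\di x) = 0,
$$
where the integral is understood as $\int_\bR(\cdots)\,\nu^+(\di x) - \int_\bR(\cdots)\,\nu^-(\di x)$ with $\nu^\pm$ the positive and negative parts of the quasi-L\'evy measure. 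Since both $\nu^+$ and $\nu^-$ are genuine (positive) measures with $\int_\bR (1\wedge x^2)\,\nu^\pm(\di x) \le \int_\bR(1\wedge x^2)\,|\nu|(\di x) < \infty$, it suffices to prove the statement with $\nu$ replaced by a finite L\'evy-type measure $\rho$ with $\int(1\wedge x^2)\,\rho(\di x)<\infty$; this is exactly the classical fact for infinitely divisible distributions.

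For that classical fact I would argue as follows. Fix $\ep > 0$. Split the integral over $|x| \le \dl$ and $|x| > \dl$. On $|x| > \dl$: since $\rho$ restricted to $\{|x|>\dl\}$ is a finite measure and $c$ is bounded, the integrand $\re^{\ri z x} - 1 - \ri z c(x)$ is bounded in modulus by $2 + |z|\|c\|_\infty$, so after dividing by $z^2$ this contributes $O(|z|^{-1}) \to 0$. On $|x| \le \dl$: here I use the elementary bound $|\re^{\ri z x} - 1 - \ri z x| \le \frac12 z^2 x^2$, together with $|c(x) - x| \le \eta(x) x^2$ where $\eta(x)\to 0$ as $x\to 0$ (this is \eqref{eq-trunc1}), to get $|\re^{\ri z x} - 1 - \ri z c(x)| \le \frac12 z^2 x^2 + |z|\,|x|\,\eta(x)|x| \le \frac12 z^2 x^2 + \frac12 z^2 x^2\eta(x)^2 + \frac12\eta(x)^2$ — or more cleanly, for $\dl$ small enough that $\eta(x) \le 1$ on $|x|\le\dl$, one gets $|\re^{\ri z x}-1-\ri z c(x)| \le C z^2 x^2$ on $|x|\le\dl$ for a universal constant $C$. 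Hence $z^{-2}$ times this part is bounded by $C\int_{|x|\le\dl} x^2\,\rho(\di x)$, which can be made $<\ep$ by choosing $\dl$ small (using $\int(1\wedge x^2)\,\rho(\di x)<\infty$ and dominated convergence). Combining, $\limsup_{|z|\to\infty}|z^{-2}\int(\cdots)\,\rho(\di x)| \le C\ep$, and since $\ep$ was arbitrary the limit is $0$.

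Putting the pieces together: $z^{-2}\Psi_\mu(z) \to -\frac12 a$, i.e. $a = -2\lim_{|z|\to\infty} z^{-2}\Psi_\mu(z)$, and since the left-hand side is also $\zeta(\{0\})$ by Definition \ref{def-LM3}, the claimed chain of equalities holds. Finally, $a \ge 0$: $\Psi_\mu(z) = \log\wh\mu(z)$ (the continuous branch with $\Psi_\mu(0)=0$), and $|\wh\mu(z)| \le 1$ for all $z$ because $\wh\mu$ is the characteristic function of a probability measure, so $\Real\Psi_\mu(z) = \log|\wh\mu(z)| \le 0$; dividing by $-z^2 < 0$ and letting $|z|\to\infty$ gives $\frac12 a = \lim_{|z|\to\infty}(-z^{-2})\Real\Psi_\mu(z) \ge 0$. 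I do not expect a genuine obstacle here; the only point requiring a little care is justifying the splitting of the integral against the signed $\nu$ via its Hahn--Jordan parts $\nu^\pm$ and making sure the bound $|c(x)-x|\le \eta(x)x^2$ with $\eta\to 0$ is used correctly to absorb the linear-in-$z$ cross term into the $z^2 x^2$ estimate near the origin.
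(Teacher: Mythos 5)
Your proof is correct and follows the same structure as the paper's: decompose the quasi-L\'evy measure into $\nu^+$ and $\nu^-$, show that $z^{-2}\int_\bR(\re^{\ri zx}-1-\ri z c(x))\,\nu^\pm(\di x)\to 0$, and conclude $z^{-2}\Psi_\mu(z)\to -a/2$. The only real difference is that the paper simply cites Sato's Lemma 43.11 for that limit, whereas you reprove it from scratch by splitting at $|x|\le\dl$ versus $|x|>\dl$; your elementary argument is fine (just note that the clean bound $|\re^{\ri zx}-1-\ri z c(x)|\le C z^2x^2$ near $0$ requires restricting to, say, $|z|\ge 1$ so that the cross term $|z|\eta(x)x^2$ is dominated by $z^2x^2$, which is harmless since only $|z|\to\infty$ matters). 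For the final step the paper argues by contradiction (if $a<0$ then $|\wh\mu(z)|\to\infty$), while you observe directly that $\Real\Psi_\mu=\log|\wh\mu|\le 0$ forces the limit to be nonnegative; these are equivalent, and yours is if anything slightly more direct.
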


\begin{proof}
We have
\begin{eqnarray*}
& & \lim_{|z|\to \infty} z^{-2} \int_\bR \left( \re^{\ri z x} - 1 - \ri z c(x) \right) \, \nu(\di x) \\
& = & \lim_{|z|\to \infty} z^{-2} \left(  \int_\bR \left( \re^{\ri z x} - 1 - \ri z c(x) \right) \nu^+(\di x) -
 \int_\bR \left( \re^{\ri z x} - 1 - \ri z c(x) \right) \nu^-(\di x) \right)= 0
\end{eqnarray*} by Sato \cite[Lem.\ 43.11]{Sa}. Hence $\lim_{|z|\to \infty} z^{-2} \Psi_\mu(z) = -a/2$. Now if $a$ were strictly negative,
then $|\widehat{\mu}(z)| = |\exp (\Psi_\mu(z))|$ would tend to $\infty$ as $|z| \to \infty$, which is clearly impossible for a characteristic function. Hence $a\geq 0$.
\end{proof}

As seen in \eqref{eq-measure-estim3} below, if the Gaussian variance in a quasi-infinitely divisible distribution is zero, then the positive part of the quasi-L\'evy measure must
have at least as much mass as the negative part. More generally, we have:

\begin{lem}\label{lem-de2}
Let $\mu \sim \qid (a,\nu,\gamma)_c$ for some $c$.  Let $\sigma$ be an arbitrary probability distribution on $\bR$. Then
\begin{align}
\frac{a}{2} z^2 +\int_{\bR}(1-\cos zx)\nu^+(\di x) & \geq \int_{\bR}(1-\cos zx)\nu^-(\di x), \quad  \forall z\in \bR, \label{eq-measure-estim1}\\
\frac{a}{2}\int_{\bR} z^2 \sg(\di z)+\int_\bR (1 - \Re \widehat{\sigma}(x)) \, \nu^+ (\di x) & \geq \int_\bR (1 - \Re \widehat{\sigma}(x))\, \nu^-( \di x) , \quad \mbox{and} \label{eq-measure-estim2}\\
a + \int_\bR \frac{x^2}{1+x^2} \, \nu^+(\di x) & \geq \int_\bR \frac{x^2}{1+x^2} \, \nu^-(\di x). \label{eq-measure-estim4}
\end{align}
Further, if $a=0$, then
\begin{align} \label{eq-measure-estim3}
\nu^+ (\bR) & \geq  \nu^-(\bR), \quad \mbox{and} \\
\int_{\bR} (1\land |x|) \nu^+ (\di x)  & \geq \int_{\bR} (1\land |x|) \nu^- (\di x) \label{eq-measure-estim5}
\end{align}
In particular, if $a=0$ and $\nu^+(\bR)$ is finite, then so is $\nu^-(\bR)$.
\end{lem}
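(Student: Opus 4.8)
The plan is to establish \eqref{eq-measure-estim1} first and then derive the remaining inequalities by integrating it against cleverly chosen probability distributions. For \eqref{eq-measure-estim1}, I would start from the characteristic exponent $\Psi_\mu(z) = \ri \gamma z - \frac a2 z^2 + \int_\bR (\re^{\ri z x} - 1 - \ri z c(x))\,\nu(\di x)$ and take real parts. Since $c$ is real-valued, $\Re(\re^{\ri z x} - 1 - \ri z c(x)) = \cos(zx) - 1$. The function $x\mapsto 1-\cos(zx)$ is integrable with respect to $|\nu|$ — near $0$ it is $O(z^2 x^2)$ and $\int_{|x|<1} x^2\,|\nu|(\di x)<\infty$, while on $|x|\geq 1$ it is bounded and $\int_{|x|\geq 1}|\nu|(\di x)<\infty$ — so one may legitimately split the integral into its $\nu^+$- and $\nu^-$-parts, obtaining $\Re\Psi_\mu(z) = -\frac a2 z^2 - \int_\bR (1-\cos zx)\,\nu^+(\di x) + \int_\bR (1-\cos zx)\,\nu^-(\di x)$. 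Because $|\widehat\mu(z)| = \exp(\Re\Psi_\mu(z))\leq 1$ for every $z$, we get $\Re\Psi_\mu(z)\leq 0$, which is exactly \eqref{eq-measure-estim1}.

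Next, for an arbitrary probability distribution $\sigma$ on $\bR$, integrating \eqref{eq-measure-estim1} with respect to $\sigma(\di z)$ and applying Tonelli's theorem (all integrands being nonnegative), together with $\int_\bR \cos(zx)\,\sigma(\di z) = \Re\widehat\sigma(x)$, yields \eqref{eq-measure-estim2}; if $\int_\bR z^2\,\sigma(\di z)=\infty$ the statement is read with the left-hand side equal to $+\infty$ when $a>0$, while for $a=0$ the term $\frac a2\int z^2\,\sigma(\di z)$ is simply absent. To obtain \eqref{eq-measure-estim4}, I would apply \eqref{eq-measure-estim2} with $\sigma$ the Laplace (two-sided exponential) distribution with density $z\mapsto \frac12\re^{-|z|}$, for which $\widehat\sigma(x) = (1+x^2)^{-1}$ is real and $\int_\bR z^2\,\sigma(\di z)=2$; substituting gives $a + \int_\bR \frac{x^2}{1+x^2}\,\nu^+(\di x) \geq \int_\bR \frac{x^2}{1+x^2}\,\nu^-(\di x)$, which is \eqref{eq-measure-estim4}.

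For the case $a=0$, the idea is to integrate \eqref{eq-measure-estim1} (whose $z^2$-term now vanishes identically, so no indeterminate form appears) against the dilated Fej\'er distribution $\sigma_t$, the law of $tW$ where $W$ has density $z\mapsto \frac{1-\cos z}{\pi z^2}$; here $\widehat{\sigma_t}(x) = (1-t|x|)^+$, hence $1 - \Re\widehat{\sigma_t}(x) = \min(1,t|x|)$, and Tonelli gives $\int_\bR \min(1,t|x|)\,\nu^+(\di x) \geq \int_\bR \min(1,t|x|)\,\nu^-(\di x)$ for every $t>0$. Taking $t=1$ is precisely \eqref{eq-measure-estim5}. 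Letting $t\to\infty$, $\min(1,t|x|)$ increases to $\one_{\bR\setminus\{0\}}(x)$, so by monotone convergence and $\nu^\pm(\{0\})=0$ we obtain $\nu^+(\bR)\geq\nu^-(\bR)$, which is \eqref{eq-measure-estim3}; the final assertion is then immediate.

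I expect the only real difficulty to be bookkeeping: justifying the $|\nu|$-integrability of $1-\cos(zx)$ so that the split into $\nu^\pm$ is valid, the interchange of integration via Tonelli, and — for the $a=0$ inequalities — the fact that the dilated Fej\'er distribution has infinite variance, which is precisely why I would route \eqref{eq-measure-estim3} and \eqref{eq-measure-estim5} through \eqref{eq-measure-estim1} directly rather than through \eqref{eq-measure-estim2}. The one genuinely inspired step is the choice of test laws: the Laplace distribution produces the kernel $x^2/(1+x^2)$, and the Fej\'er distribution produces the kernel $1\wedge|x|$ \emph{exactly} (not merely up to a multiplicative constant), since its characteristic function $(1-|x|)^+$ satisfies $1-(1-|x|)^+ = 1\wedge|x|$.
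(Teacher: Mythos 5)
Your proposal is correct and follows essentially the same route as the paper: take real parts of the characteristic exponent and use $|\widehat\mu(z)|\leq 1$ to get \eqref{eq-measure-estim1}, integrate against a probability law $\sigma$ with Fubini/Tonelli for \eqref{eq-measure-estim2}, use the two-sided exponential for \eqref{eq-measure-estim4}, and use the Fej\'er law (whose characteristic function $(1-|x|)\vee 0$ gives the kernel $1\wedge|x|$ exactly) for \eqref{eq-measure-estim5}. The only cosmetic difference is \eqref{eq-measure-estim3}: the paper lets $t\to\infty$ in \eqref{eq-measure-estim2} with $\sigma=N(0,t)$, while you scale the Fej\'er family so that $1-\Re\widehat{\sigma_t}(x)=\min(1,t|x|)\uparrow\one_{\{x\neq 0\}}$; both are the same monotone-convergence argument with a different test family, and your care about the $a=0$ versus infinite-variance bookkeeping is sound.
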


\begin{proof}
Assertion \eqref{eq-measure-estim1} follows from  $$0  \geq \log |\widehat{\mu}(z)| = \Re (\Psi_\mu(z)) =
  -\frac{a}{2} z^2 + \int_{\bR}(\cos zx -1)\nu^+(\di x)-\int_{\bR}(\cos zx -1 )\nu^-(\di x).$$ Hence
  $$\frac{a}{2} \int_\bR z^2 \, \sigma(\di z) + \int_\bR \int_\bR (1-\cos zx) \, \nu^+(\di x) \, \sigma ( \di z) \geq
  \int_\bR \int_\bR (1-\cos zx) \, \nu^-(\di x) \, \sigma ( \di z),$$
and an application of Fubini's theorem gives \eqref{eq-measure-estim2}. Assertion \eqref{eq-measure-estim4} follows from \eqref{eq-measure-estim2} by choosing $\sigma$ as the two-sided exponential distribution $\sigma (\di x) = 2^{-1} \re^{-|x|}\, \di x$ for which $\widehat{\sigma}(x) = 1/(1+x^2)$ and $\int_\bR z^2 \, \sigma(\di z) = 2$.
Now let $a=0$ and $\sigma$ be an $N(0,t)$ distribution with $t>0$. Then $\widehat{\sigma}(x) = \re^{-tx^2/2}$, and letting $t\to \infty$ in \eqref{eq-measure-estim2} (with $\sigma = N(0,t)$) gives \eqref{eq-measure-estim3} by monotone convergence.
To see \eqref{eq-measure-estim5}, letting $\sg(\di x)=\pi^{-1} x^{-2} (1-\cos x) \di x$, we have $\wh\sg (x)= (1-|x|)\lor 0$
and $1-\Re \wh \sg (x) = 1\land |x|$; see \cite[p.503]{Feller2}.
\end{proof}

Lemma \ref{lem-de2} can be used to show that certain triplets do not lead to characteristic triplets of quasi-infinitely divisible
distributions via \eqref{eq-trunc3}. For example, $(0,\delta_1 - 2 \delta_3,\gamma)_c$ is not the characteristic triplet of a quasi-infinitely divisible distribution, since \eqref{eq-measure-estim3} is violated. Deeper results of this kind can be obtained using the class $I_0$:

\begin{ex} \label{ex-I}
\emph{The class $I_0$} consists of all infinitely divisible probability distributions $\mu$ such that each factor of $\mu$ is also infinitely divisible, i.e. such that $\mu = \mu_1 \ast \mu_2$ with probability distributions $\mu_1$ and $\mu_2$ implies infinite divisibility of $\mu_1$ and $\mu_2$; by Khintchine's theorem (e.g. \cite[Thm.~5.4.2]{Linnik} or \cite[Thm.~4.6.1]{Cuppens1975}) this is equivalent to the more common definition that a probability distribution belongs to $I_0$ if every factor of it is decomposable. Now if $\mu$ is in $I_0$ with characteristic triplet $(a,\nu,\gamma)_c$ and if $(a',\nu',\gamma')_c$ is given with $a',\gamma'\in \bR$ such that $a'\leq a$, $(\nu')^- \neq 0$ and $\nu'\leq \nu$ in the sense that $\nu'(B) \leq \nu(B)$ for all $B\in \cB_0$, then (as also noted in Cuppens \cite[Cor.~4.6.2]{Cuppens1975}) $(a',\nu',\gamma')_c$ is not the characteristic triplet of a quasi-infinitely divisible distribution. For suppose it were, and denote it by $\mu'$. Let $\mu''$ be the infinitely divisible distribution with characteristic triplet $(a-a',\nu-\nu', \gamma-\gamma')_c$. Then $\mu' \ast \mu'' = \mu$ with $\mu'$ not being infinitely divisible, contradicting $\mu \in I_0$.
Sufficient conditions for a distribution to be in $I_0$ can be found e.g.\ in Linnik \cite{Linnik} or Cuppens \cite{Cuppens1975}.
For example, Gaussian distribution, Poisson distribution, and
the convolution of a Gaussian and a Poisson are in $I_0$ (\cite[Thms.~6.3.1, 6.6.1, 7.1.1]{Linnik}).
Hence, if $\nu^- \ne 0$ and if either $\nu^+=0$ or $\supp \nu^+$ is a one-point set, then $(a,\nu,\gm)_c$ is not the characteristic triplet
of a quasi-infinitely divisible distribution for any $a$ and $\gm$; in other words, in this case $\nu$ is a quasi-L\'evy type measure but there is no distribution $\mu$ for which $\nu$ will be the quasi-L\'evy measure.
An infinitely divisible distribution with Gaussian variance 0 and L\'evy measure of the form $\nu= \sum_{k=1}^n b_k \delta_{\tau_k}$, where $0 < \tau_1 < \ldots < \tau_n$, $b_1,\ldots, b_n > 0$ and either $\tau_1,\ldots, \tau_n$ are linearly independent over $\mathbb{Q}$, or $\tau_n \leq 2 \tau_1$, belongs to $I_0$ by results of Raikov as stated in \cite[Thms.~12.3.2 and 12.3.3]{Linnik}. More generally, if an infinitely divisible distribution has Gaussian variance 0 and L\'evy measure $\nu$ such that $\supp \nu \subset (b,2b)$ for some $b>0$, then it belongs to $I_0$,
cf.\ \cite[Cor.~7.1.1]{Cuppens1975}. Further examples of distributions in $I_0$ are given in \cite[Thms.~9.0.1 and 10.0.1]{Linnik}, \cite{LinnikOstrovskii} or \cite{Cuppens1975}.
\end{ex}


\section{Examples} \label{S3}

Obviously, every infinitely divisible distribution on $\bR$ is quasi-infinitely divisible, and its L\'evy measure and quasi-L\'evy
measure coincide. An important example of quasi-infinitely divisible distributions has been established by Cuppens \cite{Cuppens1969}. Namely, a distribution which has an atom of mass $> 1/2$ is necessarily quasi-infinitely divisible. More precisely, it holds:

\begin{thm} {\rm (Cuppens \cite[Prop.~1]{Cuppens1969}, \cite[Thm.~4.3.7]{Cuppens1975})} \label{thm-cuppens}
Let $\mu$ be a non-degenerate distribution such that there is $\lambda\in \bR$ with $p= \mu (\{\lambda\})  > 1/2$ and define
$\sigma= (1-p)^{-1} (\mu - p \delta_\lambda)$. Then $\mu$ is quasi-infinitely divisible with finite quasi-L\'evy measure $\nu$ given by
$$\nu = \left( \sum_{m=1}^\infty \frac{1}{m} (-1)^{m+1} \left(
\frac{1-p}{p} \right)^m (\delta_{-\lambda} \ast \sigma)^{\ast m}
\right)_{| \mathbb{R} \setminus \{ 0 \}}$$
drift $\lambda$, and Gaussian part $a=0$, i.e. its
characteristic function admits the representation
$$\widehat{\mu}(z) = \exp \left( i \lambda z + \int_{\bR} (e^{izx}
-1) \, \nu(\di x) \right), \quad z\in \bR.$$
\end{thm}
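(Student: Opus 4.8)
The plan is to make the atom at $\lambda$ explicit, factor it out of $\widehat\mu$, and expand the remaining factor as a convergent logarithmic series. Write $q := (1-p)/p$; since $\mu$ is non-degenerate we have $p<1$, and since $p>1/2$ we get $q\in(0,1)$. Put $\tau := \delta_{-\lambda}\ast\sigma$, a probability distribution. Then $\mu = \delta_\lambda\ast\bigl(p\,\delta_0 + (1-p)\,\tau\bigr)$, so
\[
\widehat\mu(z) = \re^{\ri\lambda z}\,p\,\bigl(1 + q\,\widehat\tau(z)\bigr),\qquad z\in\bR .
\]
Because $|\widehat\tau(z)|\le 1$ and $0<q<1$, we have $|q\,\widehat\tau(z)|\le q<1$, so $1+q\,\widehat\tau(z)$ never vanishes (confirming that $\widehat\mu$ has no zeros), stays in the open right half-plane, and its principal logarithm is given by the power series $\sum_{m\ge1}\frac{(-1)^{m+1}}{m}\bigl(q\,\widehat\tau(z)\bigr)^m$, which converges uniformly in $z$ since it is dominated by $\sum_{m\ge1}q^m/m=-\log(1-q)<\infty$.

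Next I would lift this scalar identity to the level of measures. Define $\rho := \sum_{m\ge1}\frac{(-1)^{m+1}}{m}\,q^m\,\tau^{\ast m}$. Each $\tau^{\ast m}$ is a probability measure and $\sum_{m\ge1}q^m/m<\infty$, so the partial sums are Cauchy in total variation norm; hence $\rho$ is a well-defined finite signed measure with $|\rho|(\bR)\le -\log(1-q)$. Since $\widehat{\tau^{\ast m}}=(\widehat\tau)^m$ and the series converges in total variation, we may integrate $\re^{\ri z x}$ term by term to obtain, for every $z\in\bR$,
\[
\widehat\rho(z) = \sum_{m\ge1}\frac{(-1)^{m+1}}{m}\,q^m\,\widehat\tau(z)^m = \log\bigl(1+q\,\widehat\tau(z)\bigr),
\]
the principal branch; in particular $\widehat\rho(0)=\log(1+q)=-\log p$, using $p(1+q)=1$. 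Combining with the display for $\widehat\mu$,
\[
\widehat\mu(z) = \exp\bigl(\ri\lambda z + \log p + \log(1+q\,\widehat\tau(z))\bigr) = \exp\bigl(\ri\lambda z + \widehat\rho(z) - \widehat\rho(0)\bigr) = \exp\Bigl(\ri\lambda z + \int_\bR(\re^{\ri z x}-1)\,\rho(\di x)\Bigr).
\]

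To conclude, note the integrand $\re^{\ri z x}-1$ vanishes at $x=0$, so the integral is unchanged if $\rho$ is replaced by its restriction $\nu := \rho_{|\bR\setminus\{0\}}$ --- which is exactly the object in the statement, with $q^m=((1-p)/p)^m$ and $\tau=\delta_{-\lambda}\ast\sigma$. Being the restriction of a finite signed measure to $\bR\setminus\{0\}$, $\nu$ is a finite quasi-L\'evy measure: $|\nu|(\{0\})=0$, $\int_\bR(1\wedge x^2)\,|\nu|(\di x)\le|\nu|(\bR)<\infty$, and $\int_{|x|<1}|x|\,|\nu|(\di x)<\infty$. Thus $\widehat\mu(z)=\exp\bigl(\ri\lambda z+\int_\bR(\re^{\ri z x}-1)\,\nu(\di x)\bigr)$, and since $z\mapsto \ri\lambda z+\int_\bR(\re^{\ri z x}-1)\,\nu(\di x)$ is continuous (dominated convergence, $|\nu|$ finite) and vanishes at $z=0$, it is the characteristic exponent of $\mu$; rewriting $\int(\re^{\ri z x}-1)\,\nu(\di x)$ in the truncated form of \eqref{eq-trunc3} (legitimate since $c$ is $|\nu|$-integrable) puts $\widehat\mu$ into the shape of \eqref{eq-trunc5} with $\zeta(\di x)=(1\wedge x^2)\nu(\di x)$ and some $\gamma\in\bR$, so $\mu\sim\qid(0,\nu,\gamma)_c$ with Gaussian variance $a=0$ and, by Remark \ref{rem-drift}, drift $\lambda$. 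The only points needing care are the total-variation convergence of the series defining $\rho$ --- which is what makes $\nu$ an honest signed measure and legitimises the term-by-term Fourier transform --- and the tracking of logarithm branches to identify the exponent; both rest on the strict bound $q<1$, which is precisely where the hypothesis $p>1/2$ enters.
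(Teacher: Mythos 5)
Your proof is correct and follows essentially the same route as the paper: the paper establishes Proposition \ref{prop-cuppens2} for complex measures by factoring out $\widehat{\alpha}$, expanding $\log(1+\widehat{\rho})$ for $|\rho|(\bR)<1$, transforming the series term by term and discarding the mass at $0$, and then obtains Theorem \ref{thm-cuppens} via Corollary \ref{cor-cuppens3} with $\mu_1=\delta_\lambda$, $\mu_2=\sigma$ and $\rho=(q/p)\,\delta_{-\lambda}\ast\sigma$. You simply carry out this identical logarithmic-series argument directly in the special case, with the same total-variation convergence and branch-tracking points correctly handled, so there is nothing to object to.
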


Theorem \ref{thm-cuppens} gives rise to many examples of quasi-infinitely divisible distributions that are not infinitely divisible. In particular, if $\mu$ has an atom of mass in $(1/2,1)$ and has bounded support, then it is quasi-infinitely divisible without being infinitely divisible, since the only infinitely divisible distributions with bounded support are the Dirac measures (cf. \cite[Cor. 24.4]{Sa}). Since convolutions of quasi-infinitely divisible distributions are also quasi-infinitely divisible, this allows to detect also quasi-infinitely divisible distributions that have atoms with masses less than $1/2$.

\begin{ex} \label{ex-bin}
It follows from Theorem \ref{thm-cuppens} that a two-point distribution $p\delta_\lambda + (1-p) \delta_{\lambda'}$ is quasi-infinitely divisible as long as $p\neq 1/2$. In particular, the Bernoulli distribution $b(1,p)$ is quasi-infinitely divisible for $p\neq 1/2$. Since convolutions of quasi-infinitely divisible distributions are quasi-infinitely divisible, also the binomial distribution $b(n,p)$ with parameters $n\in \bN$ and $p\in (0,1)$ is quasi-infinitely divisible as long as $p\neq 1/2$.
\end{ex}

The characteristic function of a  quasi-infinitely distribution cannot have zeroes. Hence, a two-point distribution of the form $\mu = (1/2) \delta_{\lambda} + (1/2)\delta_{\lambda'}$ with $\lambda \neq \lambda'$ is not quasi-infinitely divisible. Also, the characteristic function of the $b(n,1/2)$-distribution has zeroes, so $b(n,1/2)$ is not quasi-infinitely divisible. In particular, for $n\in \bN$ and $p\in (0,1)$ we see that $b(n,p)$ is quasi-infinitely divisible if and only if its characteristic function has no zeroes, and if and only if $p\neq 1/2$.

It is natural to ask if every distribution whose characteristic function does not have zeroes must be quasi-infinitely divisible. The following example shows that this is not the case:

\begin{ex} \label{ex-linnik}
Let $\varphi\cl \bR \to \bR$ be defined by
$$\varphi(z) = \begin{cases} (1/ 7) \exp (1-z^4), & |z|\geq 1,\\
(2/7) z^2 - (8/7) |z| + 1 , & |z| < 1.
\end{cases}$$
Then $\varphi$ is a real-valued, even  and continuous function with $\varphi(0) = 1$ and
$\lim_{z\to \infty} \varphi(z) = 0$. Further, $\varphi$ is $C^2$ on $(0,\infty)$ with
strictly positive second derivative there, hence $\varphi$ is convex on $(0,\infty)$.
It follows from P\'olya's theorem (e.g.\ Lukacs \cite[Thm.\ 4.3.1]{L} or Feller
\cite[XV.3, Ex.\ (b)]{Feller2}) that $\varphi$ is the characteristic function of an
absolutely continuous distribution, $\mu$ say. Observe that $\varphi(z) \neq 0$ for all
$z\in \bR$, but that $\lim_{z\to\infty} z^{-2} \log \varphi(z) = -\infty$. Hence $\mu$
is not quasi-infinitely divisible by Lemma \ref{lem-a}, although its characteristic
function has no zeroes.
\end{ex}

We have seen that not every probability measure whose characteristic function is non-vanishing is quasi-infinitely divisible. However, for distributions concentrated on the integers, this does not happen, as we shall see in Section \ref{S-integers}. In this section in Theorem \ref{thm-finite} we will prove a special case of this result for distributions concentrated on $\{0,1,\ldots, n\}$; this is more elementary, the quasi-L\'evy measure can be given more explicitly, and the special case will be needed in the proof of the general result in Theorem \ref{thm-integers}.

For the proof of Theorem \ref{thm-finite}, we will need a generalisation of Cuppens' Theorem stated above, which we do now for
complex-valued measures rather than probability distributions; this will be helpful later when factorizing the characteristic function of a probability distribution on $\{0,1,\ldots, n\}$. Recall that a \emph{complex measure} $\rho$ on $\bR$ is a function $\rho:\cB \to \bC$ such that $\rho(\emptyset) = 0$ and $\rho (\bigcup_{j=1}^\infty A_j) = \sum_{j=1}^\infty \rho (A_j)$ for all sequences $(A_j)_{j\in \bN}$ of pairwise disjoint sets in $\cB$; this implies that the series converges unconditionally, in particular absolutely for each partition. A complex measure is automatically finite. Its total variation $|\rho|$ is defined by formula \eqref{eq-variation}. This is a finite measure. The Fourier transform of a complex measure $\rho$ is defined by $\widehat{\rho}(z) = \int_\bR \re^{\ri z x} \, \rho(\di x)$. It satisfies $|\widehat{\rho}(z)|\leq |\rho|(\bR)$ for all $z\in \bR$. We come now to the aforementioned generalisation of Cuppens' result:

\begin{prop} \label{prop-cuppens2}
Let $\alpha$ and $\beta$ be two complex measures such that $\widehat{\alpha}(z) \neq 0$ for all $z\in \bR$. Suppose there is a complex measure $\rho$ with $|\rho|(\bR) < 1$ such that $\widehat{\rho}(z) = \widehat{\beta}(z) /\widehat{\alpha}(z)$ for all $z\in \bR$. Define the complex measure $\widetilde{\nu}$ by
$$\widetilde{\nu} = \left( \sum_{m=1}^\infty \frac{1}{m} (-1)^{m+1} \rho^{\ast m} \right)_{|\bR \setminus \{0\}}.$$
Then
$$(\alpha +\beta)\,{\widehat{ }} \,(z) = \frac{\alpha(\bR) + \beta(\bR)}{\alpha(\bR)}\, \widehat{\alpha}(z) \, \exp \left( \int_\bR (\re^{\ri x z} - 1) \, \widetilde{\nu}(\di x) \right), \quad z \in \bR.$$
\end{prop}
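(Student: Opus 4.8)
The plan is to reduce everything to the scalar power-series identity for the logarithm on the unit disc, working throughout at the level of Fourier transforms. Since $\widehat{\alpha+\beta}(z)=\widehat\alpha(z)+\widehat\beta(z)=\widehat\alpha(z)\bigl(1+\widehat\rho(z)\bigr)$ and $\widehat\alpha$ has no zeroes, the assertion is equivalent to
\[
1+\widehat\rho(z)=\frac{\alpha(\bR)+\beta(\bR)}{\alpha(\bR)}\,\exp\!\left(\int_\bR(\re^{\ri xz}-1)\,\widetilde\nu(\di x)\right),\qquad z\in\bR.
\]
The hypothesis $|\rho|(\bR)<1$ is exactly what makes this work: it forces $|\widehat\rho(z)|\le|\rho|(\bR)<1$ for every $z$, so $1+\widehat\rho(z)$ stays in the open disc of radius $1$ about $1$, on which the principal logarithm is holomorphic, equals its power series $\log(1+w)=\sum_{m\ge1}\frac{(-1)^{m+1}}{m}w^m$, and satisfies $\exp\log(1+w)=1+w$.

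Next I would realise that series as the Fourier transform of an explicit complex measure. From $|\widehat{\rho^{\ast m}}(z)|\le|\rho^{\ast m}|(\bR)\le(|\rho|(\bR))^m$ and $\sum_{m\ge1}m^{-1}(|\rho|(\bR))^m<\infty$, the series $\kappa:=\sum_{m\ge1}\frac{(-1)^{m+1}}{m}\rho^{\ast m}$ converges in total-variation norm and hence defines a complex measure on $\bR$, with $\widetilde\nu=\kappa_{|\bR\setminus\{0\}}$. Using $\widehat{\rho^{\ast m}}=\widehat\rho^{\,m}$ for complex measures (as recalled in the introduction, cf.\ \cite[Sect.~2.5]{Cuppens1975}, \cite[Exercise~8.5]{Rudin}), the Fourier transform may be computed term by term, so $\widehat\kappa(z)=\sum_{m\ge1}\frac{(-1)^{m+1}}{m}\widehat\rho(z)^m=\log(1+\widehat\rho(z))$, whence $1+\widehat\rho(z)=\exp(\widehat\kappa(z))$ for all $z$.

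Finally I would put $\widehat\kappa$ in compensated form and identify the multiplicative constant. Splitting off the atom at the origin,
\[
\widehat\kappa(z)=\kappa(\{0\})+\int_\bR\re^{\ri zx}\,\widetilde\nu(\di x)=\kappa(\{0\})+\widetilde\nu(\bR)+\int_\bR(\re^{\ri zx}-1)\,\widetilde\nu(\di x),
\]
and $\kappa(\{0\})+\widetilde\nu(\bR)=\kappa(\bR)=\widehat\kappa(0)$, so $\widehat\kappa(z)=\widehat\kappa(0)+\int_\bR(\re^{\ri zx}-1)\,\widetilde\nu(\di x)$. Since $\widehat\rho(0)=\rho(\bR)=\widehat\beta(0)/\widehat\alpha(0)=\beta(\bR)/\alpha(\bR)$, we get $\exp(\widehat\kappa(0))=1+\widehat\rho(0)=(\alpha(\bR)+\beta(\bR))/\alpha(\bR)$; substituting back and multiplying through by $\widehat\alpha(z)$ gives the proposition.

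I do not expect a genuine obstacle here; the two points that need care are (i) the convergence of $\sum_{m\ge1}\frac{(-1)^{m+1}}{m}\rho^{\ast m}$ in the Banach space of complex measures equipped with the total-variation norm, which legitimises the termwise computation $\widehat\kappa=\sum_{m\ge1}\frac{(-1)^{m+1}}{m}\widehat\rho^{\,m}$, and (ii) checking that $1+\widehat\rho(z)$ never leaves the disc on which the logarithm power series and $\exp$ are mutually inverse — which is precisely the role of the strict inequality $|\rho|(\bR)<1$. Everything else is bookkeeping with the atom at $0$ and the value at $z=0$. It is worth noting that this is the natural complex-measure version of Cuppens' Theorem~\ref{thm-cuppens}, recovered by taking $\alpha=p\,\delta_\lambda$ and $\beta=\mu-p\,\delta_\lambda$, for which $\rho=\tfrac{1-p}{p}\,\delta_{-\lambda}\ast\sigma$ has $|\rho|(\bR)=\tfrac{1-p}{p}<1$.
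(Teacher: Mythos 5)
Your argument is correct and follows essentially the same route as the paper's proof: factor out $\widehat\alpha$, expand $\log(1+\widehat\rho(z))$ using $|\widehat\rho(z)|\le|\rho|(\bR)<1$, recognise the series as the Fourier transform of $\sum_{m\ge1}\frac{(-1)^{m+1}}{m}\rho^{\ast m}$, and split off the mass at the origin together with the value at $z=0$ to produce the constant $(\alpha(\bR)+\beta(\bR))/\alpha(\bR)$. Your explicit remark about total-variation convergence justifying the termwise Fourier computation is a point the paper leaves implicit, but it is the same proof.
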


\begin{proof}
First observe that
$$(\alpha +\beta)\,{\widehat{ }} \,(z) = \widehat{\alpha}(z) \left( 1 + \frac{\widehat{\beta}(z)}{\widehat{\alpha}(z)} \right) = \widehat{\alpha}(z) \exp \left( \log (1+ \widehat{\rho}(z) ) \right), \quad z\in \bR.$$
Since $|\widehat{\rho}(z)|\leq |\rho| < 1$ we can use the logarithmic expansion $\log (1+w) =\break
\sum_{m=1}^\infty (-1)^{m+1} m^{-1} w^m$ for $|w|<1$ and continue
\begin{eqnarray*}
\log ( 1+\widehat{\rho}(z)) & = & \sum_{m=1}^\infty (-1)^{m+1} m^{-1} (\widehat{\rho}(z))^m \\
& = & \left( \sum_{m=1}^\infty (-1)^{m+1} m^{-1} \rho^{\ast m} \right)^{\wedge} (z) \\
& = & \int_\bR (\re^{\ri z x} - 1) \left( \sum_{m=1}^\infty (-1)^{m+1} m^{-1} \rho^{\ast m} \right) (\di x) + \sum_{m=1}^\infty (-1)^{m+1} m^{-1} \rho^{\ast m} (\bR) \\
& = & \int_\bR (\re^{\ri z x} - 1) \widetilde{\nu}(\di x) + \log (1+\rho(\bR)),
\end{eqnarray*}
where in the last line we used that $(\re^{\ri z x} - 1)_{|x=0} = 0$, so that a point mass of the measure at 0 is ignored in the integration. Since $$\exp \left( \log (1+\rho(\bR)) \right) = 1 + \widehat{\rho}(0) = \frac{\widehat{\alpha}(0) + \widehat{\beta}(0)}{\widehat{\alpha}(0)} = \frac{\alpha(\bR) + \beta(\bR)}{\alpha(\bR)}$$
this gives the claim.
\end{proof}

The above result can in particular be applied to convex-combinations of probability measures:

\begin{cor} \label{cor-cuppens3}
Let $p>q> 0$ with $p+q=1$  and $\mu_1$ and $\mu_2$ be two probability distributions on $\bR$ such that
$\mu_1$ is quasi-infinitely divisible with characteristic triplet $(a,\nu,\gamma)_c$ with respect to $c$.
Suppose further that there exists a finite signed measure $\sigma$ on $\bR$ with $|\sigma|(\bR) < p/q$ and
$\widehat{\sigma}(z) = \widehat{\mu_2}(z) / \widehat{\mu_1}(z)$ for all $z\in \bR$. Define a finite signed measure $\widetilde{\nu}$ by
$$\widetilde{\nu} = \left( \sum_{m=1}^\infty \frac{1}{m} (-1)^{m+1} (q/p)^m \sigma^{*m} \right)_{|\bR \setminus \{0\}}.$$
Then $p \mu_1 + q \mu_2$ is quasi-infinitely divisible with characteristic triplet $(a,\nu+\widetilde{\nu}, \gamma+\int_\bR c(x) \widetilde{\nu}(\di x))_c$. If additionally $\int_{|x|<1} |x| \, |\nu|(\di x) < \infty$ and $\mu_1$ has drift $\lambda$, then also $p\mu_1 + q \mu_2$ has drift $\lambda$, i.e. it has characteristic triplet $(a,\nu+\widetilde{\nu},\lambda)_0$.
\end{cor}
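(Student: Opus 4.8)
The plan is to derive Corollary~\ref{cor-cuppens3} from Proposition~\ref{prop-cuppens2} by specialising the complex measures $\al$ and $\bt$ appropriately. First I would set $\al = p\mu_1$ and $\bt = q\mu_2$, so that $\al+\bt = p\mu_1+q\mu_2$ is the distribution whose quasi-infinite divisibility we want to establish. Since $\mu_1$ is quasi-infinitely divisible, $\wh{\mu_1}$ has no zeroes, hence $\wh\al(z) = p\,\wh{\mu_1}(z) \neq 0$ for all $z\in\bR$, so the hypothesis of Proposition~\ref{prop-cuppens2} on $\wh\al$ holds. Next I would identify the complex measure $\rho$: since $\wh\bt(z)/\wh\al(z) = (q/p)\,\wh{\mu_2}(z)/\wh{\mu_1}(z) = (q/p)\,\wh\sg(z)$, the natural choice is $\rho = (q/p)\,\sg$, which is a finite signed (hence complex) measure with $|\rho|(\bR) = (q/p)\,|\sg|(\bR) < (q/p)\cdot(p/q) = 1$ by assumption. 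Thus Proposition~\ref{prop-cuppens2} applies with these data, and the measure $\wt\nu$ in the proposition is exactly $\left(\sum_{m=1}^\infty \tfrac1m (-1)^{m+1} (q/p)^m \sg^{\ast m}\right)_{|\bR\setminus\{0\}}$, which matches the $\wt\nu$ defined in the corollary.

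Then I would read off the conclusion of Proposition~\ref{prop-cuppens2}. Since $\al(\bR) = p$, $\bt(\bR) = q$ and $p+q=1$, the prefactor $(\al(\bR)+\bt(\bR))/\al(\bR) = 1/p$, so the proposition gives
\begin{equation*}
(p\mu_1+q\mu_2)\,{\wh{}}\,(z) = \frac1p\,\wh{\mu_1}(z)\cdot p\cdot \exp\left(\int_\bR (\re^{\ri xz}-1)\,\wt\nu(\di x)\right) = \wh{\mu_1}(z)\,\exp\left(\int_\bR (\re^{\ri xz}-1)\,\wt\nu(\di x)\right).
\end{equation*}
Now I would substitute the L\'evy--Khintchine representation of $\wh{\mu_1}$. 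Writing $\wh{\mu_1}(z) = \exp\!\big(\ri\gm z - \tfrac{a}{2}z^2 + \int_\bR g_c(x,z)\,\zt_1(\di x)\big)$ where $\zt_1$ is built from $(a,\nu)$ via \eqref{eq-zeta}, and combining with the exponential factor above, one obtains $(p\mu_1+q\mu_2)\,{\wh{}}\,(z) = \exp(\Ps(z))$ where $\Ps(z) = \ri\gm z - \tfrac a2 z^2 + \int_\bR g_c(x,z)\,\zt_1(\di x) + \int_\bR(\re^{\ri xz}-1)\,\wt\nu(\di x)$. The only bookkeeping step is to rewrite $\int_\bR (\re^{\ri xz}-1)\,\wt\nu(\di x) = \int_\bR(\re^{\ri xz}-1-\ri z c(x))\,\wt\nu(\di x) + \ri z\int_\bR c(x)\,\wt\nu(\di x)$; note $\wt\nu$ has no mass at $0$ and is finite with compact-support-free but integrable singularity controlled by $|\sg|$, so $\int c\,d\wt\nu$ is a finite real number (here one uses that $c$ is bounded and $\wt\nu$ is a finite signed measure, being a convergent series of convolution powers scaled by $(q/p)^m$). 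Absorbing this into the location parameter shows $\Ps$ is exactly the characteristic exponent associated to the pair $(\zt_1+\wt\nu_{\text{ext}},\, \gm + \int_\bR c\,d\wt\nu)$, equivalently the triplet $(a,\nu+\wt\nu,\gm+\int_\bR c(x)\,\wt\nu(\di x))_c$. Since $\wt\nu$ has no atom at $0$, the Gaussian variance is unchanged at $a$, and since $\wt\nu$ vanishes near $0$ it is trivially a quasi-L\'evy type measure, so $\nu+\wt\nu$ is the quasi-L\'evy measure of $p\mu_1+q\mu_2$ (using the uniqueness of the characteristic pair recalled after Definition~\ref{def1-qid}).

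For the last sentence: if $\int_{|x|<1}|x|\,|\nu|(\di x) < \infty$, then since $\wt\nu$ vanishes on a neighbourhood of $0$ we also have $\int_{|x|<1}|x|\,|\nu+\wt\nu|(\di x) < \infty$, so the drift of $p\mu_1+q\mu_2$ exists; by Remark~\ref{rem-drift} the drift equals $\gm + \int_\bR c(x)\,\wt\nu(\di x) - \int_\bR c(x)\,(\nu+\wt\nu)(\di x) = \gm - \int_\bR c(x)\,\nu(\di x)$, which is precisely the drift $\ld$ of $\mu_1$. Hence $p\mu_1+q\mu_2 \sim \qid(a,\nu+\wt\nu,\ld)_0$.

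The main obstacle I anticipate is purely the translation of indices: verifying that $\rho = (q/p)\sg$ really satisfies $|\rho|(\bR)<1$ and that the series defining $\wt\nu$ converges to a finite signed measure (which follows since $|\sg^{\ast m}|(\bR) \le |\sg|(\bR)^m$ and $\sum_m \tfrac1m (q/p)^m |\sg|(\bR)^m < \infty$ as $(q/p)|\sg|(\bR)<1$), so that the restriction to $\bR\setminus\{0\}$ is well-defined and the integral $\int c\,d\wt\nu$ converges. There is no substantive difficulty beyond Proposition~\ref{prop-cuppens2} itself; the corollary is essentially a change of normalisation together with the observation that multiplying a quasi-infinitely divisible characteristic function by $\exp(\int(\re^{\ri xz}-1)\,\wt\nu(\di x))$ corresponds to adding $\wt\nu$ to the quasi-L\'evy measure and adjusting the location.
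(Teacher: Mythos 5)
Your proposal is correct and follows essentially the same route as the paper's proof: apply Proposition \ref{prop-cuppens2} with $\alpha=p\mu_1$, $\beta=q\mu_2$ and $\rho=(q/p)\sigma$ (noting $|\rho|(\bR)<1$ and $\widehat{\alpha}\neq 0$), then combine the resulting factorisation with the L\'evy--Khintchine representation of $\widehat{\mu_1}$ and absorb $\ri z\int_\bR c(x)\,\widetilde{\nu}(\di x)$ into the location parameter, with the drift statement handled analogously. The only small slip is your claim that $\widetilde{\nu}$ vanishes on a neighbourhood of $0$ (the restriction to $\bR\setminus\{0\}$ only removes the atom at $0$), but nothing is lost: $\widetilde{\nu}$ is a finite signed measure, so $\int_{|x|<1}|x|\,|\widetilde{\nu}|(\di x)<\infty$ holds automatically and both the identification of the triplet $(a,\nu+\widetilde{\nu},\gamma+\int_\bR c(x)\,\widetilde{\nu}(\di x))_c$ and the drift computation go through exactly as you wrote them.
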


\begin{proof}
Define $\alpha = p \mu_1$, $\beta= q \mu_2$ and $\rho = (q/p) \sigma$.
Since $\mu_1$ is quasi-infinitely divisible, we have $\widehat{\mu_1}(z) \neq 0$ and $\widehat{\rho}(z) = (q/p)\widehat{\sigma}(z)
= \widehat{\beta}(z) / \widehat{\alpha}(z)$. By Proposition \ref{prop-cuppens2} we then obtain
\begin{align*}
&(p \mu_1 + q \mu_2)\,\widehat{ { }}\;\,(z) = \widehat{\alpha}(z) + \widehat{\beta}(z)
= \frac{p+q}{p} p \widehat{\mu_1}(z) \, \exp \left( \int_\bR (\re^{\ri z x} - 1) \, \widetilde{\nu}(\di x) \right) \\
&\qq = \exp \left( -az^2/2 + \int_\bR (\re^{\ri z x} - 1 - \ri z c(x) ) \, \nu(\di x) +\ri \gm z + \int_\bR (\re^{\ri z x}
- 1) \, \widetilde{\nu}(\di x) \right).
\end{align*}
This shows that $p\mu_1 + q \mu_2$ is quasi-infinitely divisible with characteristic triplet $(a,\nu + \widetilde{\nu}, \gamma + \int_\bR c(x) \, \widetilde{\nu}(\di x))_c$. The drift assertion follows in the same way.
\end{proof}

Corollary \ref{cor-cuppens3} contains Cuppens' result (Theorem \ref{thm-cuppens}) as a special case.
To see this, let $\mu$ be a non-degenerate distribution that has an atom of mass $p= \mu(\{\lambda\}) > 1/2$ at $\lambda$.
Define $\mu_1= \delta_\lambda$ and $\mu_2= (1-p)^{-1} (\mu - p \delta_\lambda)$. Then $\mu_1$ is infinitely divisible and
$$\frac{\widehat{\mu_2}(z)}{\widehat{\mu_1}(z)} = \widehat{\mu_2}(z) \, \widehat{\delta_{-\lambda}}(z) = (\mu_2 \ast \delta_{-\lambda})\, \widehat{ { }}\;\, (z).$$
Theorem \ref{thm-cuppens} then follows from Corollary \ref{cor-cuppens3}.
Another example is the following:

\begin{ex} \label{ex-normal}
Let $b>a>0$, $\mu_1 = N(0,a)$, $\mu_2 = N(0,b)$, $p \in (1/2,1)$ and $q=1-p$. Define $\sigma = N(0,b-a)$. Then $\mu_1$ is infinitely divisible, and
$$\frac{\widehat{\mu_2}(z)}{\widehat{\mu_1}(z)} = \frac{\re^{-b z^2/2}}{\re^{-a z^2/2}} =  \widehat{\sigma}(z).$$
Corollary \ref{cor-cuppens3} then implies that $p \mu_1 + q \mu_2$ is quasi-infinitely divisible with characteristic triplet $(a,\widetilde{\nu},0)_0$ with $\widetilde{\nu}$ as given there.
Observe that $p\mu_1 + q \mu_2$ is a particular case of a variance mixture of normal distributions and, since the underlying
mixing distribution function has bounded support, it is known that $p \mu_1 + q \mu_2$ is not infinitely divisible, see
Kelker \cite[Thm.~2]{Kelker1971}.  Another proof that $p \mu_1 + q \mu_2$ is not infinitely divisible follows from
\cite[Rem.~26.3]{Sa}, since the tail of $p \mu_1 + q \mu_2$ is asymptotically equal to that of $q\mu_2$ but $p \mu_1 + q \mu_2$
is not Gaussian.
\end{ex}

The previous example can be generalised:

\begin{ex}
Let $\mu_1$ and $\mu_2$ be two quasi-infinitely divisible distributions with $\mu_1 \sim \qid (a_1, \nu_1, \gamma_1)_c$ and $\mu_2 \sim \qid (a_2,\nu_2,\gamma_2)_c$, where $0\leq a_1 \leq a_2$ and $\nu_1$ and $\nu_2$ are finite quasi-L\'evy measures such that $\nu_2-\nu_1$ is a positive measure ($\mu_1$ and $\mu_2$ could in particular be infinitely divisible). Then $p\mu_1 + (1-p) \mu_2$ is quasi-infinitely divisible for $p\in (1/2,1)$. This can be seen from the fact that
$$\frac{\widehat{\mu_2}(z)}{\widehat{\mu_1}(z)} = \exp \left( \ri (\gamma_2 - \gamma_1) z - (a_2-a_1)z^2/2 + \int_{\bR} (\re^{\ri z x} - 1 - \ri z c(x)) \, (\nu_2-\nu_1)(\di x) \right),$$
which is the characteristic function of an infinitely divisible distribution $\sigma$, and hence Corollary \ref{cor-cuppens3} applies.
\end{ex}

The following lemma exploits Proposition \ref{prop-cuppens2} in more detail, and will be needed in the proof of Theorem \ref{thm-finite}.

\begin{lem} \label{lem-finite}
Let $\xi\in \bC$ with $|\xi|\neq 1$. Then the characteristic function of the complex measure $\mu = \delta_1 - \xi \delta_0$ satisfies
$$\widehat{\mu}(z) = \begin{cases} (1-\xi) \exp \left( \ri z + \int_\bR (\re^{\ri z x} - 1) \left( -\sum_{m=1}^\infty m^{-1}  \xi^m \delta_{-m} \right)(\di x)\right), & \mbox{\rm if} \;\; |\xi| < 1, \\
(1-\xi) \exp \left( \int_\bR (\re^{\ri x z} - 1 ) \, \left( -\sum_{m=1}^\infty m^{-1} \xi^{-m} \delta_m \right) (\di x) \right), & \mbox{\rm if} \;\; |\xi| > 1.
\end{cases} $$
\end{lem}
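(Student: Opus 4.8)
Since $\widehat{\mu}(z) = \re^{\ri z} - \xi$ and $|\re^{\ri z}| = 1 \neq |\xi|$, the function $\widehat{\mu}$ never vanishes, and the point of the lemma is to produce the explicit L\'evy--Khintchine type factorisation. The plan is to apply Proposition \ref{prop-cuppens2} with $\{\alpha,\beta\} = \{\delta_1,\, -\xi\delta_0\}$, assigning to $\alpha$ whichever of the two atoms is the larger in total variation: when $|\xi| < 1$ take $\alpha = \delta_1$ and $\beta = -\xi\delta_0$, and when $|\xi| > 1$ take $\alpha = -\xi\delta_0$ and $\beta = \delta_1$. In both cases the ratio $\widehat{\beta}/\widehat{\alpha}$ will turn out to be the Fourier transform of a single point mass of total variation strictly less than $1$, which is exactly the hypothesis needed to run Proposition \ref{prop-cuppens2}, and the infinite convolution power $\rho^{\ast m}$ in that proposition will collapse to a point mass, making the series for $\wt\nu$ completely explicit.

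\textbf{Case $|\xi| < 1$.} Here $\widehat{\alpha}(z) = \re^{\ri z}$ is nowhere zero and $\widehat{\beta}(z)/\widehat{\alpha}(z) = -\xi\re^{-\ri z} = \widehat{\rho}(z)$ for the complex measure $\rho = -\xi\delta_{-1}$, which satisfies $|\rho|(\bR) = |\xi| < 1$; thus Proposition \ref{prop-cuppens2} applies. Using $\delta_{-1}^{\ast m} = \delta_{-m}$ one gets $\rho^{\ast m} = (-\xi)^m\delta_{-m}$, and since $(-1)^{m+1}(-\xi)^m = -\xi^m$, the measure $\wt\nu$ of Proposition \ref{prop-cuppens2} equals $-\sum_{m=1}^\infty m^{-1}\xi^m\delta_{-m}$ (all atoms sit at $-m$ with $m\geq 1$, so restriction to $\bR\setminus\{0\}$ changes nothing, and convergence in total variation is automatic from the conclusion of the proposition). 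Finally $\frac{\alpha(\bR)+\beta(\bR)}{\alpha(\bR)}\,\widehat{\alpha}(z) = (1-\xi)\re^{\ri z}$, and writing $\re^{\ri z} = \exp(\ri z)$ inside the exponential yields the first displayed formula.

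\textbf{Case $|\xi| > 1$.} Now $\widehat{\alpha}(z) = -\xi \neq 0$, and $\widehat{\beta}(z)/\widehat{\alpha}(z) = -\xi^{-1}\re^{\ri z} = \widehat{\rho}(z)$ for $\rho = -\xi^{-1}\delta_1$, with $|\rho|(\bR) = |\xi|^{-1} < 1$, so again Proposition \ref{prop-cuppens2} applies. Here $\rho^{\ast m} = (-\xi^{-1})^m\delta_m$ and $(-1)^{m+1}(-\xi^{-1})^m = -\xi^{-m}$, so $\wt\nu = -\sum_{m=1}^\infty m^{-1}\xi^{-m}\delta_m$, again with no atom at $0$. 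The prefactor is $\frac{\alpha(\bR)+\beta(\bR)}{\alpha(\bR)}\,\widehat{\alpha}(z) = \frac{1-\xi}{-\xi}\cdot(-\xi) = 1-\xi$, giving the second displayed formula.

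\textbf{On the difficulty.} There is no serious obstacle: the proof is a direct specialisation of Proposition \ref{prop-cuppens2}, and the only substantive point is that the hypothesis $|\xi|\neq 1$ is precisely what makes $|\rho|(\bR) < 1$ hold in whichever of the two cases one is in — this is why the two cases must be separated. The remaining content is routine sign bookkeeping for $(-1)^{m+1}(-\xi)^{\pm m}$ and the identities $\delta_{\pm 1}^{\ast m} = \delta_{\pm m}$; I would also remark that one can alternatively bypass Proposition \ref{prop-cuppens2} entirely by writing $\re^{\ri z}-\xi = \re^{\ri z}(1-\xi\re^{-\ri z})$ (resp.\ $= -\xi(1-\xi^{-1}\re^{\ri z})$) and expanding $\log(1-w) = -\sum_{m\geq 1} m^{-1}w^m$, which is exactly the computation internal to that proposition.
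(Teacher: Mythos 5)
Your proof is correct and follows essentially the same route as the paper: both cases use Proposition \ref{prop-cuppens2} with exactly the same choices $\alpha=\delta_1$, $\beta=-\xi\delta_0$, $\rho=-\xi\delta_{-1}$ (for $|\xi|<1$) and $\alpha=-\xi\delta_0$, $\beta=\delta_1$, $\rho=-\xi^{-1}\delta_1$ (for $|\xi|>1$), together with $\rho^{\ast m}$ collapsing to a point mass. The sign bookkeeping and the prefactor computation match the paper's argument.
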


\begin{proof}
Suppose first that $|\xi|<1$. Define $\alpha= \delta_1$, $\beta= -\xi \delta_0$ and $\rho= -\xi \delta_{-1}$. Then $\widehat{\alpha}(z) = \re^{\ri z} \neq 0$, $|\rho|(\bR) = |\xi|<1$ and $\widehat{\rho}(z) = -\xi \re^{-\ri z} = \widehat{\beta}(z) / \widehat{\alpha}(z)$. The claim then follows from Proposition \ref{prop-cuppens2}, by observing that $\rho^{\ast m} = (-1)^m \xi^m \delta_{-m}$.

Now suppose that $|\xi|>1$. Define $\alpha= -\xi \delta_0$, $\beta= \delta_1$ and $\rho = -\xi^{-1} \delta_1$. Again, $\widehat{\alpha}(z) = -\xi \neq 0$, $|\rho|(\bR) = |\xi^{-1}|<1$ and $\widehat{\rho}(z) = -\xi^{-1} \re^{\ri z} = \widehat{\beta}(z) / \widehat{\alpha}(z)$, and the claim follows from Proposition \ref{prop-cuppens2}, since $\rho^{\ast m}=(-1)^m \xi^{-m} \delta_m$.
\end{proof}

We can now characterise when a distribution concentrated on $\{0,1,\ldots, n\}$ is quasi-infinitely divisible.

\begin{thm} \label{thm-finite}
Let $\mu$ be a discrete distribution concentrated on $\{0,1,2, \ldots, n\}$ for some $n\in \bN$, i.e. $\mu = \sum_{j=0}^n a_j \delta_j$, where $a_0, \ldots, a_{n-1} \geq 0$, $a_n>0$ and $a_0 + \ldots + a_n = 1$. Then the following are equivalent:
\begin{enumerate}
\item[(i)] $\mu$ is quasi-infinitely divisible.
\item[(ii)] The characteristic function of $\mu$ has no zeroes.
\item[(iii)] The polynomial $w \mapsto \sum_{j=0}^n a_j w^j$ in the complex variable $w$ has no roots on the unit circle, i.e. $\sum_{j=0}^n a_j w^j \neq 0$ for all $w\in \bC$ with $|w|=1$.
\end{enumerate}
Further, if one of the equivalent conditions (i) -- (iii) holds, then the quasi-L\'evy measure of $\mu$ is finite and concentrated on $\bZ$, the drift lies in $\{0,1\ldots, n\}$ and the Gaussian variance of $\mu$ is 0. More precisely, if $\xi_1,\ldots, \xi_n$ denote the $n$ complex roots of $w\mapsto \sum_{j=0}^n a_j w^j$, counted with multiplicity, then the quasi-L\'evy measure of $\mu$ is given by
\begin{equation} \label{eq-qlm1}
\nu = -  \sum_{m=1}^\infty m^{-1} \left( \sum_{j\cl |\xi_j|<1} \xi_j^m\right) \delta_{-m} -
\sum_{m=1}^\infty m^{-1} \left( \sum_{j\cl |\xi_j|>1} \xi_j^{-m} \right)\delta_m
\end{equation}
and the drift is equal to the number of zeroes of this polynomial that lie inside the unit circle (counted with multiplicity), i.e. have modulus less than $1$.
\end{thm}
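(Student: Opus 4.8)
The plan is to prove the cycle (i) $\Rightarrow$ (ii) $\Rightarrow$ (iii) $\Rightarrow$ (i), with the explicit description of the quasi-L\'evy measure, drift and Gaussian variance emerging from the proof of the last implication. Write $P(w)=\sum_{j=0}^n a_j w^j$, so that $\widehat\mu(z)=\sum_{j=0}^n a_j\re^{\ri jz}=P(\re^{\ri z})$. The implication (i) $\Rightarrow$ (ii) is immediate, since the characteristic function of any quasi-infinitely divisible distribution is zero-free, as noted after Definition \ref{def-LM3}. For (ii) $\Leftrightarrow$ (iii) one only observes that $z\mapsto\re^{\ri z}$ maps $\bR$ onto the unit circle, so $\widehat\mu$ has a zero on $\bR$ if and only if $P$ has a root of modulus $1$.

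For (iii) $\Rightarrow$ (i): since $a_n>0$, $P$ has degree $n$ and factors as $P(w)=a_n\prod_{k=1}^n(w-\xi_k)$, where $\xi_1,\dots,\xi_n$ are its roots counted with multiplicity, none of modulus $1$ by (iii). Evaluating at $w=1$ yields the key normalization $a_n\prod_{k=1}^n(1-\xi_k)=P(1)=\sum_{j=0}^n a_j=1$. Since $\widehat\mu(z)=a_n\prod_{k=1}^n(\re^{\ri z}-\xi_k)$ and $\re^{\ri z}-\xi_k$ is the characteristic function of the complex measure $\delta_1-\xi_k\delta_0$, I would apply Lemma \ref{lem-finite} to each factor — the case $|\xi_k|<1$ contributing, up to the constant $1-\xi_k$, the factor $\exp(\ri z-\sum_{m\ge1}m^{-1}\xi_k^m(\re^{-\ri mz}-1))$, and the case $|\xi_k|>1$ contributing, up to $1-\xi_k$, the factor $\exp(-\sum_{m\ge1}m^{-1}\xi_k^{-m}(\re^{\ri mz}-1))$ — and then multiply. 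The constants combine to $a_n\prod_k(1-\xi_k)=1$, and collecting exponents gives
$$\widehat\mu(z)=\exp\Bigl(\ri dz+\int_\bR(\re^{\ri zx}-1)\,\nu(\di x)\Bigr),\qquad z\in\bR,$$
where $d:=\#\{k:|\xi_k|<1\}$ and $\nu$ is the measure in \eqref{eq-qlm1}. Interchanging the finite sum over $k$, the series over $m$, and the integral is legitimate because each $m$-series converges geometrically and $|\nu|$ is finite, as checked next.

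It remains to verify that the last display is a genuine quasi-infinitely divisible representation and to read off the triplet. As $P$ has real coefficients, its non-real roots occur in conjugate pairs, so $\sum_{j:|\xi_j|<1}\xi_j^m$ and $\sum_{j:|\xi_j|>1}\xi_j^{-m}$ are real for every $m$; thus $\nu$ is a real signed measure carried by $\bZ\setminus\{0\}$. Setting $t:=\max\bigl(\{|\xi_j|:|\xi_j|<1\}\cup\{|\xi_j|^{-1}:|\xi_j|>1\}\bigr)<1$, one has $|\nu|(\{m\})\le n|m|^{-1}t^{|m|}$ for $m\in\bZ\setminus\{0\}$, whence $|\nu|$ is a finite measure; in particular $\int_\bR(1\wedge x^2)\,|\nu|(\di x)<\infty$ and $\int_{|x|<1}|x|\,|\nu|(\di x)=0$. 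Hence $\nu$ is a finite quasi-L\'evy type measure and the last display exhibits $\widehat\mu$ in the drift form of Remark \ref{rem-drift} with Gaussian variance $a=0$ and drift $d$; since its exponent is continuous in $z$ and vanishes at $z=0$, it is the characteristic exponent of $\mu$. Therefore $\mu$ is quasi-infinitely divisible, its quasi-L\'evy measure is the finite measure \eqref{eq-qlm1} concentrated on $\bZ$, its Gaussian variance is $0$, and its drift is $d\in\{0,1,\dots,n\}$, which by construction equals the number of roots of $P$ inside the unit circle counted with multiplicity. This closes the cycle. I expect no serious obstacle here: the only delicate points are the bookkeeping in forming the product of the $n$ exponentials from Lemma \ref{lem-finite}, the interchange of summation and integration, and the observation that conjugate-pairing of the roots is what makes $\nu$ real-valued.
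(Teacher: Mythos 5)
Your proposal is correct and follows essentially the same route as the paper: factor $P(w)=a_n\prod_k(w-\xi_k)$, use $P(1)=1$ for the normalization, apply Lemma \ref{lem-finite} to each factor $\delta_1-\xi_k\delta_0$, and invoke the conjugate pairing of roots to see that $\nu$ in \eqref{eq-qlm1} is a real (and, by geometric decay, finite) signed measure concentrated on $\bZ\setminus\{0\}$, yielding the drift form with drift $d=\#\{k:|\xi_k|<1\}$ and Gaussian variance $0$. The only cosmetic differences are that you prove the equivalence (ii) $\Leftrightarrow$ (iii) directly and spell out the finiteness bound $|\nu|(\{m\})\le n|m|^{-1}t^{|m|}$, which the paper leaves implicit.
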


\begin{proof}
Define the polynomial $f$ in $w$ by
$$f(w) := a_0 + a_1 w + \ldots + a_n w^n = a_n \left( w^n + \frac{a_{n-1}}{a_n} w^{n-1} + \ldots + \frac{a_1}{a_n} w + \frac{a_0}{a_n} \right).$$
Denoting by $\xi_1,\ldots, \xi_n$ the complex roots of $f$, counted with multiplicity, we can write
$$f(w) = a_n \prod_{j=1}^n (w-\xi_j).$$
The characteristic function of $\mu$ can be expressed as
\begin{equation} \label{eq-finite2}
\widehat{\mu}(z) = \sum_{j=0}^n a_j \re^{\ri j z } = f(\re^{\ri z}) = a_n \prod_{j=1}^n \left( \re^{\ri z} - \xi_j \right) = a_n \prod_{j=1}^n (\delta_1 - \xi_j \delta_0) \, \widehat{ { }}\;\, (z).
\end{equation}

Now assume that (iii) holds, i.e. that $|\xi_j|\neq 1$ for all $j\in \{1,\ldots, n\}$. Define the complex measure ${\nu}$ by
 \eqref{eq-qlm1}.
  Since $f$ has real coefficients, the non-real roots of $f$ appear as pairs of complex conjugates, from which it follows that ${\nu}$ is actually a finite signed measure. Denote by $\lambda$ the number of indices $j\in \{1,\ldots, n\}$ for which $|\xi_j| <1$. From Equation \eqref{eq-finite2} and Lemma \ref{lem-finite} we then obtain
$$\widehat{\mu}(z) = a_n \left(\prod_{j=1}^n (1-\xi_j)\right) \exp \left( \ri \lambda z + \int_\bR (\re^{\ri z x} - 1) \, \nu(\di x) \right),$$ which shows that $\mu$ is quasi-infinitely divisible with finite quasi-L\'evy measure $\nu$ and drift $\lambda$, since
$$a_n \prod_{j=1}^n (1-\xi_j) = f(1) = a_0 + \ldots + a_n = 1.$$ We have shown that (iii) implies (i) and given the specific form of the triplet. That (i) implies (ii) is obvious, and that (ii) implies (iii) can be seen from \eqref{eq-finite2}, since $\widehat{\mu}(z) \neq 0$ for all $z\in \bR$ implies $|\xi_j| \neq 1$ for all $j\in \{1,\ldots, n\}$.
\end{proof}

Later in Theorem \ref{thm-integers} we shall generalise Theorem \ref{thm-finite} and show that a distribution on the integers $\bZ$ is quasi-infinitely divisible if and only if its characteristic function has no zeroes. However, the proof of Theorem \ref{thm-integers} is on the one hand more complicated as it relies on a consequence of the Wiener-L\'evy theorem for absolutely summable Fourier series, and on the other hand also needs the assertion of Theorem \ref{thm-finite} in order to show that the derived candidate for quasi-L\'evy measure is indeed
real-valued.

A simple consequence of Theorem \ref{thm-finite} is the following:

\begin{cor} \label{cor-finite3}
Let $\mu$ be a discrete distribution concentrated on a finite subset of a  lattice of the form $r + h \bZ$ with $r\in \bR$ and $h>0$. Then $\mu$ is quasi-infinitely divisible if and only if its characteristic function has no zeroes. In this case, the quasi-L\'evy measure of $\mu$ is finite and the Gaussian variance is $0$.
\end{cor}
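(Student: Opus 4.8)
The plan is to reduce Corollary~\ref{cor-finite3} to Theorem~\ref{thm-finite} by an affine change of variable together with the shift-and-dilation invariance of quasi-infinite divisibility recorded in Remark~\ref{rem-conv}~(b). Write $\mu = \law(X)$ where $X$ takes values in the finite set $F \subset r + h\bZ$. Then $Y := (X - r)/h$ takes values in a finite subset of $\bN_0$, so after a further shift $Y' := Y - \min(\supp Y)$ we obtain a random variable taking values in $\{0,1,\ldots,n\}$ for some $n\in\bN$ with positive mass at $0$ (and positive mass at $n$), i.e. $\law(Y')$ is of exactly the type covered by Theorem~\ref{thm-finite}. The key point is that each of the three operations $X\mapsto X-r$, $X\mapsto X/h$, $Y\mapsto Y-\min(\supp Y)$ is an affine map with nonzero multiplier, so by Remark~\ref{rem-conv}~(b), $\mu$ is quasi-infinitely divisible if and only if $\law(Y')$ is.

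The second ingredient is that the zero-set condition on the characteristic function is also invariant under these affine maps. Indeed, if $Z = mW + b$ with $m\neq 0$, then $\widehat{\law(Z)}(z) = \re^{\ri b z}\,\widehat{\law(W)}(mz)$, which vanishes at some $z\in\bR$ if and only if $\widehat{\law(W)}$ vanishes somewhere on $\bR$. Chaining these through the three maps shows that $\widehat{\mu}$ has no zeroes on $\bR$ if and only if $\widehat{\law(Y')}$ has no zeroes on $\bR$. Combining with the previous paragraph and the equivalence (i)$\Leftrightarrow$(ii) of Theorem~\ref{thm-finite} applied to $\law(Y')$ gives the stated equivalence for $\mu$.

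For the last sentence, suppose the equivalent conditions hold, so $\law(Y')$ is quasi-infinitely divisible; by Theorem~\ref{thm-finite} its quasi-L\'evy measure is finite and its Gaussian variance is $0$. Now I would track how the quasi-L\'evy measure and Gaussian variance transform under $Y'\mapsto X = h Y' + (r + h\min(\supp Y))$: by the explicit formulas in Remark~\ref{rem-conv}~(b), the Gaussian variance of $\mu$ is $h^2$ times that of $\law(Y')$, hence $0$, and the quasi-L\'evy measure of $\mu$ is the pushforward $\nu_{\mu}(B) = \nu_{\law(Y')}(h^{-1}(B - (r+h\min\supp Y)))$ (on $\cB_0$), which is finite because pushforward under a bijective affine map preserves total mass. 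This finishes the proof.

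The arguments here are all routine bookkeeping with affine transformations; I do not anticipate a genuine obstacle, but the one point that needs a little care is making sure the intermediate distribution really satisfies the hypotheses of Theorem~\ref{thm-finite} verbatim (concentrated on $\{0,\ldots,n\}$, mass at $0$ allowed to be $0$ but mass at $n$ strictly positive, total mass $1$) — this is exactly why the shift by $\min(\supp Y)$ is needed rather than working directly with $Y$, and why the degenerate case $\#F = 1$ (where $\mu$ is a Dirac measure, trivially infinitely divisible with no zeroes) should be noted separately or absorbed by allowing $n$ chosen so that $a_n > 0$.
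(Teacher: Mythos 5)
Your proposal is correct and follows essentially the same route as the paper: the paper also reduces to Theorem \ref{thm-finite} by passing to $Y = h^{-1}(X-r)+k$ concentrated on $\{0,\ldots,n\}$ and then transferring quasi-infinite divisibility, the finiteness of the quasi-L\'evy measure and the vanishing Gaussian variance back to $\mu$ via Remark \ref{rem-conv}~(b). Your extra bookkeeping (invariance of the zero-set condition under affine maps, choosing $n$ with $a_n>0$, the degenerate one-point case) is exactly the detail the paper leaves implicit.
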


\begin{proof}
If the characteristic function of $\mu$ has zeroes it is clear that $\mu$ cannot be quasi-infinitely divisible.
Now suppose that $\widehat{\mu}$ has no zeroes.
Let $X$ be a random variable with distribution $\mu$. We then can find $k\in \bZ$ and $n\in \bN$ such that $Y= h^{-1} (X-r) +k$ is concentrated on $\{0, \ldots, n\}$. Then the characteristic function of $Y$ has no zeroes, hence $\law(Y)$ is quasi-infinitely divisible with Gaussian variance 0 and finite quasi-L\'evy measure by Theorem \ref{thm-finite}. The claim then follows from Remark \ref{rem-conv}~(b).
\end{proof}

So far, for all quasi-infinitely divisible distributions we encountered, the negative part $\nu^-$ of the quasi-L\'evy measure was finite. Next, we give an example of  quasi-infinitely divisible distributions with $\nu^-$ being infinite.

\begin{ex} \label{ex-inf-qid-measure}
Let $(X_k)_{k\in \bN}$ be an independent and identically distributed sequence of random variables with common distribution $(2/3) \delta_{-1} + (1/3) \delta_2$, and let $(b_k)_{k\in \bN}$ be a sequence of strictly positive real numbers such that $\sum_{k=1}^\infty b_k^2 < \infty$. Since the $X_k$ have expectation 0, the series $Y:= \sum_{k=1}^\infty b_k X_k$ converges almost surely (e.g. Feller \cite[Thm.\ VII.8.2]{Feller2}) and hence in distribution, regardless if $(b_k)_{k\in \bN}$ is summable or not. We claim that $Y$ is quasi-infinitely divisible with Gaussian variance 0, center 0 and quasi-L\'evy measure $\nu$ given by
\begin{equation} \label{eq-nu-infinite}
\nu = \sum_{k=1}^\infty \sum_{m=1}^\infty \frac{1}{m} (-1)^{m+1} 2^{-m} \delta_{3b_k m}.
\end{equation}
To see this, observe first that
$$\int_{\bR} (1 \wedge x^2) \, |\nu|(\di x) \leq \int_\bR x^2 \, |\nu|(\di x) \leq \sum_{k=1}^\infty \sum_{m=1}^\infty \frac{1}{m} 2^{-m} 9 b_k^2 m^2 < \infty.$$
Since $|\re^{\ri x z} - 1 - \ri x z| \leq x^2 z^2 /2$ and since
$$\int_\bR x^2 \sum_{k=n}^\infty \sum_{m=1}^\infty \frac{1}{m} 2^{-m} \delta_{3b_k m}(\di x) \leq 9
\sum_{k=n}^\infty b_k^2 \sum_{m=1}^\infty m 2^{-m} \to 0 \quad \mbox{as}\; n\to\infty,$$
it follows that for each $z\in \bR$,
\begin{eqnarray} \label{eq-nu-infinite2}
\lefteqn{\exp \left( \int_\bR \left( \re^{\ri x z} - 1 - \ri x z \right) \sum_{k=1}^n \sum_{m=1}^\infty m^{-1} (-1)^{m+1} 2^{-m} \delta_{3b_k m}(\di x) \right)} \\
\nonumber & \to & \exp \left( \int_\bR \left(\re^{\ri z x} - 1 - \ri x z \right)\, \nu (\di x)\right) \quad \mbox{as $n\to \infty$.}
\end{eqnarray}
By Theorem \ref{thm-cuppens}, $\law(b_k X_k) = (2/3) \delta_{-b_k} + (1/3) \delta_{2b_k}$ is quasi-infinitely divisible with Gaussian variance 0, quasi-L\'evy measure $\nu_{b_k} = \sum_{m=1}^\infty m^{-1} (-1)^{m+1} 2^{-m} \delta_{3b_k m}$ and drift $-b_k$. Since $\int_\bR x \, \nu_{b_k} (\di x) = b_k$, this implies that the center of $b_k X_k$ is 0 (alternatively, one can use Theorem \ref{thm-moments1} to be proved later). Hence the left-hand side of \eqref{eq-nu-infinite2} is the characteristic function of $\sum_{k=1}^n b_k X_k$. It follows that the right-hand side of \eqref{eq-nu-infinite2} is the characteristic function of $Y$, and that $Y$ is quasi-infinitely divisible with center 0, Gaussian variance 0 and quasi-L\'evy measure $\nu$.\\
Now suppose that the sequence $(b_k)_{k\in \bN}$ is additionally linearly independent over $\bQ$. Then there are no cancellations in the representation \eqref{eq-nu-infinite} of $\nu$ and $\nu^- = \sum_{k\in \bN} \sum_{m\in \bN, m \; {\rm even}} m^{-1} 2^{-m} \delta_{3b_k m}$. Then obviously $\nu^-(\bR) = \infty$ and $\int_0^{\infty} x^2 \nu^- (\di x)<\infty$.
For $\alpha \in (0,2]$ we have $\int_{(0,1)} x^\alpha \, \nu^-(\di x) < \infty$ if and only if $\sum_{k\in \bN} b_k^\alpha < \infty$. This gives various examples of quasi-infinitely divisible distributions with infinite negative part of the quasi-L\'evy measure and prescribed integrability conditions of the quasi-L\'evy measure around 0.
\end{ex}

So far we have identified various quasi-infinitely divisible distributions and given examples of distributions that are not quasi-infinitely divisible.  Cuppens \cite[Thm.~4.3.4]{Cuppens1975} shows that $(0,\nu,\gamma)_c$, where $\nu$ is a finite quasi-L\'evy type measure,  is the characteristic triplet of a quasi-infinitely divisible distribution if and only if $\exp (\nu) := \sum_{n=0}^\infty (1/n!) \nu^{\ast n}$ is a measure. However, it is in general difficult to check if the exponential of a finite signed measure is a measure.  In
\cite[Sect. 5]{Cuppens1969}, Cuppens raised the question of characterising all quasi-infinitely divisible distributions with Gaussian variance zero and finite quasi-L\'evy measure. We do not provide an answer to this question, but at least characterise in Theorem \ref{thm-integers3} (in combination with Theorem \ref{thm-integers}) all quasi-infinitely divisible distributions with zero Gaussian variance and quasi-L\'evy measure being concentrated on $\bZ$.

Finally, we mention that, using P\'olya's theorem employed in Example \ref{ex-linnik}, we can construct further (symmetric) quasi-infinitely divisible distributions:

\begin{ex} \label{ex-Polya}
Let $\nu_1 \cl \mcal B_0\to \R$ be a quasi-L\'evy type measure such that
$\int_{\bR}  (x^2\vee |x|)  \, |\nu_1|(\di x) < \infty$.  Suppose that $\nu_1$ is symmetric
(i.e.\ $\nu_1(B)=\nu_1(-B)$ for $\forall B\in\mcal B_0$). Let $\nu_2(\di x) = \pi^{-1} x^{-2}\, \di x$, the L\'evy measure
of the standard Cauchy distribution, and $c(x) = x \mathbf{1}_{[-1,1]}(x)$.
We claim that then $(a,\nu_1 + \lambda \nu_2, \gamma)_c$ is the characteristic triplet of some
quasi-infinitely divisible distribution whenever $a\geq 0$, $\gamma \in \bR$ and $\lambda > 0$ is sufficiently large.
To see this, it is obviously sufficient to consider the case $\gamma=0$. Let
$$h(z) = \int_\bR (\re^{\ri x z} - 1 - \ri x z \mathbf{1}_{[-1,1]}(x)) \, \nu_1(\di x), \quad z\in \bR.$$
By symmetry of $\nu_1$, $h$ is real-valued, even, continuous and $h(0) = 0$. Using dominated convergence and the integrability condition on $|\nu_1|$, $h$ is twice differentiable with derivatives
$$h'(z) = \int_\bR \ri x (\re^{\ri x z} - \mathbf{1}_{[-1,1]}(x)) \, \nu_1(\di x) \quad \mbox{and} \quad h''(z) = - \int_\bR x^2 \, \re^{\ri x z} \, \nu_1(\di x), \quad z\in \bR,$$
so that $h'$ and $h''$ are bounded. Further, $h(z) = O(z)$ as $z\to \infty$ by Lemma 43.11~(ii) in \cite{Sa},
applied to $\nu_1^+$ and $\nu_1^-$ separately. Let
$$
\varphi_\lambda(z) = \exp (-\lambda |z| + h(z)).
$$
An application of P\'olya's theorem in the form of \cite[Cor.\ 2 to Thm.\ 1.2.2]{Lukacs} shows that $\ph_{\ld} (z)$ is
the characteristic function of a probability distribution for sufficiently large $\lambda>0$. Hence
$$\exp \left( - a z^2/2 + \int_\bR (\re^{\ri x z} - 1- \ri x z \mathbf{1}_{[-1,1]}(x)) \, (\nu_1 + \lambda \nu_2)(\di x) \right) = \re^{-a z^2/2} \varphi_{\lambda} (z)$$
is the characteristic function of a probability distribution for large enough $\lambda$. This example shows in particular that for every symmetric and singular (with respect to Lebesgue measure) measure $\rho$ on $\bR$ with $\int_{\R} (x^2 \vee |x|) \, \rho(\di x) < \infty$,
there exists a quasi-infinitely divisible distribution with Gaussian variance 0 and quasi-L\'evy measure $\nu$ such that $\nu^- = \rho$.
\end{ex}


\section{Convergence of quasi-infinitely divisible distributions} \label{S4}

In this section we study weak convergence of a sequence of quasi-infinitely divisible distributions.
Recall that a sequence $(\mu_n)_{n\in \bN}$ of probability measures on $\bR$ converges weakly to a probability measure $\mu$, if
\begin{equation} \label{eq-weak-conv}
\lim_{n\to\infty} \int_\bR f(x) \, \mu_n(\di x) = \int_\bR f(x) \, \mu(\di x), \qquad \forall\, f\in C_b(\bR; \bR),
\end{equation}
where $C_b(\bR; \bR)$ denotes the class of real-valued bounded continuous functions on $\bR$. Recall that the class of infinitely divisible distributions is closed under weak convergence, see e.g.\ \cite[Lem.~7.8]{Sa}. In contrast, it is easy to see that the class of quasi-infinitely divisible distributions is \emph{not} closed under weak convergence. For example,  $b(1,p)$ is quasi-infinitely divisible if and only if $p\neq 1/2$ by Example \ref{ex-bin}, and by letting $p\to 1/2$ we can represent the non-quasi-infinitely divisible distribution $b(1,1/2)$ as a weak limit of quasi-infinitely divisible distributions. By applying Corollary \ref{cor-finite3} we can show even more, namely that the class of quasi-infinitely divisible distributions is dense in the class of distributions.

\begin{thm} \label{thm-dense}
The class of quasi-infinitely divisible distributions on $\bR$ with finite quasi-L\'evy measure and zero Gaussian variance
is dense in the class of probability distributions on $\bR$ with respect to weak convergence.
\end{thm}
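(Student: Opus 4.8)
The plan is to reduce everything to Corollary~\ref{cor-finite3}, which asserts that a probability distribution concentrated on a \emph{finite} subset of a lattice $r+h\bZ$ whose characteristic function has no zeroes is automatically quasi-infinitely divisible with finite quasi-L\'evy measure and Gaussian variance $0$. Since a dense subset of a dense subset is dense, it is enough to prove two approximation facts: first, that every probability distribution on $\bR$ is a weak limit of distributions concentrated on finite subsets of lattices; and second, that every distribution concentrated on a finite subset of a lattice is a weak limit of such distributions whose characteristic functions have no zeroes. Combining these with Corollary~\ref{cor-finite3} then yields the theorem.

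The first fact is routine. Given $\mu$, the conditional distributions $\mu_M := \mu(\,\cdot\,\cap[-M,M])/\mu([-M,M])$ (for $M$ large enough that $\mu([-M,M])>0$) are compactly supported and converge weakly to $\mu$ as $M\to\infty$, so we may assume $\mu$ is compactly supported. Then, for $h>0$, set $\sigma_h := \sum_{k\in\bZ}\mu([kh,(k+1)h))\,\delta_{kh}$; this is concentrated on a finite subset of $h\bZ$, and $\sigma_h\to\mu$ weakly as $h\downarrow 0$ because every bounded continuous function is uniformly continuous on a bounded neighbourhood of $\supp \mu$.

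For the second fact, write a distribution $\sigma$ concentrated on a finite subset of $r+h\bZ$ as $\sigma=\sum_{j=1}^m a_j\delta_{x_j}$ with $a_j>0$ and $x_1<\dots<x_m$ in $r+h\bZ$. If $m=1$ then $\sigma$ is a Dirac measure, hence infinitely divisible, and nothing is to be shown; so assume $m\geq 2$. For $s\in(0,1)$ put $\sigma_s := (1-s)\sigma + s\,\delta_{x_1}$, which is again concentrated on $\{x_1,\dots,x_m\}\subset r+h\bZ$ and satisfies $\sigma_s\to\sigma$ weakly as $s\downarrow 0$. From $\widehat{\sigma_s}(z)=\re^{\ri x_1 z}\bigl((1-s)\,\re^{-\ri x_1 z}\widehat{\sigma}(z)+s\bigr)$ we see that a zero of $\widehat{\sigma_s}$ forces $\re^{-\ri x_1 z}\widehat{\sigma}(z)=-s/(1-s)$, and in particular $\Im\bigl(\re^{-\ri x_1 z}\widehat{\sigma}(z)\bigr)=\sum_{j=2}^m a_j\sin\bigl((x_j-x_1)z\bigr)=0$. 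Since $x_j-x_1\in h\bZ$ and $m\geq 2$, the last expression is a nonzero trigonometric polynomial of period $2\pi/h$, hence has only finitely many zeroes in each period; consequently $\re^{-\ri x_1 z}\widehat{\sigma}(z)$ is real only on a discrete $2\pi/h$-periodic set, on which it assumes only finitely many real values, forming a finite set $V$. Therefore $\widehat{\sigma_s}$ has no zeroes whenever $-s/(1-s)\notin V$, which rules out only finitely many $s\in(0,1)$ because $s\mapsto -s/(1-s)$ is injective on $(0,1)$; choosing $s\downarrow 0$ among the remaining parameters and invoking Corollary~\ref{cor-finite3}, each such $\sigma_s$ is quasi-infinitely divisible with finite quasi-L\'evy measure and Gaussian variance $0$, which proves the second fact.

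The only substantive point is the zero-free perturbation in the second step; the remaining ingredients are elementary. The items to check carefully are that $\sigma_s$ does not leave the lattice (it stays supported on $\{x_1,\dots,x_m\}$), that the displayed trigonometric polynomial really has period $2\pi/h$ because each $x_j-x_1$ is an integer multiple of $h$, and that the excluded set of parameters $s$ is finite; none of these raises a genuine difficulty.
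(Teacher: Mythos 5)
Your proposal is correct, and it reaches the result by a genuinely different route in the key step. Like the paper, you reduce everything to Corollary~\ref{cor-finite3} after discretising onto a finite lattice set (your truncation to $[-M,M]$ followed by binning onto $h\bZ$ is only cosmetically different from the paper's direct binning onto $\{-n+j/n\}$), but the zero-removal step differs. The paper shifts the lattice distribution to $\{0,\dots,n\}$, factorises the associated polynomial $\sum_j a_j w^j$, moves each root $\xi_j$ to $\xi_j+h$, and checks that for small $h>0$ the perturbed polynomial has real, strictly positive coefficients (which is why the paper first perturbs so that all atoms have positive mass) and no roots on the unit circle; renormalising gives the approximants. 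You instead mix with a Dirac mass, $\sigma_s=(1-s)\sigma+s\delta_{x_1}$, and observe that a zero of $\widehat{\sigma_s}$ forces $\re^{-\ri x_1 z}\widehat{\sigma}(z)=-s/(1-s)$, which can only happen on the zero set of the nonzero trigonometric polynomial $\sum_{j\ge 2}a_j\sin((x_j-x_1)z)$; by $2\pi/h$-periodicity this confines the possible values to a finite set $V$, so only finitely many parameters $s$ are excluded. Your argument is more elementary: it avoids factorising the polynomial and the bookkeeping needed to keep the perturbed coefficients real and positive, and it does not need the preliminary step of making all lattice masses strictly positive. What it does not give, and the paper's root-perturbation does, is a device that is reused later: the same construction \eqref{eq-modified} reappears in the proof of Theorem~\ref{thm-integers} to show the Fourier coefficients $b_n$ are real, so the paper's choice has a payoff beyond this theorem. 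One minor point worth stating explicitly if you write this up: chaining the two approximation facts uses that weak convergence of probability measures on $\bR$ is metrizable (e.g.\ by the L\'evy--Prokhorov metric), so a diagonal extraction is legitimate; this is standard and the paper glosses over the analogous point as well.
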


\begin{proof}
Let $\mu$ be a probability distribution. For $n\in \bN$ let $b_{j,n} = -n + j/n$, $j\in \{0, \ldots, 2n^2\}$ and define the discrete distribution $\mu_n$, concentrated on the lattice\linebreak
$\{b_{0,n}, b_{1,n}, \ldots, b_{2n^2,n}\}$ by
$$\mu_n ( \{ b_{j,n} \})  = \begin{cases} \mu( (-\infty, b_{0,n}] ), & j = 0, \\
\mu (b_{j-1,n} , b_{j,n}]), & j=1, \ldots, 2n^2-1, \\
\mu ( (b_{2n^2-1,n}, \infty)), & j=2n^2.
\end{cases}$$
Then $$\mu_n ((-\infty, b_{j,n}]) = \mu ((-\infty, b_{j,n}]), \quad j\in \{0,\ldots, 2n^2-1\},$$
and from this it follows easily that $\mu_n((-\infty,x])$ converges to $\mu((-\infty,x])$ as $n\to\infty$ at every continuity point $x$ of the distribution function of $\mu$. Hence $\mu_n \stackrel{w}{\to} \mu$ as $n\to\infty$. It hence suffices to show that every distribution $\mu_n$ is a weak limit of quasi-infinitely divisible distributions with finite quasi-L\'evy measure and Gaussian variance 0. To see this, observe first that every distribution concentrated on $\{b_{0,n}, \ldots, b_{2n^2,n}\}$ can arbitrarily well be approximated by distributions $\sigma$ concentrated on $\{b_{0,n}, \ldots, b_{2n^2,n}\}$ such that $\sigma (\{b_{j,n}\}) > 0$ for all $j\in \{0,\ldots, 2n^2\}$. Hence, we may restrict attention to such distributions $\sigma$.
If the characteristic function of $\sigma$ has no zeroes, then $\sigma$ will be quasi-infinitely divisible with finite quasi-L\'evy measure by Corollary \ref{cor-finite3} and we are done. So suppose that $\widehat{\sigma}$ has zeroes.
Let $X$ be a random variable with distribution $\sigma$ and define $Y = nX+n^2$. Then $Y$ is concentrated on $\{0, 1, \ldots, 2n^2\}$ with
masses $a_j = P(Y=j) > 0$ for $j=0,\ldots, 2n^2$, and its characteristic function has zeroes.
Then the polynomial $f(w) = \sum_{j=0}^{2n^2} a_j w^j$ has zeroes on the unit circle. Factorising we can write $f(w) = a_{2n^2} \prod_{j=1}^{2n^2} (w- \xi_j)$. Now let
\begin{equation} \label{eq-modified}
f_h(w) = a_{2n^2} \prod_{j=1}^{2n^2} (w-\xi_j - h), \quad w\in \bC,
\end{equation}
for $h>0$. Then $f_h$ will not have zeroes on the unit circle for small enough $h$, and since the non-real zeroes of $f$ appear in pairs of complex conjugates, $f_h$ is a polynomial with real coefficients, say $f_h (w) = \sum_{j=0}^{2n^2} \alpha_{h,j} w^j$ with $\alpha_{h,j} \in \bR$. For small enough $h$, $\alpha_{h,j}$ will be close to $a_j$ which is strictly positive, hence also $\alpha_{h,j} > 0$. Now let $Z_h$ be a random variable with distribution $\sigma_h = \left(\sum_{j=0}^{2n^2} \alpha_{h,j} \right)^{-1} \sum_{j=0}^{2n^2}\alpha_{h,j} \delta_j$, and define $X_h = n^{-1} (Z_h -n^2)$. Then the characteristic function of $X_h$ has no zeroes for small enough $h$, and $X_h$ converges in distribution to $X$ as $h\downarrow 0$. Since $X_h$ is quasi-infinitely divisible with finite quasi-L\'evy measure and Gaussian variance 0 by Corollary \ref{cor-finite3}, the claim follows.
\end{proof}

Since the class of quasi-infinitely divisible distributions is not closed but dense, a handy characterisation of weak convergence of quasi-infinitely divisible distributions in terms of the characteristic triplet seems hard. Nevertheless, we aim at giving some easy sufficient and some necessary conditions in terms of the characteristic pair.
We say that a sequence $(\mu_n)_{n\in \bN}$ of  \emph{finite signed measures} on $\bR$ \emph{converges weakly} to a finite signed measure $\mu$ on $\bR$, if \eqref{eq-weak-conv} holds, and we denote this by $\mu_n \stackrel{w}{\to} \mu$; observe that also other (non-equivalent) definitions of weak convergence of signed measures can be found in the literature, see e.g. Section 2.6 in Cuppens \cite{Cuppens1975}, but we use this notion as for example done in Bogachev \cite[Def. 8.1.1]{Bogachev2}. The sequence $(\mu_n)_{n\in \bN}$ of finite signed measures is \emph{uniformly bounded}, if $(|\mu_n|)_{n\in \bN}$ is uniformly bounded, i.e. if
$$\sup_{n\in \bN} |\mu_n|(\bR) < \infty.$$
Finally, $(\mu_n)_{n\in \bN}$ is \emph{tight} if $(|\mu_n|)_{n\in \bN}$ is tight, i.e. if for every $\varepsilon > 0$ there exists some compact set $K\subset \bR$ such that
$$\sup_{n\in \bN} |\mu_n|(\bR \setminus K) \leq \varepsilon.$$
A weakly convergent sequence of finite signed measures must necessarily be uniformly bounded and  tight, see
Bogachev \cite[Thm.\ 8.6.2]{Bogachev2}.

Weak convergence of infinitely divisible distributions can be described by convergence properties of characteristic triplets as in \cite[Thm. 8.7]{Sa}, but in dimension 1 it is often easier to work with characteristic pairs. The following result, originally due to Gnedenko,
is found in Gnedenko and Kolmogorov \cite[Section 19, Thm.~1]{GK}.

\begin{thm} \label{thm-Gnedenko}
Let $c:\bR \to \bR$ be a fixed representation function that additionally is continuous, so that $g_c(\cdot , z)$ defined by \eqref{eq-def-g} is continuous for each fixed $z$. Let $(\mu_n)_{n\in  \bN}$ be a sequence of infinitely divisible distributions with characteristic pairs $(\zeta_n,\gamma_n)_c$. Then $(\mu_n)_{n\in \bN}$ converges weakly if and only if $(\zeta_n)_{n\in \bN}$ converges weakly to some finite measure $\zeta$ and $\gamma_n$ converges to some $\gamma\in \bR$. In that case, the weak limit $\mu$ is infinitely divisible and has
characteristic pair $(\zeta,\gamma)_c$.
\end{thm}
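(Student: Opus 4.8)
The plan is to prove the two implications separately; the ``if'' direction is routine, and in the ``only if'' direction the tightness of $(\zeta_n)_{n\in\bN}$ is the only real difficulty. For the ``if'' direction, suppose $\zeta_n\stackrel{w}{\to}\zeta$ for a finite measure $\zeta$ and $\gamma_n\to\gamma\in\bR$. Since $c$ is continuous, $g_c(\cdot,z)$ from \eqref{eq-def-g} is bounded and continuous on $\bR$ for each fixed $z$, so, splitting it into real and imaginary parts, weak convergence of finite measures gives $\int_\bR g_c(x,z)\,\zeta_n(\di x)\to\int_\bR g_c(x,z)\,\zeta(\di x)$ for every $z$; hence $\widehat{\mu_n}(z)=\exp(\ri\gamma_n z+\int_\bR g_c(x,z)\,\zeta_n(\di x))$ converges pointwise to $\exp(\ri\gamma z+\int_\bR g_c(x,z)\,\zeta(\di x))$. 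By the converse part of the L\'evy--Khintchine representation recalled in Section~1, the latter is the (continuous) characteristic function of the infinitely divisible distribution $\mu$ with characteristic pair $(\zeta,\gamma)_c$, so L\'evy's continuity theorem yields $\mu_n\stackrel{w}{\to}\mu$.

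For the ``only if'' direction, assume $\mu_n\stackrel{w}{\to}\mu$, so $\widehat{\mu_n}\to\widehat\mu$ uniformly on compact sets. Since the infinitely divisible distributions are closed under weak convergence (\cite[Lem.~7.8]{Sa}), $\mu$ is infinitely divisible, hence $\widehat\mu$ has no zeroes, and by a standard continuity property of the distinguished logarithm (cf.\ \cite{Sa}) one gets $\Psi_{\mu_n}\to\Psi_\mu$ uniformly on compact sets. To bound $\zeta_n(\bR)$ uniformly I would integrate $-\Real\Psi_{\mu_n}(z)=-\log|\widehat{\mu_n}(z)|\ge0$ over $z\in[-1,1]$: since the $\ri\gamma_n z$-term drops out of the real part, Tonelli gives $-\int_{-1}^1\Real\Psi_{\mu_n}(z)\,\di z=\int_\bR K(x)\,\zeta_n(\di x)$, where $K(x)=(2-2(\sin x)/x)/(1\wedge x^2)$ for $x\neq0$ and $K(0):=1/3$; the function $K$ is continuous, everywhere positive, and tends to $2$ at $\pm\infty$, so $\inf K>0$. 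Thus $\zeta_n(\bR)\le(\inf K)^{-1}\bigl(-\int_{-1}^1\Real\Psi_{\mu_n}(z)\,\di z\bigr)$, and the right-hand side converges to $-\int_{-1}^1\log|\widehat\mu(z)|\,\di z<\infty$; hence $\sup_n\zeta_n(\bR)<\infty$.

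The main obstacle is the tightness of $(\zeta_n)$, and the idea is to smooth $\Psi_{\mu_n}$ so as to obtain the Fourier transform of a genuine finite positive measure. For $h>0$, in the second difference $2\Psi_{\mu_n}(z)-\Psi_{\mu_n}(z+h)-\Psi_{\mu_n}(z-h)$ the drift and the truncation term cancel, leaving $\int_\bR\re^{\ri zx}\,\frac{2(1-\cos hx)}{1\wedge x^2}\,\zeta_n(\di x)$ (the kernel having value $h^2$ at $x=0$). Averaging this identity over $h\in(0,1)$ produces finite positive measures $\sigma_n(\di x)=2R(x)\,\zeta_n(\di x)$ with $R(x)=(1-(\sin x)/x)/(1\wedge x^2)$ --- so $R$ is continuous, bounded, and bounded below by a positive constant --- and $\widehat{\sigma_n}(z)=2\Psi_{\mu_n}(z)-\int_{z-1}^{z+1}\Psi_{\mu_n}(u)\,\di u$. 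By the locally uniform convergence of $\Psi_{\mu_n}$, $\widehat{\sigma_n}(z)\to\widehat\sigma(z):=2\Psi_\mu(z)-\int_{z-1}^{z+1}\Psi_\mu(u)\,\di u$ for every $z$, with $\widehat\sigma$ continuous, and $\sigma_n(\bR)=\widehat{\sigma_n}(0)\to\widehat\sigma(0)$. The elementary estimate $\sigma_n(\{|x|>2/\delta\})\le\delta^{-1}\int_{-\delta}^\delta(\sigma_n(\bR)-\Real\widehat{\sigma_n}(z))\,\di z$, combined with dominated convergence of the integrands (uniformly bounded by $2\sup_m\sigma_m(\bR)$), the continuity of $\widehat\sigma$ at $0$, and the tightness of each of the finitely many $\sigma_n$, then shows that $(\sigma_n)$ --- and hence $(\zeta_n)$, since $R$ is bounded below --- is tight.

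Being uniformly bounded and tight, $(\zeta_n)$ is weakly relatively compact (Prokhorov's theorem for finite measures). If $\zeta_{n_k}\stackrel{w}{\to}\zeta$ along a subsequence, then, as in the ``if'' direction, $\int_\bR g_c(x,z)\,\zeta_{n_k}(\di x)\to\int_\bR g_c(x,z)\,\zeta(\di x)$ for every $z$; comparing with $\Psi_{\mu_{n_k}}\to\Psi_\mu$ forces $\ri\gamma_{n_k}z\to\Psi_\mu(z)-\int_\bR g_c(x,z)\,\zeta(\di x)$ for every $z$, so (taking $z\neq0$) $\gamma_{n_k}$ converges to some $\gamma\in\bR$ and $\Psi_\mu(z)=\ri\gamma z+\int_\bR g_c(x,z)\,\zeta(\di x)$; that is, $(\zeta,\gamma)_c$ is the characteristic pair of $\mu$. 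By uniqueness of the characteristic pair, this $\zeta$ is the same for every weakly convergent subsequence, so by relative compactness $\zeta_n\stackrel{w}{\to}\zeta$; and then $\ri\gamma_n z=\Psi_{\mu_n}(z)-\int_\bR g_c(x,z)\,\zeta_n(\di x)\to\ri\gamma z$ for fixed $z\neq0$ gives $\gamma_n\to\gamma$. This also identifies the weak limit $\mu$ as infinitely divisible with characteristic pair $(\zeta,\gamma)_c$, completing the proof.
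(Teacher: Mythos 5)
Your proof is correct. Note first how it relates to the paper: the paper does not prove Theorem \ref{thm-Gnedenko} at all, but quotes it as a classical result from Gnedenko and Kolmogorov \cite{GK}; so what you have written is a self-contained proof of that cited theorem rather than a variant of an argument in the paper. Your ``if'' direction is exactly the argument the paper uses for the analogous part (a) of Theorem \ref{thm-conv} (boundedness and continuity of $g_c(\cdot,z)$ from \eqref{eq-def-g}, plus L\'evy's continuity theorem), while your ``only if'' direction is a clean rendering of the classical Gnedenko--Kolmogorov technique: locally uniform convergence of the distinguished logarithms $\Psi_{\mu_n}\to\Psi_\mu$ (via \cite[Lem.~7.7]{Sa}), then the second difference $2\Psi_{\mu_n}(z)-\Psi_{\mu_n}(z+h)-\Psi_{\mu_n}(z-h)$, in which both the drift term and the $c$-truncation term cancel, averaged over $h\in(0,1)$ to produce the positive measures $\sigma_n(\di x)=2R(x)\,\zeta_n(\di x)$ with $R$ bounded above and below by positive constants, so that the standard truncation inequality yields tightness and $\sigma_n(\bR)=\widehat{\sigma_n}(0)$ yields uniform boundedness; the subsequence argument together with uniqueness of the characteristic pair then identifies the limits of $\zeta_n$ and $\gamma_n$. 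All the individual steps check out (the kernel computations $\int_{-1}^1(1-\cos zx)\,\di z$ and $\int_0^1 2(1-\cos hx)\,\di h$, the positivity and the positive infima of $K$ and $R$, the Fubini interchanges with nonnegative or uniformly bounded integrands, and the reality of the limiting $\gamma$). The only cosmetic remark is that your separate paragraph bounding $\sup_n\zeta_n(\bR)$ via the kernel $K$ is redundant, since $\sup_n\sigma_n(\bR)<\infty$ already follows from $\widehat{\sigma_n}(0)\to\widehat\sigma(0)$ and $R$ being bounded below; this costs nothing but could be omitted.
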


As already mentioned, a similarly neat characterisation of weak convergence of quasi-infinitely divisible distributions is not to be expected, but at least we have the following result:

\begin{thm} \label{thm-conv}
Let $c$ be a continuous representation function and let $(\mu_n)_{n\in \bN}$ be  a sequence of quasi-infinitely divisible distributions with characteristic pairs $(\zeta_n,\gamma_n)_c$.\\
(a) Suppose that $\gamma_n$ converges to some $\gamma\in \bR$ and that $\zeta_n$ converges weakly to some finite signed measure $\zeta$ as $n\to \infty$. Then $\mu_n$ converges weakly to a quasi-infinitely divisible distribution $\mu$ with characteristic pair $(\zeta,\gamma)_c$.\\
(b) Suppose that $\mu_n$ converges weakly to some distribution $\mu$ as $n\to \infty$ and that $(\zeta_n^-)_{n\in \bN}$ is tight and uniformly bounded. Then $\mu$ is quasi-infinitely divisible, and if $(\zeta,\gamma)_c$ denotes the characteristic pair of $\mu$, then $\gamma_n \to \gamma$ and $\zeta_n \stackrel{w}{\to} \zeta$ as $n\to\infty$.\\
(c) If $(\mu_n)_{n\in \bN}$ is tight and $(\zeta_n^-)_{n\in \bN}$ is tight and uniformly bounded, then $(\gamma_n)_{n\in \bN}$ is bounded and $(\zeta_n^+)_{n\in \bN}$ as well as $(|\zeta_n|)_{n\in \bN }$ are tight and uniformly bounded.\\
(d) If $(\gamma_n)_{n\in \bN}$ is bounded and $(\zeta_n)_{n\in \bN}$ is tight and uniformly bounded, then $(\mu_n)_{n\in \bN}$ is tight.
\end{thm}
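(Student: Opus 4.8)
The plan is to reduce all four assertions to the convergence theory of infinitely divisible distributions (Theorem~\ref{thm-Gnedenko}) by means of the factorization introduced in Section~1: if $\mu_n\sim\qid(\zeta_n,\gamma_n)_c$, let $\mu_n^+$ and $\mu_n^-$ be the infinitely divisible distributions with characteristic pairs $(\zeta_n^+,\gamma_n)_c$ and $(\zeta_n^-,0)_c$, so that $\widehat{\mu_n^+}(z)=\widehat{\mu_n}(z)\,\widehat{\mu_n^-}(z)$, i.e.\ $\mu_n^+=\mu_n\ast\mu_n^-$, and since $\widehat{\mu_n^-}$ never vanishes also $\widehat{\mu_n}(z)=\widehat{\mu_n^+}(z)/\widehat{\mu_n^-}(z)$. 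The one auxiliary fact I would isolate first is the following consequence of Theorem~\ref{thm-Gnedenko} and Prokhorov's theorem: \emph{for a sequence $(\rho_n)$ of infinitely divisible distributions with characteristic pairs $(\eta_n,\beta_n)_c$, $(\rho_n)$ is tight if and only if $(\eta_n)$ is tight and uniformly bounded and $(\beta_n)$ is bounded.} Both directions are routine subsequence arguments: if $(\eta_n)$ is tight and uniformly bounded and $(\beta_n)$ bounded, every subsequence admits a further subsequence along which $\eta_{n_k}\stackrel{w}{\to}$ a finite measure and $\beta_{n_k}\to$ a real number, so $\rho_{n_k}$ converges weakly by Theorem~\ref{thm-Gnedenko}, whence $(\rho_n)$ is relatively compact, hence tight; conversely, if $(\rho_n)$ is tight, any weakly convergent sub-subsequence has $\eta_{n_k}\stackrel{w}{\to}$ a finite measure and $\beta_{n_k}$ convergent by Theorem~\ref{thm-Gnedenko}, and unboundedness or non-tightness of $(\eta_n)$, or unboundedness of $(\beta_n)$, would produce a subsequence contradicting this.

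For part (a): since $c$ is continuous, $g_c(\cdot,z)$ is a bounded continuous function on $\bR$ for each fixed $z$, so $\zeta_n\stackrel{w}{\to}\zeta$ gives $\int_\bR g_c(x,z)\,\zeta_n(\di x)\to\int_\bR g_c(x,z)\,\zeta(\di x)$ for every $z$, and with $\gamma_n\to\gamma$ this yields $\widehat{\mu_n}(z)\to\phi(z):=\exp(\ri\gamma z+\int_\bR g_c(x,z)\,\zeta(\di x))$ pointwise. Because $\sup_{|z|\le R}|g_c(x,z)|$ is bounded uniformly in $x\in\bR$ (using \eqref{eq-trunc1} near $0$ and the boundedness of $c$ away from $0$), dominated convergence shows $z\mapsto\int_\bR g_c(x,z)\,\zeta(\di x)$ is continuous; hence $\phi$ is continuous with $\phi(0)=1$, and L\'evy's continuity theorem gives $\mu_n\stackrel{w}{\to}\mu$ with $\widehat{\mu}=\phi$, so $\mu\sim\qid(\zeta,\gamma)_c$ by Definition~\ref{def1-qid}.

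Parts (b), (c) and (d) all run through the factorization and the auxiliary fact. In (d), $\zeta_n^\pm\le|\zeta_n|$, so $(\zeta_n^\pm)$ are tight and uniformly bounded and $(\gamma_n)$ is bounded, hence $(\mu_n^+)$ and $(\mu_n^-)$ are tight; along any sub-subsequence on which $\mu_{n_k}^+\stackrel{w}{\to}\rho^+$ and $\mu_{n_k}^-\stackrel{w}{\to}\rho^-$ (with $\widehat{\rho^-}$ non-vanishing), $\widehat{\mu_{n_k}}=\widehat{\mu_{n_k}^+}/\widehat{\mu_{n_k}^-}$ converges pointwise to the continuous function $\widehat{\rho^+}/\widehat{\rho^-}$, so $\mu_{n_k}$ converges weakly by L\'evy's theorem; thus $(\mu_n)$ is relatively compact, hence tight. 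In (c), $(\mu_n^-)$ is tight by the auxiliary fact, so $(\mu_n^+)=(\mu_n\ast\mu_n^-)$ is tight (tightness passes to convolutions, using the compact set $K_1+K_2$), and applying the auxiliary fact in the reverse direction to the infinitely divisible sequence $(\mu_n^+)$ gives that $(\zeta_n^+)$ is tight and uniformly bounded and $(\gamma_n)$ bounded; then $(|\zeta_n|)=(\zeta_n^++\zeta_n^-)$ is tight and uniformly bounded. In (b), $(\mu_n)$ is tight (it converges weakly) and $(\mu_n^-)$ is tight by the auxiliary fact, so $(\mu_n^+)$ is tight; given any subsequence, extract $n_k$ with $\mu_{n_k}^\pm$ both converging weakly, so by Theorem~\ref{thm-Gnedenko} $\zeta_{n_k}^\pm$ converge weakly and $\gamma_{n_k}$ converges, hence $\zeta_{n_k}=\zeta_{n_k}^+-\zeta_{n_k}^-$ converges weakly, and $\widehat{\mu}(z)=\lim_k\widehat{\mu_{n_k}}(z)=\widehat{\rho^+}(z)/\widehat{\rho^-}(z)$ is the exponential of the corresponding L\'evy--Khintchine expression, exhibiting $\mu$ as quasi-infinitely divisible; by uniqueness of the characteristic pair of $\mu$, every sub-subsequential limit of $(\zeta_n,\gamma_n)$ equals $(\zeta,\gamma)$, and the subsequence principle (applied to each bounded continuous test function for $\zeta_n$, and to $(\gamma_n)$ directly) upgrades this to $\gamma_n\to\gamma$ and $\zeta_n\stackrel{w}{\to}\zeta$.

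The step I expect to be the main obstacle is making the subsequence arguments in (b) and (d) genuinely rigorous: since weak convergence of signed measures is not evidently metrizable, one cannot invoke ``every subsequence has a sub-subsequence with the same limit'' in one stroke but must argue test-function by test-function, and the crucial input that forces all sub-subsequential limits to coincide is the uniqueness of the characteristic pair. Bookkeeping of which direction of the auxiliary infinitely divisible fact is used where, and checking that the pointwise quotient $\widehat{\mu_{n_k}^+}/\widehat{\mu_{n_k}^-}$ is legitimate (it is, since $\widehat{\rho^-}$ is continuous and non-vanishing), are the remaining points requiring care.
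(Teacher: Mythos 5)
Your proposal is correct and follows essentially the same route as the paper: part (a) via convergence of $\int g_c(\cdot,z)\,d\zeta_n$ against the bounded continuous function $g_c(\cdot,z)$ plus L\'evy's continuity theorem, and parts (b)--(d) via the factorization $\mu_n^+=\mu_n\ast\mu_n^-$ into infinitely divisible laws with pairs $(\zeta_n^+,\gamma_n)_c$ and $(\zeta_n^-,0)_c$, Gnedenko's theorem, and subsequence/compactness arguments in which uniqueness of the characteristic pair forces all subsequential limits to agree. The only differences are organizational: you isolate the tightness criterion for infinitely divisible sequences as an auxiliary lemma and handle (b) directly through weak limits of $\mu_{n_k}^{\pm}$ and the quotient of characteristic functions, whereas the paper performs the same subsequence extractions inline (citing a compactness theorem for signed measures) and deduces (b) from parts (c) and (a).
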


\begin{proof}
(a) Suppose that $\zeta_n \stackrel{w}{\to} \zeta$ and $\gamma_n \to \gamma$ as $n\to\infty$. Observe that
$$
\widehat{\mu}_n(z)  =  \exp \left( \ri \gamma_{n} z + \int_{\bR} g_c(x,z) \, \zeta_n(\di x) \right) .
$$
Since $g_c(\cdot, z)$ is continuous and bounded, we have
$$\widehat{\mu}_n(z) \to \exp \left( i \gamma z + \int_{\bR} g_c(x,z) \, \zeta(d\di x) \right).$$
The right-hand side of this equation is continuous in $z$ and takes the value 1 at $z=0$. By L\'evy's continuity theorem, it is the characteristic function of some probability distribution $\mu$, and $\mu_n \stackrel{w}{\to} \mu$ as $n\to\infty$.
Then clearly $\mu \sim \qid (\zeta,\gamma_c)_c$.

(c) Let $(n')$ be an arbitrary subsequence of $(n)$. Since $(\mu_{n'})$ is tight and $(\zeta_{n'}^-)$ is tight and uniformly bounded, there exists a further subsequence $(n'')$ of $(n')$ such that $\mu_{n''}$ and $(\zeta_{n''}^-)$ converge weakly, cf. \cite[Thm.~8.6.2]{Bogachev2}.
Denote the limits by $\mu$ and $\xi$, respectively.
Let $\rho_{n''}$ be an infinitely divisible distribution with characteristic pair $(\zeta_{n''}^-, 0)_c$. By Theorem \ref{thm-Gnedenko}, $(\rho_{n''})$ converges weakly to some infinitely divisible distribution $\rho$ with characteristic pair $(\xi,0)_c$. Hence also $\mu_{n''} \ast \rho_{n''}$ converges weakly to $\mu \ast \rho$, and since $\mu_{n''} \ast \rho_{n''}$ is infinitely divisible with characteristic pair $(\zeta_{n''}^+, \gamma_{n''})_c$, it follows from Theorem \ref{thm-Gnedenko} that $\zeta_{n''}^+$ converges weakly and that $\gamma_{n''}$ converges.

We have shown that every subsequence $(n')$ of $(n)$ contains a further subsequence $(n'')$ such that $\zeta_{n''}^+$ converges weakly and such that $\gamma_{n''}$ converges. It follows that $(\gamma_n)_{n\in \bN}$ must be bounded, and that $(\zeta_n^+)_{n\in \bN}$ is tight and uniformly bounded, the latter by \cite[Thm.~8.6.2]{Bogachev2}.
It follows from \eqref{eq-Hahn} that also $(|\zeta_n|)_{n\in \bN}$ is then tight and uniformly bounded.

(b) Suppose that  $(\mu_n)_{n\in \bN}$ converges weakly to $\mu$ and that $(\zeta_n^-)_{n\in \bN}$ is tight and uniformly bounded. Then $(\mu_n)_{n\in \bN}$ is also tight, and it follows from the already proved part (c) that $(\gamma_n)_{n\in \bN}$ is bounded and that $(\zeta_n^+)_{n\in \bN}$ as well as $(|\zeta_n|)_{n\in \bN}$ are tight and uniformly bounded. We claim that $(\gamma_n)_{n\in \bN}$ converges to some constant $\gamma$ and that $(\zeta_n)_{n \in \bN}$ converges weakly to some finite signed measure $\zeta$. For if this was not the case, then by tightness and (uniform) boundedness we could find two subsequences $(\zeta_{n_{k,1}}, \gamma_{n_{k,1}})_{k\in \bN}$ and $(\zeta_{n_{k,2}}, \gamma_{n_{k,2}})_{k\in \bN}$ such that $\zeta_{n_{k,1}} \stackrel{w}{\to} \zeta^1$, $\zeta_{n_{k,2}} \stackrel{w}{\to} \zeta^2$, $\gamma_{n_{k,1}} \to \gamma^1$ and $\gamma_{n_{k,2}} \to \gamma^2$ as $k\to\infty$, but such that $\zeta^1 \neq \zeta^2$ or $\gamma^1 \neq \gamma^2$. It then follows from part (a) that $\mu_{n_{k,1}}$ and $\mu_{n_{k,2}}$ converge to $\qid (\zeta^1,\gamma^1)_c$ and $\qid (\zeta^2,\gamma^2)_c$, respectively, which must be different by the uniqueness of the characteristic pair. This contradicts that $(\mu_n)_{n\in \bN}$ is weakly convergent, and it follows that $\zeta_n \stackrel{w}{\to} \zeta$ and $\gamma_n \to \gamma$ as $n\to\infty$ for some finite signed measure $\zeta$ and some $\gamma \in \bR$. Hence $\mu$ is quasi-infinitely divisible with characteristic pair $(\zeta,\gamma)_c$ by part (a).

(d) Let $(n')$ be a subsequence of $(n)$. By tightness and (uniform) boundedness, there exists a subsequence $(n'')$ such that $\zeta_{n''}$ converges weakly to some finite signed measure $\zeta$ (cf.\ \cite[Thm.~8.6.2]{Bogachev2}) and $\gamma_{n''}$ converges to some $\gamma\in \bR$. By part (a), this shows that $\mu_{n''}$ converges weakly. Hence every subsequence of $(\mu_n)$ has a weakly convergent subsequence, so that $(\mu_n)_{n\in \bN}$ is tight (e.g. \cite[Thm.~8.6.2]{Bogachev2}).
\end{proof}

We have already seen that the sequence of quasi-infinitely divisible Bernoulli distributions $b(1,1/2 + 1/n)$ converges weakly to the non-quasi-infinitely divisible Bernoulli distribution $b(1,1/2)$ as $n\to\infty$; from Theorem \ref{thm-cuppens} we also see that $b(1,1/2+1/n)$ has the quasi-L\'evy measure $\sum_{m=1}^\infty m^{-1} (-1)^{m+1} \left( \frac{n-2}{n+2} \right)^m \delta_{-m}$. The signed measure $\zeta_n$ in the characteristic pair of $b(1,1/2+1/n)$ coincides with the quasi-L\'evy measure, and it is easy to see that $(\zeta_n^-)_{n\in \bN}$ and hence $(|\zeta_n|)_{n\in \bN}$ are neither uniformly bounded nor tight. As the limit is not quasi-infinitely divisible, this is not surprising. It is natural to ask if convergence of $\mu_n$ to a quasi-infinitely divisible distribution implies uniform boundedness or tightness of the signed measures in the characteristic pair. That this is not the case, even if the limit is infinitely divisible, is shown in the next example.

\begin{ex} \label{ex-non-conv}
Let $\sigma(\di x) = (1/2)\, \re^{-|x|}\, \di x$, a symmetric two-sided exponential distribution,
and let $\mu = (1/2) \,\delta_0 + (1/2) \,\sigma$. It is known that $\sigma$ is infinitely divisible with
$$\widehat{\sigma}(z) = \frac{1}{1+z^2} = \exp\left( \int_{-\infty}^\infty \left( \re^{\ri x z} - 1\right) |x|^{-1} \re^{-|x|} \,
\di x\right), \quad z\in \bR,$$
cf. Steutel and van Harn \cite[Ex. IV.29, IV.4.8]{SvH} or \cite[Ex.\ 15.14]{Sa}. Hence
\begin{eqnarray*}
\widehat{\mu}(z) & = & \frac12 \left( 1 + \frac{1}{1+z^2} \right) = \frac{ 1 + \left( {z}/{\sqrt{2}} \right)^2}{1+z^2} = \frac{\widehat{\sigma}(z)}{\widehat{\sigma}(z/\sqrt{2})} \\
& = & \exp \left( \int_{-\infty}^\infty \left( \re^{\ri x z} - 1 \right) \, \frac{\re^{-|x|}}{|x|} \, \di x - \int_{-\infty}^\infty \left( \re^{\ri y z} - 1 \right) \frac{\re^{-\sqrt{2} |y|}}{\sqrt{2} |y|} \, \sqrt{2} \, \di y \right) \\
& = & \exp \left( \int_{-\infty}^\infty \left( \re^{\ri x z} - 1 \right) \frac{\re^{-|x|} - \re^{-\sqrt{2}|x|}}{|x|} \, \di x\right) , \quad z\in \bR,
\end{eqnarray*}
showing that $\mu$ is infinitely divisible with finite L\'evy measure $|x|^{-1} (\re^{-|x|} - \re^{-\sqrt{2}|x|})\, \di x$, drift 0  and Gaussian variance 0. We will now approximate $\mu$ by a sequence of quasi-infinitely divisible distributions whose signed  measures in the characteristic pairs are neither tight nor uniformly bounded. To do so, we choose for each $n\in \bN$ a finite sequence $b_{n,1} < b_{n,2} < \ldots < b_{n,m(n)}$ such that
$$|b_{n,1} - (-n)| < 1/n, \; |b_{n,m(n)} - n| < 1/n, \; | b_{n,j+1} - b_{n,j}| < 1/n , \; \forall j\in \{1,\ldots, m(n)-1\},$$
and such that $\{ b_{n,1}, \ldots, b_{n,m(n)} \}$ is linearly independent over $\bQ$, i.e.\ such that\linebreak
$\sum_{j=1}^{m(n)} l_j b_{n,j} = 0$ with $l_1, \ldots, l_{m(n)} \in \bQ$ implies $l_1 = \ldots = l_{m(n)} = 0$; this is obviously possible, since every nontrivial subinterval of $\bR$ is uncountable. Now define
\begin{eqnarray*}
a_{n,1} & := & \sigma ((-\infty, b_{n,1}]), \quad a_{n,m(n)} := \sigma ( (b_{n,m(n)-1}, \infty)) ,\\
a_{n,j} & := & \sigma ((b_{n,j-1}, b_{n,j}]) \quad \mbox{for} \quad j\in \{2,\ldots, n(m) - 1\}
\end{eqnarray*}
and
$$\sigma_n := \sum_{j=1}^{m(n)} a_{n,j} \delta_{b_{n,j}}, \quad \mu_n := \left( \frac12 + \frac{1}{n} \right) \delta_0 + \left( \frac12 - \frac{1}{n} \right) \sigma_n, \quad n\geq 3.$$
Then $\sigma_n \stackrel{w}{\to} \sigma$ and hence $\mu_n \stackrel{w}{\to} \mu$ as $n\to\infty$. Observe that by Theorem \ref{thm-cuppens}, $\mu_n$ is quasi-infinitely divisible with Gaussian variance 0, drift 0 and finite quasi-L\'evy measure $\nu_n$ given by
$$\nu_n := \sum_{j=1}^\infty j^{-1} (-1)^{j+1} \left( \frac{n-2}{n+2} \right)^j \sigma_n^{\ast j}.$$ Next, observe that $\sigma_n$ is concentrated on $ \Lambda_{n,1} := \{b_{n,1}, \ldots, b_{n,m(n)}\}$, hence $\sigma_n^{\ast j}$ is concentrated on $\Lambda_{n,j} := \{b_{n,r_1} + b_{n,r_2} + \ldots + b_{n,r_j} : r_1,\ldots, r_j \in \{1,\ldots, m(n)\}$. From the linear independence over $\bQ$ of $\Lambda_{n,1}$ it then follows that $\Lambda_{n,j}$ and $\Lambda_{n,j'}$ are disjoint for $j\neq j'$. Hence
$$\nu_n^+ = \sum_{j=1}^\infty \frac{1}{2j-1} \left( \frac{n-2}{n+2} \right)^{2j-1} \sigma_n^{\ast (2j-1)} \quad \mbox{and} \quad \nu_n^- = \sum_{j=1}^\infty \frac{1}{2j} \left( \frac{n-2}{n+2} \right)^{2j} \sigma_n^{\ast (2j)}.$$
Let $K\in \bN$.
To show that $\lim_{n\to\infty} \nu_n^- (\bR \setminus [-K,K]) = +\infty$, let $X_{n,1}, \ldots, X_{n,j}, Y_{n,1}, \ldots, Y_{n,j}$ be independent and identically distributed random variables with distribution $\sigma_n$. Since $P(X_{n,1} \leq 1/2) \geq 1/2$ and $P(X_{n,1} \geq -1/2) \geq 1/2$, it follows from the symmetrization inequalities in \cite[Lemmas V.5.1, V.5.2]{Feller2} that for every $j\in \bN$ and $n\geq 2K+1$ we have
\begin{eqnarray*}
\sigma_n^{\ast j} (\bR \setminus [-K,K]) & = & P( |X_{n,1}+ \ldots + X_{n,j}| > K) \\
& \geq  & \frac12 P( |(X_{n,1} - Y_{n,1}) + \ldots + (X_{n,j} - Y_{n,j})| > 2K) \\
& \geq & \frac14 P( |X_{n,1} - Y_{n,1}| > 2K) \\
& \geq & \frac18 P \left(|X_{n,1}| > 2K+\frac12 \right) \geq \frac18 \int_{2K+1}^\infty \re^{-x}\, \di x.
\end{eqnarray*}
Hence
$$\nu_n^- (\bR \setminus [-K,K]) \geq \frac18 \int_{2K+1}^\infty \re^{-x} \, \di x \sum_{j=1}^\infty \frac{1}{2j} \left( \frac{n-2}{n+2} \right)^{2j} \to +\infty \quad \mbox{as $n\to\infty$.}$$
Defining $\zeta_n := (1\wedge x^2) \, \nu_n(\di x)$, it follows that $\zeta_n^- (\bR \setminus [-K,K]) \to \infty$ as $n\to\infty$. In particular, $(\zeta_n^-)_{n\in \bN}$ is neither uniformly  bounded nor tight, hence also $(\zeta_n)_{n\in \bN}$ is neither uniformly bounded nor tight. This also shows that $\zeta_n$ does not converge weakly, since every weakly convergent sequence of finite signed measures must be uniformly bounded (cf. \cite[Thm.~8.6.2]{Bogachev2}). In particular, $\zeta_n$ does not weakly converge to $(1\wedge x^2) |x|^{-1} (\re^{-|x|} - \re^{-\sqrt{2}|x|}) \di x$, although $\mu_n \stackrel{w}{\to} \mu$ and $\mu$ is infinitely divisible.
\end{ex}

When restricting attention to quasi-infinitely divisible distributions concentrated on the integers $\bZ$, phenomena like in Example \ref{ex-non-conv} do not occur and a complete characterisation of weak convergence in terms of the characteristic pair is possible. This will be treated in Theorem \ref{thm-integers-convergence}.


\section{Support properties of quasi-infinitely divisible distributions} \label{S5}

A striking difference between infinitely divisible distributions and quasi-infinitely divisible distributions is that a non-degenerate infinitely divisible distribution must necessarily have unbounded support (cf. \cite[Cor. 24.4]{Sa}), while there are many non-degenerate quasi-infinitely divisible distributions with bounded support as can be seen from Theorem \ref{thm-finite}.

For infinitely divisible distributions, many properties of the support can be described in terms of the characteristic triplet. For instance, an infinitely divisible distribution $\mu$ with characteristic triplet $(a,\nu,\gamma)_c$ is bounded from below if and only if $a=0$, $\supp \nu \subset [0,\infty)$ and $\int_0^1 x\, \nu(\di x) < \infty$ (cf. \cite[Thm. 24.7]{Sa}). Such a  characterisation cannot hold for quasi-infinitely divisible distributions, as can be seen e.g. by considering the binomial distribution $b(1,p)$ with $p\neq 1/2$, which is quasi-infinitely divisible, concentrated on $\{0,1\}$ and hence has bounded support. On the other hand, when $p\in (0,1/2)$, then the quasi-L\'evy measure $\nu$ is concentrated on $\bN$, and when $p\in (1/2,1)$, then $\nu$ is concentrated on $-\bN$, as follows from Theorem \ref{thm-cuppens}.  However, we can give at least the following result regarding the interplay between the supports of $\mu$, $\nu^-$ and $\nu^+$. Recall the definition of the Laplace transform $\bL_\mu(u) = \int_\bR \re^{-ux} \, \mu(\di x)$ for $u\geq 0$.

\begin{prop} \label{prop-support1}
Let $\mu$ be a quasi-infinitely divisible distribution with characteristic triplet $(a,\nu,\gamma)_c$. Then the following are equivalent:
\begin{itemize}
\item[(i)] $\mu$ is bounded from below, $\supp \, \nu^- \subset [0,\infty)$ and $\int_{(0,1)} x\, \nu^-(\di x) < \infty$.
\item[(ii)] $a=0$, $\supp \, \nu^+ \subset [0,\infty)$ and $\int_{(0,1)} x\, \nu^+(\di x) < \infty$.
\end{itemize}
If one (hence both) of the above conditions are satisfied, denote by $\gamma_0$ the drift of $\mu$. Then the Laplace transform $\bL_\mu$ of $\mu$ is given by
\begin{equation} \label{eq-Laplace1}
\bL_\mu(u) =  \exp \left( - \gamma_0 u - \int_0^\infty (1-\re^{-ux}) \, \nu (\di x) \right), \quad u\geq 0,
\end{equation}
and we have
$$\gamma_0 = \inf (\supp  \mu).$$
\end{prop}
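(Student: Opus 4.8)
The plan is to base everything on the factorization $\mu^+=\mu\ast\mu^-$ recalled in Section~1, in which $\mu^+\sim\qid(a,\nu^+,\gamma)_c$ and $\mu^-\sim\qid(0,\nu^-,0)_c$ are infinitely divisible; here $\mu^+$ has Gaussian variance $a$ and $\mu^-$ has Gaussian variance $0$, since the atom at $0$ of the characteristic pair $\zeta$ of $\mu$ passes entirely to $\zeta^+$ (because $a=\zeta(\{0\})\ge 0$ by Lemma~\ref{lem-a}, so $\zeta^+(\{0\})=\tfrac12(|\zeta|(\{0\})+\zeta(\{0\}))=a$ and $\zeta^-(\{0\})=0$). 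The other ingredient is the criterion of Sato \cite[Thm.~24.7]{Sa}: an infinitely divisible distribution with characteristic triplet $(a',\nu',\gamma')_c$ is bounded from below if and only if $a'=0$, $\supp\nu'\subset[0,\infty)$ and $\int_{(0,1)}x\,\nu'(\di x)<\infty$.

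For (i)$\Rightarrow$(ii): assuming (i), the conditions on $\nu^-$ together with the vanishing Gaussian variance of $\mu^-$ show via \cite[Thm.~24.7]{Sa} that $\mu^-$ is bounded from below; hence the convolution $\mu^+=\mu\ast\mu^-$ is bounded from below, and the ``only if'' part of \cite[Thm.~24.7]{Sa} applied to the infinitely divisible $\mu^+$ with triplet $(a,\nu^+,\gamma)_c$ yields exactly (ii). For (ii)$\Rightarrow$(i): assuming (ii), $\mu^+$ is infinitely divisible with triplet $(0,\nu^+,\gamma)_c$ satisfying the conditions of \cite[Thm.~24.7]{Sa}, so $\mu^+$ is bounded from below. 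The key step is now the identity $\supp(\mu\ast\mu^-)=\overline{\supp\mu+\supp\mu^-}$, which gives $\inf\supp\mu^+=\inf\supp\mu+\inf\supp\mu^-$ (the right-hand side being $-\infty$ as soon as one of the summands is); since the left-hand side is finite, both $\mu$ and $\mu^-$ are bounded from below. Applying the ``only if'' part of \cite[Thm.~24.7]{Sa} to $\mu^-$ (infinitely divisible, Gaussian variance $0$) then gives $\supp\nu^-\subset[0,\infty)$ and $\int_{(0,1)}x\,\nu^-(\di x)<\infty$, and together with boundedness of $\mu$ this is (i). I expect this single observation --- that boundedness below of the \emph{factor} $\mu^+$ forces boundedness below of both $\mu$ and $\mu^-$ --- to be the only non-routine point; everything else is bookkeeping with the factorization.

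Finally, suppose (i) and (ii) hold, so $\mu$, $\mu^+$ and $\mu^-$ are all bounded from below and thus have finite Laplace transforms and, by Remark~\ref{rem-drift}, drifts $\gamma_0$, $\gamma_0^+$, $\gamma_0^-$ with $\gamma_0=\gamma_0^+-\gamma_0^-$ (drifts add under convolution, Remark~\ref{rem-conv}(a)). The L\'evy--Khintchine formula for infinitely divisible laws whose L\'evy measure is concentrated on $[0,\infty)$ and which have a finite drift gives $\bL_{\mu^\pm}(u)=\exp(-\gamma_0^\pm u-\int_0^\infty(1-\re^{-ux})\,\nu^\pm(\di x))$ for $u\ge 0$ (e.g.\ \cite[Thm.~24.11]{Sa}, or by analytically continuing the drift form of $\widehat{\mu^\pm}$ to $z=\ri u$). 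Since the Laplace transform of a convolution is the product of the Laplace transforms (Tonelli) and $\bL_{\mu^-}(u)>0$, dividing $\bL_{\mu^+}=\bL_\mu\,\bL_{\mu^-}$ yields $\bL_\mu(u)=\exp(-\gamma_0 u-\int_0^\infty(1-\re^{-ux})\,\nu(\di x))$, which is \eqref{eq-Laplace1}. To identify $\gamma_0$, note from this formula that $-u^{-1}\log\bL_\mu(u)=\gamma_0+u^{-1}\int_0^\infty(1-\re^{-ux})\,\nu(\di x)$, where the last term tends to $0$ as $u\to\infty$ by dominated convergence (using $u^{-1}(1-\re^{-ux})\le\min\{u^{-1},x\}$ together with $\int_{(0,1)}x\,|\nu|(\di x)<\infty$ and $|\nu|((1,\infty))<\infty$); on the other hand $-u^{-1}\log\bL_\mu(u)\to\inf\supp\mu$ as $u\to\infty$ for any distribution bounded from below, by the elementary bounds $\bL_\mu(u)\le\re^{-u\inf\supp\mu}$ and $\bL_\mu(u)\ge\re^{-u(\inf\supp\mu+\ep)}\mu([\inf\supp\mu,\inf\supp\mu+\ep))$ valid for every $\ep>0$. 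Comparing the two limits gives $\gamma_0=\inf\supp\mu$.
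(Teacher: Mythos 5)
Your proof is correct and follows essentially the same route as the paper: the factorization $\mu \ast \mu^- = \mu^+$ into infinitely divisible factors with triplets $(0,\nu^-,0)$ and $(a,\nu^+,\gamma)$, Sato's Theorem 24.7 for the equivalence of (i) and (ii), and Sato's Theorem 24.11 together with division of the factor Laplace transforms for \eqref{eq-Laplace1}. The only (cosmetic) deviation is at the very end, where you identify $\gamma_0=\inf(\supp \mu)$ directly from the asymptotics of $-u^{-1}\log \bL_\mu(u)$ rather than citing Sato's Corollary 24.8 and Lemma 24.1 as the paper does; both arguments are valid.
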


\begin{proof}
Let $X,Y,Z$ be random variables with $\law(X) = \mu$, $\law(Y) \sim \qid (0,\nu^-, 0)_c$, $\law(Z) \sim \qid (a,\nu^+,\gamma)_c$ and such that $X$ and $Y$ are independent. Then \eqref{eq-factor1} holds. From the above mentioned characterisation of the support of infinitely divisible distributions we then have
\begin{equation*}
\mbox{\rm (i)}  \Longleftrightarrow   \mbox{$X$ and $Y$ bounded from below}
\Longleftrightarrow  \mbox{$Z$ bounded from below}
 \Longleftrightarrow  \mbox{\rm (ii)}.
\end{equation*}
If (i) and (ii) are satisfied, then $\mu$ has drift and $\mu \sim \qid (0,\nu,\gamma_0)_0$. Choosing $Y$ and $Z$ as above with respect to $c(x) =0$, i.e. $\law(Y) \sim \qid (0,\nu^-,0)_0$ and $\law(Z) \sim \qid (0,\nu^+,\gamma_0)_0$, the Laplace transforms of $Y$ and $Z$ are given by
$\bE \re^{-uY} = \exp \left( -\int_0^\infty (1-\re^{-ux} ) \, \nu^-(\di x) \right)$ {and} $\bE \re^{-uZ} = \exp \left( -\gamma_0 u - \int_0^\infty (1-\re^{-ux} ) \, \nu^+(\di x) \right)$,
respectively (e.g.\ \cite[Th.~
24.11]{Sa}). This gives \eqref{eq-Laplace1} since $\bE \re^{-u X} \, \bE \re^{-uY} =  \bE \re^{-uZ}$. Finally, we have\; $\inf \supp \law(Y) = 0$ and\; $\inf \supp \law(X) = \gamma_0$ by \cite[Cor. 24.8]{Sa}, so that\; $\inf (\supp \mu) = \gamma_0$ by \cite[Lem. 24.1]{Sa}.
\end{proof}

Infinite divisibility of a  distribution concentrated on $[0,\infty)$ can be characterized by the form of the Laplace transform (e.g. \cite[before Thm. 51.1]{Sa}). Under extra conditions, a characterisation in this vein can also be obtained for quasi-infinitely divisible distributions:

\begin{prop} \label{prop-support2}
Let $\gamma_0 \in \bR$ and $\nu\cl \mcal B_0 \to \R$ be a quasi-L\'evy type measure
with $\supp \nu \subset [0,\infty)$ and $\int_{(0,1)} x |\nu|(\di x) < \infty$. Let $\mu$ be a distribution on $\bR$. Then the following are equivalent:
\begin{enumerate}
\item[(i)] $\mu$ is bounded from below and quasi-infinitely divisible with characteristic triplet $(0,\nu,\gamma_0)_0$.
\item[(ii)] The Laplace transform of $\mu$ is finite for $u\geq 0$ and has the representation \eqref{eq-Laplace1}.
\end{enumerate}
\end{prop}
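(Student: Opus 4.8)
The plan is to deduce both implications from Proposition~\ref{prop-support1} together with uniqueness of the one-sided Laplace transform of a finite measure that is bounded from below. As a preliminary, write $\nu=\nu^{+}-\nu^{-}$ for the Hahn-Jordan decomposition; since $\supp\nu\subset[0,\infty)$ and $\int_{(0,1)}x\,|\nu|(\di x)<\infty$, both $\nu^{+}$ and $\nu^{-}$ are ordinary L\'evy measures, supported on $[0,\infty)$, with $\int_{(0,1)}x\,\nu^{\pm}(\di x)<\infty$ and $\nu^{\pm}([1,\infty))<\infty$; in particular, for every $u\ge 0$ the function $x\mapsto 1-\re^{-ux}$ is integrable with respect to $|\nu|$, and
$$\int_{0}^{\infty}(1-\re^{-ux})\,\nu^{+}(\di x)=\int_{0}^{\infty}(1-\re^{-ux})\,\nu(\di x)+\int_{0}^{\infty}(1-\re^{-ux})\,\nu^{-}(\di x).$$
For the implication (i)$\Rightarrow$(ii) this is essentially immediate: a distribution bounded from below has $\bL_{\mu}(u)<\infty$ for all $u\ge0$, and since $\supp\nu^{-}\subset[0,\infty)$ and $\int_{(0,1)}x\,\nu^{-}(\di x)<\infty$, condition~(i) of Proposition~\ref{prop-support1} holds, so that proposition supplies the representation~\eqref{eq-Laplace1}, the drift appearing there coinciding with the prescribed $\gamma_{0}$ by uniqueness of the characteristic triplet.

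For the implication (ii)$\Rightarrow$(i) I would argue by factorization, paralleling the proof of Proposition~\ref{prop-support1}. Let $\mu^{-}$ be the infinitely divisible distribution with Gaussian variance $0$, L\'evy measure $\nu^{-}$ and drift $0$, and let $\mu^{+}$ be the infinitely divisible distribution with Gaussian variance $0$, L\'evy measure $\nu^{+}$ and drift $\gamma_{0}$; both exist by the data collected above, $\mu^{-}$ is concentrated on $[0,\infty)$ and $\mu^{+}$ on $[\gamma_{0},\infty)$ with $\inf\supp\mu^{+}=\gamma_{0}$, and by \cite[Thm.~24.11]{Sa} their Laplace transforms are $\bL_{\mu^{-}}(u)=\exp(-\int_{0}^{\infty}(1-\re^{-ux})\,\nu^{-}(\di x))$ and $\bL_{\mu^{+}}(u)=\exp(-\gamma_{0}u-\int_{0}^{\infty}(1-\re^{-ux})\,\nu^{+}(\di x))$ for $u\ge0$. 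Multiplying \eqref{eq-Laplace1} by $\bL_{\mu^{-}}(u)$ and using the displayed additivity yields $\bL_{\mu\ast\mu^{-}}(u)=\bL_{\mu}(u)\,\bL_{\mu^{-}}(u)=\bL_{\mu^{+}}(u)$ for all $u\ge0$, with all quantities finite. Since $\mu^{+}$ is supported on $[\gamma_{0},\infty)$ we have $\bL_{\mu^{+}}(u)\le\re^{-\gamma_{0}u}$, hence the same bound holds for $\bL_{\mu\ast\mu^{-}}$, which forces $\mu\ast\mu^{-}$ to be supported on $[\gamma_{0},\infty)$ as well; after shifting both measures by $-\gamma_{0}$ and invoking uniqueness of the Laplace transform of a finite measure on $[0,\infty)$, we get $\mu\ast\mu^{-}=\mu^{+}$. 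Taking Fourier transforms and dividing by $\widehat{\mu^{-}}(z)$, which never vanishes, then gives
$$\widehat{\mu}(z)=\frac{\widehat{\mu^{+}}(z)}{\widehat{\mu^{-}}(z)}=\exp\Bigl(\ri\gamma_{0}z+\int_{\bR}(\re^{\ri zx}-1)\,\nu(\di x)\Bigr),\qquad z\in\bR,$$
so $\mu$ is quasi-infinitely divisible with characteristic triplet $(0,\nu,\gamma_{0})_{0}$. Finally, condition~(ii) of Proposition~\ref{prop-support1} now applies to $\mu$, whence $\mu$ is bounded from below (indeed $\inf\supp\mu=\gamma_{0}$), completing the proof.

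The only genuinely delicate point is the passage from the identity $\bL_{\mu\ast\mu^{-}}=\bL_{\mu^{+}}$ of one-sided Laplace transforms to the identity $\mu\ast\mu^{-}=\mu^{+}$ of distributions: a priori we do not yet know that $\mu$, hence $\mu\ast\mu^{-}$, is bounded from below, and a Laplace transform restricted to $u\ge0$ need not determine a distribution carrying mass on both half-lines, which is exactly why one must first extract the decay estimate $\bL_{\mu\ast\mu^{-}}(u)\le\re^{-\gamma_{0}u}$ in order to locate the support and reduce to the classical uniqueness statement on $[0,\infty)$. Everything else is routine bookkeeping with the Hahn-Jordan decomposition and the quoted facts about infinitely divisible distributions on a half-line.
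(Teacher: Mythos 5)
Your argument is correct, but the route for (ii)$\Rightarrow$(i) differs from the paper's. The paper argues by analytic continuation: it extends both sides of \eqref{eq-Laplace1} to functions of a complex variable $u+\ri v$, continuous on $\{\Re(w)\ge 0\}$ and holomorphic on $\{\Re(w)>0\}$; since they agree on $[0,\infty)$ they agree on the whole closed half-plane by the identity theorem, and evaluating on the imaginary axis yields directly $\widehat{\mu}(v)=\exp\bigl(\ri\gamma_0 v+\int_0^\infty(\re^{\ri v x}-1)\,\nu(\di x)\bigr)$, i.e.\ the characteristic triplet $(0,\nu,\gamma_0)_0$, after which Proposition~\ref{prop-support1} gives boundedness from below exactly as you conclude. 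You instead stay on the real axis: you build the infinitely divisible factors $\mu^\pm$ from the Hahn--Jordan parts (the same factorization device the paper uses for Proposition~\ref{prop-support1} and Theorem~\ref{thm-moments1}), derive $\bL_{\mu\ast\mu^-}=\bL_{\mu^+}$ on $[0,\infty)$, and then — and this is the step you rightly single out as delicate, and handle correctly — use the bound $\bL_{\mu\ast\mu^-}(u)\le\re^{-\gamma_0 u}$ to force $\supp(\mu\ast\mu^-)\subset[\gamma_0,\infty)$, so that the classical uniqueness theorem for Laplace transforms of measures on a half-line applies and gives $\mu\ast\mu^-=\mu^+$; dividing characteristic functions then identifies the triplet. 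What each approach buys: the paper's complex-analytic argument is shorter and needs no uniqueness theorem for one-sided Laplace transforms nor any support-locating estimate, while yours avoids the half-plane holomorphy/identity-theorem machinery, is more probabilistic in flavour, and makes the factorization $X+Y\stackrel{d}{=}Z$ underlying \eqref{eq-factor1} explicit at the level of Laplace transforms; both rely on the same quoted facts about infinitely divisible distributions on $[0,\infty)$ (Sato, Thm.~24.11 and Cor.~24.8).
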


\begin{proof}
That (i) implies (ii) follows from Proposition \ref{prop-support1}. To prove the converse, suppose that $\bL_\mu(u)= \int_\bR \re^{-u x} \, \mu(\di x) < \infty$ for $u\geq 0$. Then $g$, defined by
$$g(u + \ri v) = \int_\bR \re^{- (u+\ri v) x} \, \mu(\di x)$$
exists in $\bC$ for $u\geq 0$ and $v\in \bR$, we have $g(u) = \bL_\mu(u)$ for $u\geq 0$  and by standard theorems on parameter dependent integrals
(e.g.\ \cite[IV \S 5 Section 4]{Elstrodt}), $g$ is continuous on $\{ w \in \bC\cl \Re (w) \geq 0\}$ and holomorphic on $\{ w  \in \bC \cl \Re (w) > 0\}$. Similarly, since $\int_0^\infty (1\wedge x) \, |\nu|(\di x)< \infty$,
$$f (u + \ri v) = \exp \left( - \gamma_0 (u + \ri v) - \int_0^\infty (1 - \re^{-(u+\ri v)x}) \, \nu(\di x) \right), \quad u, v \in \bR, u \geq 0,$$
defines a continuous function on $\{ w \in \bC\cl \Re (w) \geq 0\}$ that is holomorphic on $\{ w \in \bC\cl \Re(w) > 0\}$.
Since $f$ and $g$ agree on $\{w \in \bC\cl \Re (w) \geq 0, \Im (w) = 0\}$, they agree on $\{ w \in \bC\cl \Re (w) > 0\}$
(e.g.\ \cite[Cor.~IV.3.8]{Conway1}) and by continuity then also on the imaginary axis. Hence  $\widehat{\mu}(v) = g(-\ri v) = f(-\ri v) =
\exp \left(  \ri \gamma_0 v + \int_0^\infty (\re^{\ri v x} - 1) \, \nu(\di x)\right)$ for $v\in \bR$, showing that
$\mu$ is quasi-infinitely divisible with characteristic triplet $(0,\nu,\gamma_0)_0$. By Proposition \ref{prop-support1},
$\mu$ is then also bounded from below.
\end{proof}

Quasi-infinitely divisible distributions supported on $[0,\infty)$ with some additional properties can be characterised in a similar way as infinitely divisible distributions supported on $[0,\infty)$; the following theorem hence is an analogue of Theorem 51.1 in \cite{Sa} for infinitely divisible distributions.

\begin{thm} \label{thm-support3}
Let $\mu$ be a distribution with $\supp \mu \subset [0,\infty)$.
Then the following are equivalent:
\begin{itemize}
\item[(i)] $\mu$ is quasi-infinitely divisible with
$\supp \nu^- \subset [0,\infty)$ and $\int_{(0,1)} x \, \nu^-(\di x) <
\infty$, where $\nu$  denotes the quasi-L\'evy measure of $\mu$.
\item[(ii)] $\mu$ is quasi-infinitely divisible with $a =  0$,
$\supp \nu^+ \subset [0,\infty)$ and $\int_{(0,1)} x \, \nu^+ (\di x) <
\infty$, where $\nu$ denotes the quasi-L\'evy measure of $\mu$ and $a$ its Gaussian variance.
\item[(iii)] There exists a constant $\gamma_0 \geq 0$ and a quasi-L\'evy type measure $\sigma$ with $\supp \sigma \subset [0,\infty)$ and $\int_{(0,1)} x  \, |\sigma|(\di x) <
\infty$ such that
\begin{equation} \label{eq-support2}
\int_{[0,x]} y\, \mu(\di y) = \int_{(0,x]} \mu( [0,x-y]) y \,
\sigma(\di y) + \gamma_0 \, \mu([0,x]), \quad \forall\; x > 0.
\end{equation}
\end{itemize} If some and hence all of the above equivalent conditions are satisfied, then $\sigma=\nu$ and
$\gamma_0$ is the drift of $\mu$.
\end{thm}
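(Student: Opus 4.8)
The plan is to prove the cycle (i)$\,\Leftrightarrow\,$(ii) and (ii)$\,\Rightarrow\,$(iii)$\,\Rightarrow\,$(i), carrying out the passage through (iii) by translating the integral equation \eqref{eq-support2} into an identity between Laplace transforms. The equivalence (i)$\,\Leftrightarrow\,$(ii) is immediate from Proposition \ref{prop-support1}: since $\supp\mu\subset[0,\infty)$ the measure $\mu$ is automatically bounded from below, so conditions (i) and (ii) of the theorem are literally conditions (i) and (ii) of that proposition. Moreover, whenever they hold, Proposition \ref{prop-support1} tells us that $\mu$ has drift $\gamma_0$ with $\mu\sim\qid(0,\nu,\gamma_0)_0$, that $\supp\nu\subset[0,\infty)$ and $\int_{(0,1)}x\,|\nu|(\di x)<\infty$, that $\gamma_0=\inf(\supp\mu)\ge 0$, and that $\bL_\mu(u)=\exp\bigl(-\gamma_0 u-\int_0^\infty(1-\re^{-ux})\,\nu(\di x)\bigr)$ for $u\ge 0$. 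We also record that for any quasi-L\'evy type measure $\sigma$ with $\supp\sigma\subset[0,\infty)$ and $\int_{(0,1)}y\,|\sigma|(\di y)<\infty$ one has $|\sigma|([1,\infty))<\infty$, so the integrals $\int_0^\infty y\re^{-uy}\,\sigma^\pm(\di y)$ and $\int_0^\infty(1-\re^{-uy})\,\sigma^\pm(\di y)$ are finite for $u>0$ (resp.\ $u\ge 0$), and $\int_0^\infty(1-\re^{-uy})\,\sigma(\di y)$ vanishes continuously at $u=0$.

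For (ii)$\,\Rightarrow\,$(iii), take $\sigma=\nu$ and $\gamma_0$ the drift. Writing $L=\bL_\mu$, differentiating the above formula for $L$ under the integral sign (separately on $\nu^+$ and $\nu^-$, legitimate for $u>0$ by the integrability just noted) yields $-L'(u)=L(u)\bigl(\gamma_0+\int_0^\infty y\re^{-uy}\,\nu(\di y)\bigr)$. On the other hand, a termwise Tonelli computation (splitting $\sigma$ into $\sigma^\pm$) shows unconditionally that on $(0,\infty)$ the function $x\mapsto\int_{[0,x]}y\,\mu(\di y)$ has Laplace transform $u\mapsto -L'(u)/u$, while the function $x\mapsto\int_{(0,x]}\mu([0,x-y])\,y\,\sigma(\di y)+\gamma_0\mu([0,x])$ has Laplace transform $u\mapsto \frac{L(u)}{u}\bigl(\gamma_0+\int_0^\infty y\re^{-uy}\,\sigma(\di y)\bigr)$; here one uses $\int_0^\infty\re^{-uv}\mu([0,v])\,\di v=L(u)/u$ and $\int_0^\infty\re^{-uv}\int_{[0,v]}y\,\mu(\di y)\,\di v=-L'(u)/u$. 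With $\sigma=\nu$ these two transforms agree by the displayed identity, and since both sides of \eqref{eq-support2} are right-continuous functions of $x$ of at most linear growth, uniqueness of Laplace transforms gives \eqref{eq-support2} for all $x>0$. The structural hypotheses on $\sigma=\nu$ required in (iii) are exactly those recorded above.

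For (iii)$\,\Rightarrow\,$(i), reverse the computation. Because $\supp\mu\subset[0,\infty)$, $L$ is finite and strictly positive on $[0,\infty)$, and \eqref{eq-support2} forces $-L'(u)=L(u)\bigl(\gamma_0+\int_0^\infty y\re^{-uy}\,\sigma(\di y)\bigr)$, i.e.\ $-(\log L)'(u)=\gamma_0+\frac{\di}{\di u}\int_0^\infty(1-\re^{-uy})\,\sigma(\di y)$ on $(0,\infty)$. Since $\int_0^\infty(1-\re^{-uy})\,\sigma(\di y)$ is finite and vanishes continuously at $u=0$, as does $-\log L$, integrating from $0$ gives $\bL_\mu(u)=\exp\bigl(-\gamma_0 u-\int_0^\infty(1-\re^{-uy})\,\sigma(\di y)\bigr)$ for $u\ge0$, which is precisely representation \eqref{eq-Laplace1} with $\sigma$ in place of $\nu$. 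Proposition \ref{prop-support2} (direction (ii)$\Rightarrow$(i)) then shows $\mu$ is quasi-infinitely divisible with characteristic triplet $(0,\sigma,\gamma_0)_0$, so its Gaussian variance is $0$, its drift is $\gamma_0$, and its quasi-L\'evy measure is $\nu=\sigma$, which by hypothesis satisfies $\supp\nu^-\subset\supp\sigma\subset[0,\infty)$ and $\int_{(0,1)}y\,\nu^-(\di y)\le\int_{(0,1)}y\,|\sigma|(\di y)<\infty$; this is (i). The identifications $\sigma=\nu$ and $\gamma_0=$ drift of $\mu$ also follow, consistently with the choices made in (ii)$\Rightarrow$(iii), by uniqueness of the characteristic triplet.

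The main obstacle I anticipate is purely technical bookkeeping in the Laplace-transform translation: checking that every integral in sight converges for $u>0$, justifying the Tonelli interchanges and the differentiations under the integral sign by splitting each quasi-L\'evy type measure into its positive and negative parts and invoking $\int_{(0,1)}y\,|\cdot|(\di y)<\infty$ together with finiteness of mass on $[1,\infty)$, and verifying enough regularity of both sides of \eqref{eq-support2} (right-continuity in $x$, at most linear growth) that equality of Laplace transforms upgrades to equality at every $x>0$ rather than merely almost everywhere.
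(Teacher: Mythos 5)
Your proposal is correct and follows essentially the same route as the paper: both directions run through the Laplace-transform identity $-\bL_\mu'(u)=\bL_\mu(u)\bigl(\gamma_0+\int_0^\infty y\re^{-uy}\,\sigma(\di y)\bigr)$, with Propositions \ref{prop-support1} and \ref{prop-support2} supplying the equivalence of (i) and (ii), the exponential form \eqref{eq-Laplace1}, and the passage back to quasi-infinite divisibility. The only differences are technical bookkeeping: you extract the ODE from \eqref{eq-support2} by integrating against $\re^{-ux}\,\di x$ and using Fubini plus uniqueness of Laplace transforms of right-continuous functions, whereas the paper approximates $\re^{-ux}$ by step functions and, for the direction toward (iii), invokes the convolution theorem for finite signed measures together with the Fourier-transform switch from Proposition \ref{prop-support2}; both implementations are sound.
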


\begin{proof}
The equivalence of (i) and (ii) is Proposition \ref{prop-support1}, and that (i) and (ii) imply (iii) with $\sigma=\nu$ and $\gamma_0$ the drift follows in complete analogy to the corresponding proof for infinitely divisible distributions as given in \cite[Thm.~51.1]{Sa}, by observing that the convolution theorem also holds for finite signed measures (e.g. Cuppens \cite[Thm. 2.5.4]{Cuppens1975} and a similar reasoning as in the proof of Prop.~\ref{prop-support2} to switch from Laplace transforms of finite signed measures to their Fourier transforms).

To show that (iii) implies (i), denote $\widetilde{\sigma}(\di y) := \gamma_0 \delta_0 (\di y) + y \sigma(\di y)$. Then
\begin{equation} \label{eq-fu}
\int_\bR f (y) y\, \mu (\di y) = \int_\bR \int_\bR f(y+z) \, \widetilde{\sigma}(\di y) \, \mu(\di z)
\end{equation}
for all functions $f$ of the form $f=\alpha_0 \one_{[0,t_1]} + \sum_{i=1}^n \alpha_i \one_{(t_i, t_{i+1}]}$ with $\alpha_i \in \bR$ and $0< t_1 <\ldots < t_{n+1}$; for $n=0$ this follows from \eqref{eq-support2}, and for $n>0$ by linearity. Since for each $u>0$ the function $f_u$  defined by $f_u(x)= \re^{-u x} \one_{[0,\infty)}(x)$ can be represented as an increasing limit of functions of the form $\alpha_0 \one_{[0,t_1]} + \sum_{i=1}^n \alpha_i \one_{(t_i, t_{i+1}]}$, and since both $\int_\bR f_u(y) |y| \, \mu(\di y)$ and $\int_\bR \int_\bR f_u(y+z) |\widehat{\sigma}|(\di y) \, \mu(\di z)$ are finite,
Equation \eqref{eq-fu} also holds for $f_u$ by dominated convergence.
Considering $\mathbb{L}_\mu(u) = \int_{[0,\infty)} \re^{-u x} \, \mu(\di x)$, $u\geq 0$, \eqref{eq-fu} for $u>0$ gives
$$- \frac{\di}{\di u} \mathbb{L}_\mu(u) = \mathbb{L}_\mu(u) \int_{[0,\infty)} \re^{-yu} \, \widetilde{\sigma}(\di y),$$ hence
$$\frac{\di}{\di u} \log \mathbb{L}_\mu(u) = -    \int_{[0,\infty)} \re^{-yu} \, \widetilde{\sigma}(\di y) = - \gamma_0 - \int_{(0,\infty)} y \, \re^{-u y} \,  \sigma(\di y).$$ Since $\log \mathbb{L}_\mu$ is continuous on $[0,\infty)$ with $\log \mathbb{L}_\mu(0) = 0$, we obtain
$$\log \mathbb{L}_\mu(u) = - \gamma_0 u - \int_0^u \int_0^\infty y e^{-ty} \, \sigma(\di y) \, \di t = -\gamma_0 u - \int_0^\infty (1 - \re^{-u y}) \, \sigma(\di y), \quad u \geq 0,$$
showing that $\mu$ is quasi-infinitely divisible with characteristic triplet $(0,\sigma,\gamma_0)_0$ by Proposition \ref{prop-support2}.
\end{proof}

A characterisation in terms of the characteristic triplet for a   quasi-infinitely divisible distribution to be concentrated on the integers will be given in Theorem \ref{thm-integers3} below.


\section{Moments} \label{S6}

Recall that a function $h\cl \bR \to \bR$ is \emph{submultiplicative} if it is nonnegative and there is a constant $B>0$ such that
\begin{equation} \label{eq-subm1}
h(x+y) \leq B h(x) \, h(y), \quad \forall\; x, y \in \bR.
\end{equation}
Examples of submultiplicative functions can be found in \cite[Prop. 25.4]{Sa}, we only note that $x\mapsto (|x|\vee 1)^\alpha$ for $\alpha >0$, $x\mapsto \exp (\alpha |x|^\beta)$ for $\alpha > 0$ and $\beta \in (0,1]$, $x\mapsto \re^{\alpha x}$ for $x\in \bR$ and $x\mapsto \log (|x|\vee \re)$ are submultiplicative functions. We expect the following lemma to be well-known, but we were unable to find a ready reference and hence give a proof:

\begin{lem} \label{lem-subm1}
Let $h\cl \bR \to [0,\infty)$ be submultiplicative and $X$ and $Y$ be two real valued independent random variables. Then $\bE h(X+Y)$ is finite if and only if both $\bE h(X)$ and $\bE h(Y)$ are finite.
\end{lem}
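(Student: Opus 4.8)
The plan is to prove both implications using the submultiplicativity in a symmetric way, exploiting the elementary fact that a submultiplicative function is bounded below by a positive constant on compact sets and that Fubini applies to the nonnegative integrand $h(x+y)$.

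First I would record the following standard facts about a submultiplicative $h\cl\bR\to[0,\infty)$: since $h(0)\le B h(0)^2$, either $h(0)=0$ (which forces $h\equiv 0$ by \eqref{eq-subm1}, a trivial case) or $h(0)\ge B^{-1}>0$; and more importantly, $h$ is bounded away from $0$ on every bounded interval. Indeed, for any $x$, $h(0)\le B h(x)h(-x)$, so $h(x)\ge (Bh(-x))^{-1}$; and by iterating \eqref{eq-subm1} one gets a bound $h(-x)\le C(|x|\vee 1)^{k}$ on bounded sets, whence $h(x)\ge \delta>0$ for $|x|\le M$ with $\delta=\delta(M)>0$. (Alternatively one can cite \cite[Prop.~25.4]{Sa} and its proof for the growth bound, but the lower bound on compacts is what matters here and is elementary.) I would also note $h$ is measurable — this is implicit in the statement since $\bE h(X)$ is being discussed, so I take $h$ Borel measurable.

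The main step is then: assume $\bE h(X+Y)<\infty$; I claim $\bE h(X)<\infty$. Let $\mu_Y=\law(Y)$ and pick $M>0$ with $\mu_Y([-M,M])=:p>0$, which is possible since $\mu_Y$ is a probability measure. Let $\delta=\delta(M)>0$ be the lower bound above, so $h(y)\ge\delta$ for $|y|\le M$. By independence and Tonelli's theorem applied to the nonnegative function $(x,y)\mapsto h(x+y)$,
\begin{align*}
\bE h(X+Y) &= \int_\bR\int_\bR h(x+y)\,\mu_Y(\di y)\,\mu_X(\di x)
\;\ge\; \int_\bR\int_{[-M,M]} h(x+y)\,\mu_Y(\di y)\,\mu_X(\di x).
\end{align*}
Now for $|y|\le M$ we have, writing $x=(x+y)+(-y)$, the bound $h(x)\le B h(x+y)h(-y)$, hence $h(x+y)\ge (B h(-y))^{-1}h(x)\ge (B C(M\vee 1)^k)^{-1}h(x)=:c\,h(x)$ with $c>0$ depending only on $h$ and $M$. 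Therefore the inner integral is $\ge c\,h(x)\,\mu_Y([-M,M])=cp\,h(x)$, and we get $\bE h(X+Y)\ge cp\,\bE h(X)$, so $\bE h(X)<\infty$. By symmetry (interchanging the roles of $X$ and $Y$) also $\bE h(Y)<\infty$.

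For the converse, assume $\bE h(X)<\infty$ and $\bE h(Y)<\infty$. Then directly from \eqref{eq-subm1}, independence, and Tonelli,
$$
\bE h(X+Y) \le B\,\bE\big(h(X)h(Y)\big) = B\,\bE h(X)\,\bE h(Y) < \infty.
$$
This completes the proof. I do not anticipate a genuine obstacle here; the only point requiring a little care is the lower bound $h(x+y)\ge c\,h(x)$ for $y$ in a compact set, which rests on the elementary growth estimate for submultiplicative functions — I would either prove it in a line (iterate \eqref{eq-subm1} to get $h(nx_0)\le B^{n-1}h(x_0)^n$ and interpolate, or simply note $\sup_{|y|\le M}h(-y)<\infty$ follows from measurability plus the fact that $h$ restricted to $[-M,M]$ is dominated by $B^{\lceil 2M\rceil}$ times a product of values of $h$ on $[-1,1]$, which is bounded) or cite \cite[Prop.~25.4]{Sa}. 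If one wants to avoid even that, one can replace "bounded on $[-M,M]$" by choosing $M$ so that additionally $\esssup_{|y|\le M, \mu_Y} h(-y)<\infty$, but the clean statement is the one above.
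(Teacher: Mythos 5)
Your forward direction and the key inequality $h(x) \le B\,h(x+y)\,h(-y)$ in the converse are exactly the paper's; the difference is what you do with that inequality, and there your argument has a gap. You localize $y$ to a compact set $[-M,M]$ of positive $\mu_Y$-measure and then need $\sup_{|y|\le M}h(-y)<\infty$, i.e.\ local boundedness of $h$. But local boundedness does not follow from \eqref{eq-subm1} in the way you indicate: iterating \eqref{eq-subm1} only controls $h$ at sums of points where you already control it (e.g.\ $h(nx_0)\le B^{n-1}h(x_0)^n$), and your alternative remark that $h$ on $[-M,M]$ is dominated by products of values of $h$ on $[-1,1]$ ``which is bounded'' presupposes exactly the boundedness on $[-1,1]$ that is in question. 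For a merely submultiplicative function local boundedness can fail outright (take $h=\re^{f}$ with $f$ a nonlinear additive function), and even for measurable $h$ it is true only via a Steinhaus-type argument (a set of positive Lebesgue measure on which $h$ is bounded, then $E+E$ contains an interval, etc.), or by assuming local boundedness as Sato does; neither is in your write-up, and the statement here assumes only \eqref{eq-subm1}.

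The paper's proof sidesteps this entirely: having shown (as you do) that either $h\equiv 0$ or $h>0$ everywhere, it integrates the same inequality in the form $h(x)/h(-y)\le B\,h(x+y)$ over the full product measure $\mu_X\otimes\mu_Y$, obtaining $\bE h(X)\,\bE\bigl(1/h(-Y)\bigr)\le B\,\bE h(X+Y)<\infty$; since $1/h(-Y)>0$, the factor $\bE\bigl(1/h(-Y)\bigr)$ is strictly positive, so $\bE h(X)<\infty$, and symmetrically for $Y$. No boundedness of $h$ on compacts is needed. Your own fallback at the end is almost this repair, but as phrased (``choosing $M$ so that the essential supremum over $[-M,M]$ is finite'') it can fail, since that essential supremum with respect to $\mu_Y$ may be infinite for every $M$; the correct fix is to integrate over a sublevel set $A=\{y\in[-M,M]:\,h(-y)\le n\}$ of positive $\mu_Y$-measure, which exists because $h$ is finite-valued and measurable. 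With that modification (or with the paper's global integration) your argument is complete; as written, the compact-boundedness step is the genuine gap.
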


\begin{proof}
If $\bE h(X) < \infty$ and $\bE h(Y) < \infty$, then
$\bE h(X+Y) \leq B \bE h(X) \bE h(Y)<\infty$ by \eqref{eq-subm1} and independence. Conversely, suppose that $\bE h(X+Y) < \infty$. If $h$ is equal to the zero-function, we have nothing to prove, so suppose that there is $x_0 \in \bR$ with $h(x_0) > 0$. From \eqref{eq-subm1} we then conclude $B h(x) h(x_0-x) \geq h(x_0) > 0$ so that $h(x) > 0$ for all $x\in \bR$. Further, for $x,y\in \bR$ we have
$h(x) = h(x+y - y) \leq B h(x+y) h(-y)$ so that $h(x) / h(-y) \leq B h(x+y)$. Hence $\bE h(X) \bE (1/h(-Y)) \leq B \bE h(X+Y) < \infty$ so that $\bE h(X) < \infty$ and similarly $\bE h(Y) < \infty$.
\end{proof}

For infinitely divisible distributions and submultiplicative functions, finiteness of $h$-moments can be characterised by the corresponding property of the L\'evy measure restricted to $\{x\in \bR\cl |x|> 1\}$ (cf.\ \cite[Thm.\ 25.3]{Sa}).  This is not true in complete generality for quasi-infinitely divisible distributions and arbitrary submultiplicative functions, as will be shown for exponential moments in Example \ref{ex-non-moments}, but at least one direction holds and we have the following result:

\begin{thm} \label{thm-moments1}
Let $\mu$ be a quasi-infinitely divisible distribution on $\bR$ with characteristic triplet $(a,\nu,\gamma)_c$ with respect to the
representation function $c(x)=x \bf{1}_{\{|x|\leq 1\}}$.\\
(a) Let  $h\cl \bR \to [0,\infty)$ be a submultiplicative function. Then the following are equivalent:
\begin{enumerate}
\item[(i)] $\mu$ and $(\nu^-)_{|\{x\in \bR\cl |x|> 1\}}$ have finite $h$-moments, i.e.\ $\int_\bR h(x) \, \mu(\di x) < \infty$ and $\int_{|x| >1} h(x) \, \nu^-(\di x) < \infty$.
\item[(ii)] $(\nu^+)_{|\{ x\in \bR\cl |x|> 1\}}$ has finite $h$-moment, i.e.\ $\int_{|x|>1} h(x) \, \nu^+(\di x) < \infty$.
\end{enumerate}
In particular, finiteness of the $h$-moment of $(\nu^+)_{|\{ x\in \bR\cl |x|> 1\}}$ implies finiteness of the $h$-moment of
$(\nu^-)_{|\{ x\in \bR\cl |x|> 1\}}$.\\
(b) Let $X$ be a random variable with distribution $\mu$ and let $\alpha \in \bR$. We then have
\begin{eqnarray*}
\bE (X) & = & \gamma + \int_{|x|>1} x \, \nu(\di x) = \gamma_m \quad \mbox{provided} \quad \int_{|x|>1} |x| \, \nu^+(\di x) < \infty, \\
\mbox{\rm Var} (X) & = & a + \int_\bR x^2 \, \nu(\di x) \quad \mbox{provided} \quad \int_{|x|>1} x^2 \, \nu^+(\di x) < \infty, \quad \mbox{and}\\
\bE (\re^{\alpha X}) & = & \exp \left( \alpha^2 a/2 + \int_\bR (\re^{\alpha x}  - 1 - \alpha x \mathbf{1}_{\{|x|\leq 1\}}) \, \nu (\di x) + \alpha \gamma\right)\\
& & \quad \quad \quad \quad \quad \quad \quad \mbox{provided} \quad \int_{|x|>1} \re^{\alpha x} \, \nu^+(\di x)   < \infty.
\end{eqnarray*}
Observe that $\gamma_m$ is the center of $\mu$ as defined in Remark \ref{rem-drift}.
\end{thm}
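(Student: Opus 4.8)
The plan is to reduce everything to the infinitely divisible case via the factorization recorded in \eqref{eq-factor1}. Choose random variables $X,Y,Z$ with $\law(X)=\mu$, $\law(Y)\sim\qid(0,\nu^-,0)_c$, $\law(Z)\sim\qid(a,\nu^+,\gm)_c$, with $X$ and $Y$ independent and $X+Y\eqd Z$. Since $\nu^-$ and $\nu^+$ are genuine (positive) L\'evy measures, $\law(Y)$ and $\law(Z)$ are infinitely divisible with L\'evy measures $\nu^-$ and $\nu^+$, Gaussian variances $0$ and $a$, and the classical L\'evy--Khintchine moment theory of \cite[Ch.~V]{Sa} applies to them.

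For part (a), I would combine Lemma \ref{lem-subm1} with \cite[Thm.~25.3]{Sa}. The latter gives, for submultiplicative $h$, the equivalences $\bE h(Y)<\infty\Leftrightarrow\int_{|x|>1}h(x)\,\nu^-(\di x)<\infty$ and $\bE h(Z)<\infty\Leftrightarrow\int_{|x|>1}h(x)\,\nu^+(\di x)<\infty$, while Lemma \ref{lem-subm1} together with $X+Y\eqd Z$ gives $\bE h(Z)<\infty\Leftrightarrow\bigl(\bE h(X)<\infty\text{ and }\bE h(Y)<\infty\bigr)$. Chaining these three equivalences yields (i)$\Leftrightarrow$(ii) at once, and the final ``in particular'' statement is the implication (ii)$\Rightarrow$(i).

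For part (b), I would apply part (a) with $h(x)=|x|\vee1$, with $h(x)=(|x|\vee1)^2$, and with $h(x)=\re^{\al x}$, respectively; in each case the stated hypothesis on $(\nu^+)_{|\{|x|>1\}}$ forces both the corresponding moment of $\mu$ and the corresponding moment of $(\nu^-)_{|\{|x|>1\}}$ to be finite. The mean formula then follows from $\bE X=\bE Z-\bE Y$ and the classical expressions $\bE Z=\gm+\int_{|x|>1}x\,\nu^+(\di x)$, $\bE Y=\int_{|x|>1}x\,\nu^-(\di x)$ (e.g.\ \cite[Ex.~25.12]{Sa}), giving $\bE X=\gm+\int_{|x|>1}x\,\nu(\di x)$; this equals the center $\gm_m$ because $x-c(x)=x\one_{\{|x|>1\}}$ for $c(x)=x\one_{\{|x|\le1\}}$, so the displayed value coincides with $\gm+\int_\bR(x-c(x))\,\nu(\di x)$ from Remark \ref{rem-drift}. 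The variance formula follows from additivity of variance under independence, $\mathrm{Var}(X)=\mathrm{Var}(Z)-\mathrm{Var}(Y)$, together with $\mathrm{Var}(Z)=a+\int_\bR x^2\,\nu^+(\di x)$ and $\mathrm{Var}(Y)=\int_\bR x^2\,\nu^-(\di x)$ (the $\nu^\pm$-integrals over $\{|x|\le1\}$ being finite by the definition of a quasi-L\'evy type measure, those over $\{|x|>1\}$ by hypothesis and part (a)), whence $\mathrm{Var}(X)=a+\int_\bR x^2\,\nu(\di x)$. Finally, since $\bE\re^{\al(X+Y)}=\bE\re^{\al X}\,\bE\re^{\al Y}$ by independence, $X+Y\eqd Z$, and $\bE\re^{\al Y}\in(0,\infty)$, we get $\bE\re^{\al X}=\bE\re^{\al Z}/\bE\re^{\al Y}$; inserting the classical formula (e.g.\ \cite[Thm.~25.17]{Sa}) which expresses $\bE\re^{\al Z}$ and $\bE\re^{\al Y}$ as the exponentials of $\al^2a/2+\al\gm+\int_\bR(\re^{\al x}-1-\al x\one_{\{|x|\le1\}})\,\nu^+(\di x)$ and of $\int_\bR(\re^{\al x}-1-\al x\one_{\{|x|\le1\}})\,\nu^-(\di x)$, and taking the quotient, yields the asserted identity.

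The verifications are essentially routine. The only step needing a little care is the bookkeeping that each of the $\nu^+$- and $\nu^-$-integrals arising when one separates the two infinitely divisible factors is individually (absolutely) convergent, so that their difference may legitimately be rewritten as a single integral against the quasi-L\'evy measure $\nu=\nu^+-\nu^-$; this is where finiteness of $\int_{|x|\le1}(1\wedge x^2)|\nu|(\di x)$ and the finiteness of the $\nu^-$-moments obtained from part (a) are used. Beyond this I anticipate no genuine obstacle.
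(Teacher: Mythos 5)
Your proposal is correct and follows essentially the same route as the paper: the same factorization $X+Y\stackrel{d}{=}Z$ with $\law(Y)\sim\qid(0,\nu^-,0)_c$ and $\law(Z)\sim\qid(a,\nu^+,\gamma)_c$, Lemma \ref{lem-subm1} combined with \cite[Thm.~25.3]{Sa} for part (a), and additivity of means/variances and multiplicativity of exponential moments together with \cite[Ex.~25.12, Thm.~25.17]{Sa} for part (b). Your extra bookkeeping about absolute convergence of the separate $\nu^\pm$-integrals is exactly the (implicit) content of the paper's appeal to part (a), so nothing is missing.
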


\begin{proof}
As before, let $X,Y,Z$ be random variables with $\law(X) = \mu$, $\law(Y) \sim \qid (0,\nu^-, 0)_c$, $\law(Z)\sim \qid (a,\nu^+,\gamma)_c$   and such that $X$ and $Y$ are independent. Then \eqref{eq-factor1} holds, i.e. $X+Y \stackrel{d}{=} Z$.

To prove (a), recall that an infinitely divisible distribution has finite $h$-moment if and only if the L\'evy measure restricted to $\{ x\in \bR\cl |x|> 1\}$ has finite $h$-moment (e.g. \cite[Thm. 25.3]{Sa}). Hence
$$\mbox{(i)} \Longleftrightarrow \bE h(X) < \infty \; \mbox{and} \; \bE h(Y) < \infty \Longleftrightarrow \bE h(X+Y) < \infty \Longleftrightarrow \mbox{(ii)},$$
where the equivalence in the middle follows from Lemma \ref{lem-subm1}.

The proof of (b) follows from (a), the fact that $\bE X + \bE Y  =  \bE Z$, $\mbox{\rm Var} (X) + \mbox{\rm Var} (Y)  =  \mbox{\rm Var} (Z)$,
$\bE \re^{\alpha X} \, \bE \re^{\alpha Y} =  \bE \re^{\alpha Z}$,
and the corresponding formulas for expectation, variance and exponential moments of the infinitely divisible distributions $Y$ and $Z$ given in \cite[Ex. 25.12 and Thm. 25.17]{Sa}.
\end{proof}

Finiteness of an exponential moment of a quasi-infinitely divisible distribution does not imply finiteness of the corresponding exponential moment of the
total variation of the restricted quasi-L\'evy measure. This is shown in the following example.

\begin{ex} \label{ex-non-moments}
Let $(b_n)_{n\in \bN}$ be a sequence of real numbers that is linearly independent over $\bQ$ and satisfies $b_n \in (2+n-1/4,2+n)$ for each $n\in \bN$; such a sequence obviously exists, since every non-degenerate interval is uncountable. Define the probability distribution
$$\sigma = \frac{11}{12} \delta_{b_1} + \sum_{n=2}^\infty 4^{-n} \delta_{b_n}.$$
Let $\lambda = \int_\bR \re^{x} \, \sigma (\di x)$.
Then (since $\re^{b_1} \geq \re^{2.75} >12$),
$$1 < \lambda = \frac{11}{12} \re^{b_1} + \sum_{n=2}^\infty \re^{b_n} 4^{-n} < \infty.$$
Let $p\in (1/2,1)$ such that $(1-p)/p \geq 1/\lambda$, which is possible since $\lambda > 1$. Define the probability distribution $\mu$ by
$$\mu = p \delta_0 + (1-p) \sigma.$$
By Theorem \ref{thm-cuppens}, $\mu$ is quasi-infinitely divisible with finite quasi-L\'evy measure $\nu =
\sum_{m=1}^\infty m^{-1} (-1)^{m+1} \left( (1-p)/p \right)^m \sigma^{\ast m}$. Since $\sigma$ has finite exponential moment $\int_\bR \re^{x} \, \sigma(\di x)$, so has $\mu$. However, $\int_{x>1} \re^{x} \nu^+(\di x) = \infty$ as we will now show: as in the proof of Example \ref{ex-non-conv}, by the linear independence over $\bQ$ of $(b_n)_{n\in \bN}$, the supports of $\sigma^{\ast m}$ are disjoint for different $m \in \bN$, hence
$$\nu^+ = \sum_{m\in \bN, m\, \rm{odd}} m^{-1} \left(\frac{1-p}{p}\right)^m \sigma^{\ast m}.$$
Since $\int_\bR \re^{x} \sigma^{\ast m} (\di x) = \left( \int_\bR \re^{x} \sigma(\di x) \right)^m = \lambda^m$, and since $\supp \sigma^{\ast m} \subset (1,\infty)$, this gives
$$\int_{\{x>1\}} \re^{x} \, \nu^+(\di x) =  \sum_{m\in \bN, m \,\rm{odd}} m^{-1} \left(\frac{1-p}{p}\right)^m \lambda^m = \infty$$
since $\lambda (1-p)/p \geq 1$. Hence $\int_{\{x>1\}} \re^{x} \nu^+ (\di x) = \infty$ (and similarly $\int_{\{x>1\}} \re^{x} \, \nu^-(\di x) = \infty$) although $\int_\bR \re^{x} \, \mu(\di x) < \infty$ and the function $x\mapsto \re^{x}$ is submultiplicative.
\end{ex}

For a quasi-infinitely divisible distribution concentrated on the integers it will be shown in Theorem \ref{thm-moments-integers} that finiteness of its $h$-moment can be characterised by finiteness of the $h$-moment of the total variation of its quasi-L\'evy measure, provided the function $h$ satisfies an additional condition, the GRS-condition defined in \eqref{eq-GRS} below. Observe that exponential functions do not satisfy the GRS-condition. If a characterisation as in Theorem \ref{thm-moments-integers} below holds for general quasi-infinitely divisible distributions when $h$ satisfies the GRS-condition, we do not know.


\section{Continuity properties} \label{S7}

In this section we shall give some sufficient conditions in terms of the characteristic triplet for a quasi-infinitely divisible distribution to have a Lebesgue density or to be continuous. The following result ensures densities and is in line with the corresponding results for infinitely divisible distributions by Orey, cf. \cite[Prop. 28.3]{Sa}.

\begin{thm} \label{thm-density}
Let $\mu$ be a quasi-infinitely divisible distribution with characteristic triplet $(a,\nu,\gamma)_c$ with respect to some $c$.
Suppose further that $a>0$ or
\begin{align} \label{eq-density1}
&\liminf_{r\downarrow 0} r^{-\beta} \int_{[-r,r]} x^2 \, \nu^+ (\di x)\\
\nonumber &\qq\qq > \limsup_{r\downarrow 0} r^{-\beta} \int_{[-r,r]}  x^2 \,
\nu^-(\di x) = 0 \quad \mbox{for some $\beta \in (0,2)$}.
\end{align}
Then $\mu$ has an infinitely often differentiable density whose derivatives tend to zero as $|x|\to \infty$.
\end{thm}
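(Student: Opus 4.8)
The plan is to prove that $\widehat\mu$ decays so fast that $z\mapsto z^k\widehat\mu(z)$ lies in $L^1(\bR)$ for every $k\in\bN_0$; once that is in hand, Fourier inversion gives the density $p(x)=\tfrac1{2\pi}\int_\bR\re^{-\ri zx}\widehat\mu(z)\,\di z$, repeated differentiation under the integral sign (justified by $z^k\widehat\mu(z)\in L^1$) shows $p\in C^\infty$ with $p^{(k)}(x)=\tfrac1{2\pi}\int_\bR(-\ri z)^k\re^{-\ri zx}\widehat\mu(z)\,\di z$, and the Riemann--Lebesgue lemma applied to $(-\ri z)^k\widehat\mu(z)$ gives $p^{(k)}(x)\to0$ as $|x|\to\infty$. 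So everything reduces to an upper bound for $|\widehat\mu(z)|=\exp(\Re\Psi_\mu(z))$. Taking real parts in $\Psi_\mu(z)=\ri\gamma z-\tfrac a2z^2+\int_\bR(\re^{\ri zx}-1-\ri zc(x))\,\nu(\di x)$ and using that $c$ is real-valued yields
\begin{equation*}
\Re\Psi_\mu(z)=-\tfrac a2z^2-\int_\bR(1-\cos zx)\,\nu^+(\di x)+\int_\bR(1-\cos zx)\,\nu^-(\di x),
\end{equation*}
which is just the identity $\widehat\mu=\widehat{\mu^+}/\widehat{\mu^-}$ for the infinitely divisible $\mu^+\sim\qid(a,\nu^+,\gamma)_c$ and $\mu^-\sim\qid(0,\nu^-,0)_c$. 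It suffices to produce $\delta>0$, $\beta_0\in(0,2]$ and $R>0$ with $\Re\Psi_\mu(z)\le-\delta|z|^{\beta_0}$ for $|z|\ge R$.

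If $a>0$, I would control only the $\nu^-$-term. Splitting at a level $\eta>0$ and using $1-\cos t\le t^2/2$ for all $t$ together with $1-\cos t\le2$ gives
\begin{equation*}
\int_\bR(1-\cos zx)\,\nu^-(\di x)\le\tfrac{z^2}2\int_{[-\eta,\eta]}x^2\,\nu^-(\di x)+2\,\nu^-(\{|x|>\eta\}).
\end{equation*}
Since $\int(1\wedge x^2)\,|\nu|(\di x)<\infty$, we have $\int_{[-\eta,\eta]}x^2\,\nu^-(\di x)\to0$ as $\eta\downarrow0$, so we may fix $\eta$ with $\int_{[-\eta,\eta]}x^2\,\nu^-(\di x)<a/2$; then $\Re\Psi_\mu(z)\le-\tfrac a2z^2+\tfrac a4z^2+2\nu^-(\{|x|>\eta\})$, so $\beta_0=2$ works with $\delta=a/4$ and some finite additive constant absorbed into $R$.

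If instead \eqref{eq-density1} holds, put $r=1/|z|$ for $|z|\ge1$ and set $\ell:=\liminf_{r\downarrow0}r^{-\beta}\int_{[-r,r]}x^2\,\nu^+(\di x)>0$. Using $1-\cos t\ge c_0t^2$ on $[-1,1]$ for some $c_0>0$ and discarding the mass of $\nu^+$ outside $[-r,r]$,
\begin{equation*}
\int_\bR(1-\cos zx)\,\nu^+(\di x)\ge c_0z^2\int_{[-r,r]}x^2\,\nu^+(\di x)\ge\tfrac{c_0\ell}{2}\,|z|^{2-\beta}\quad\text{for }|z|\text{ large.}
\end{equation*}
For the $\nu^-$-term I would again bound it by $\tfrac{z^2}2\int_{[-r,r]}x^2\,\nu^-(\di x)+2\nu^-(\{|x|>r\})$; the first summand is $o(|z|^{2-\beta})$ directly from $\limsup_{r\downarrow0}r^{-\beta}\int_{[-r,r]}x^2\,\nu^-(\di x)=0$. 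The subtle point is that the hypothesis controls only the truncated second moment of $\nu^-$ near $0$, whereas I need the \emph{tail mass} $\nu^-(\{|x|>r\})=o(r^{\beta-2})$. To get this I would decompose $\{r<|x|\le1\}$ into dyadic shells $\{2^jr<|x|\le2^{j+1}r\}$, bound the $\nu^-$-mass of each by $(2^jr)^{-2}\int_{[-2^{j+1}r,2^{j+1}r]}x^2\,\nu^-(\di x)$, apply the $\limsup$ hypothesis shell by shell, and sum the resulting geometric series $\sum_j2^{j(\beta-2)}$, which converges precisely because $\beta<2$; together with the trivial bound $\nu^-(\{|x|>1\})<\infty$ this gives $r^{2-\beta}\nu^-(\{|x|>r\})\to0$. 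Combining, $\Re\Psi_\mu(z)\le-\tfrac{c_0\ell}{2}|z|^{2-\beta}+o(|z|^{2-\beta})\le-\tfrac{c_0\ell}{4}|z|^{2-\beta}$ for $|z|\ge R$, so $\beta_0=2-\beta\in(0,2)$ works.

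The main obstacle is exactly that last step: transferring the near-zero truncated-second-moment control of $\nu^-$ furnished by \eqref{eq-density1} into a genuine decay rate for the tail mass $\nu^-(\{|x|>r\})$ as $r\downarrow0$, where the dyadic summation and the strict inequality $\beta<2$ are what make it go through. The remaining ingredients — the two elementary trigonometric inequalities, the dominated-convergence choice of $\eta$ in the Gaussian case, and the Fourier-inversion / differentiation-under-the-integral / Riemann--Lebesgue endgame — are all routine.
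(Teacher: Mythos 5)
Your proposal is correct, and its overall strategy coincides with the paper's: establish an upper bound $\Re\Psi_\mu(z)\le -\delta|z|^{2-\beta}$ (resp.\ $-\delta z^2$ when $a>0$) for large $|z|$, conclude that $z\mapsto |z|^n|\widehat{\mu}(z)|$ is integrable for every $n$, and then invoke Fourier inversion, differentiation under the integral sign and Riemann--Lebesgue (the paper compresses this last step into a citation of \cite[Prop.~28.1]{Sa}). The implementation differs in two technical points. For $a>0$ the paper simply quotes Lemma \ref{lem-a} (i.e.\ Sato's Lemma 43.11) to get $\lim_{|z|\to\infty}z^{-2}\Psi_\mu(z)=-a/2$, whereas you re-derive the needed bound by splitting the $\nu^-$-integral at a level $\eta$ with $\int_{[-\eta,\eta]}x^2\,\nu^-(\di x)<a/2$; this is more elementary and self-contained, at the cost of a few extra lines. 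The genuinely delicate step, converting the hypothesis $\limsup_{r\downarrow 0}r^{-\beta}\int_{[-r,r]}x^2\,\nu^-(\di x)=0$ into the tail estimate $\nu^-(\{|x|>r\})=o(r^{\beta-2})$, is handled by the paper via partial integration against $G(r)=\int_{|x|\le r}x^2\,\nu^-(\di x)$, while you use a dyadic-shell decomposition of $\{r<|x|\le 1\}$ and sum the geometric series $\sum_j 2^{j(\beta-2)}$. Your dyadic argument is correct (the shell bound $\nu^-(\{2^jr<|x|\le 2^{j+1}r\})\le (2^jr)^{-2}\int_{[-2^{j+1}r,2^{j+1}r]}x^2\,\nu^-(\di x)$, the uniform smallness of the truncated second moment below a threshold $r_0$, and the finiteness of $\nu^-(\{|x|>r_0/2\})$ together give exactly the claimed $o(r^{\beta-2})$), and it makes the role of the strict inequality $\beta<2$ particularly transparent; the paper's integration-by-parts route is more compact but reaches the same estimate.
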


Observe that the condition \lq\lq $a>0$ or \eqref{eq-density1}\rq\rq~ can be summarized as
\begin{equation} \label{eq-density2}
\liminf_{r\downarrow 0} r^{-\beta} \zeta^+ ([-r,r]) > \limsup_{r\downarrow 0} r^{-\beta} \zeta^- ([-r,r]) = 0 \quad \mbox{for some $\beta \in [0,2)$},
\end{equation}
where $\zeta$ denotes the signed measure in the characteristic pair.
Also observe that property \eqref{eq-density1} roughly states that, appropriately scaled,  $\int_{ [-r,r]} x^2 \, \nu^+(\di x)$ dominates $\int_{[-r,r]} x^2 \,\nu^- (\di x)$, which  is in the spirit of the results of  Lemma \ref{lem-de2}.

\begin{proof}
If $a>0$, then the characteristic exponent $\Psi_\mu$ of $\mu$ satisfies $\lim_{|z|\to \infty} z^{-2} \Psi_\mu(z) = -a/2 < 0$ by Lemma \ref{lem-a}. Hence there is $K>0$ such that
$$|\widehat{\mu}(z)| =  \re^{\Re ( \Psi_\mu(z))} \leq \re^{- z^2 a/4} \quad \text{for $\forall \; z \in \bR$ with $|z| \geq K$}.$$
It follows that $\int_\bR |\widehat{\mu}(z)| \, |z|^n \, \di z< \infty$ for all $n\in \bN$, so that $\mu$ has an infinitely often differentiable density on $\bR$ with derivatives tending to $0$ (e.g. \cite[Prop. 28.1]{Sa}).

Now suppose that $a=0$ and that \eqref{eq-density1} holds.
Since $\lim_{r\to 0} r^{-2} (\cos r - 1) = -1/2$ there are $C_1,C_2>0$ and $b>0$ such that
$$C_1 r^{2} \leq 1- \cos r \leq C_2 r^{2}, \quad \forall\; r\in [-b,b].$$
We then conclude for $z\in \bR$ that
\begin{eqnarray*}
\Re (\Psi_\mu(z)) &  = &  \int_\bR (\cos (xz) - 1) \, \nu^+(\di x) + \int_\bR (1- \cos(xz)) \, \nu^-(\di x) \\
& \leq & \int_{|x| \leq b/|z|} (\cos (xz) - 1) \, \nu^+(\di x) \\
& & + \int_{|x|\leq b/|z|} (1-\cos (xz)) \, \nu^-(\di x) + \int_{|x|> b/|z|} (1-\cos (xz)) \, \nu^-(\di x) \\
& \leq & -C_1 z^2 \int_{|x|\leq b/|z|} x^2 \, \nu^+(\di x)  + C_2 z^2 \int_{|x|\leq b/|z|} x^2 \, \nu^-(\di x) + 2 \, \nu^- (\{ x: |x| > b/|z|\}).
\end{eqnarray*}
Denoting the $\liminf$ in \eqref{eq-density1} by $D_1$, we obtain
$$\int_{|x| \leq b/|z|} x^2\, \nu^+(\di x) \geq \frac{D_1}{2} b^\beta |z|^{-\beta} \quad \mbox{and} \quad
\int_{|x|\leq b/|z|} x^2 \, \nu^-(\di x) \leq \frac{C_1 D_1}{4 C_2} b^\beta |z|^{-\beta}$$
for large enough $|z|$, so that
\begin{equation} \label{eq-density3}
\Re (\Psi_\mu(z)) \leq - \frac{C_1 D_1}{4} b^\beta |z|^{2-\beta} + 2 \nu^- (\{x: |x|> b/|z|\}, \quad |z| \; \mbox{large}.
\end{equation} To tackle the last term, write $G(r) := \int_{|x| \leq r} x^2 \, \nu^-(\di x)$ for $r>0$.
 Using partial integration, we can write
\begin{eqnarray*}
\nu^- (\{ x : b/|z| < |x| \leq 1\}) & = & \int_{(b/|z|,1]} x^{-2} \, G(\di x) \\
& = & G(1) - b^{-2} z^2 G(b/|z|) -  \int_{b/|z|}^1 G(x) \, \di x^{-2}.
\end{eqnarray*}
By \eqref{eq-density1}, for every $\varepsilon>0$ we can find $K(\varepsilon) > 0$ such that the above can be bounded from above by
$$G(1) +  \int_{b/|z|}^1 (\varepsilon x^\beta) 2x^{-3}  \, \di x= G(1) - \frac{2\varepsilon}{2-\beta} + \frac{2\varepsilon}{2-\beta} b^{\beta-2} |z|^{2-\beta} , \quad \forall\; |z|\geq K(\varepsilon).$$
Together with \eqref{eq-density3} this implies that there is  $K>0$ such that
$$|\widehat{\mu}(z)| = \exp ( \Re (\Psi_\mu(z))) \leq \exp \left( -\frac{C_1D_1}{8} b^\beta |z|^{2-\beta}\right) , \quad \forall\; |z|\geq K.$$ As in the case $a>0$, this implies that $\int_{\bR} |\widehat{\mu}(z)| \, |z|^n \, \di z < \infty$ for all $n\in \bN,$ giving the claim.
\end{proof}

Turning to continuity, recall that an infinitely divisible distribution is continuous if and only if the Gaussian variance is non-zero or the L\'evy measure is infinite (e.g. \cite[Thm. 27.4]{Sa}). We do not know if an analogous statement holds for quasi-infinitely divisible distributions, but at least we have the following result:

\begin{prop} \label{prop-cont}
Let $\mu$ be a quasi-infinitely divisible distribution with characteristic triplet $(a,\nu,\gamma)_c$ with respect to some $c$.\\
(a) If $a=0$ and $\nu^+(\bR) < \infty$, then $\nu^-(\bR) < \infty$ and $\mu$  is not continuous.\\
(b) Conversely, if $\mu$ is not continuous, then $a=0$, and if additionally $\nu^-(\bR) < \infty$, then $\nu^+(\bR) < \infty$.
\end{prop}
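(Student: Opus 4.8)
The plan is to derive both parts from the factorisation $\mu\ast\mu^-=\mu^+$, where $\mu^-$ and $\mu^+$ denote the infinitely divisible distributions with characteristic triplets $(0,\nu^-,0)_c$ and $(a,\nu^+,\gamma)_c$, respectively, so that $\widehat{\mu}\,\widehat{\mu^-}=\widehat{\mu^+}$ by Remark \ref{rem-conv}(a) (exactly as in the factorisation discussion of Section~1), together with two elementary facts: the convolution of a continuous distribution with any distribution is continuous, and the convolution of two distributions each possessing an atom again possesses an atom. Both facts are immediate from $(\rho_1\ast\rho_2)(\{x\})=\int_{\bR}\rho_1(\{x-y\})\,\rho_2(\di y)$.

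For part (a) I would first invoke the last assertion of Lemma \ref{lem-de2}: since $a=0$ and $\nu^+(\bR)<\infty$, it follows that $\nu^-(\bR)<\infty$. Then $\mu^-$ is a bona fide probability distribution and $\mu\ast\mu^-\sim\qid(0,\nu^+,\gamma)_c$ is infinitely divisible with zero Gaussian variance and \emph{finite} quasi-L\'evy measure $\nu^+$; such a distribution is a (possibly shifted) compound Poisson distribution and hence has an atom. If $\mu$ were continuous, then $\mu\ast\mu^-$ would be continuous, a contradiction; thus $\mu$ is not continuous.

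For the first assertion of part (b), I would note that if $a>0$ then Theorem \ref{thm-density} already guarantees that $\mu$ has an infinitely differentiable density, so $\mu$ is continuous; contraposition (using $a\geq 0$ from Lemma \ref{lem-a}) yields $a=0$ whenever $\mu$ is not continuous. For the second assertion, assume $\mu$ is not continuous, so it has an atom, and $\nu^-(\bR)<\infty$; by what has just been shown, $a=0$. Suppose, for contradiction, that $\nu^+(\bR)=\infty$. Then $\mu^-$, being infinitely divisible with finite L\'evy measure and vanishing Gaussian variance, is again a (shifted) compound Poisson distribution and so has an atom, whereas $\mu\ast\mu^-\sim\qid(0,\nu^+,\gamma)_c$ is infinitely divisible with vanishing Gaussian variance and \emph{infinite} L\'evy measure, hence continuous by the standard continuity criterion for infinitely divisible distributions (\cite[Thm.~27.4]{Sa}). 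But $\mu$ and $\mu^-$ both have atoms, so $\mu\ast\mu^-$ must have an atom, a contradiction. Therefore $\nu^+(\bR)<\infty$.

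I do not expect a genuine obstacle here; the only points requiring a little care are checking that $\mu^-$ is a probability distribution and that the characteristic exponents add up correctly so that $\mu\ast\mu^-=\mu^+$ (this is Remark \ref{rem-conv}(a)), recalling precisely when an infinitely divisible distribution with $a=0$ is continuous (iff its L\'evy measure is infinite) as opposed to a shifted compound Poisson with an atom (when its L\'evy measure is finite), and the two elementary convolution facts about atoms.
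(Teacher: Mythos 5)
Your proof is correct and follows essentially the same route as the paper: the factorisation $\mu\ast\mu^-=\mu^+$ with $\mu^-\sim\qid(0,\nu^-,0)_c$ and $\mu^+\sim\qid(a,\nu^+,\gamma)_c$, the continuity criterion of \cite[Thm.~27.4]{Sa} for infinitely divisible laws, Theorem \ref{thm-density} for the conclusion $a=0$ in (b), and the elementary atom/continuity behaviour of convolutions. The only (harmless) deviation is that in (a) you obtain $\nu^-(\bR)<\infty$ from \eqref{eq-measure-estim3} of Lemma \ref{lem-de2}, whereas the paper deliberately re-derives it from the continuity chain and remarks afterwards that Lemma \ref{lem-de2} already gives this fact.
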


\begin{proof}
Let $X,Y$ and $Z$ be random variables such that $\law(X) = \mu$, such that $\law(Y)$ and $\law(Z)$ are infinitely divisible with
characteristic triplets $(0,\nu^-,0)_c$ and $(a,\nu^+,\gamma)_c$, respectively, and such that $X$ and $Y$ are independent. Then \eqref{eq-factor1} holds.

(a) If $a=0$ and $\nu^+(\bR) < \infty$, then  $Z$ is not continuous by \cite[Thm. 27.4]{Sa}. It follows that neither  $X$ nor $Y$ can be continuous (e.g. \cite[Lemma 27.1]{Sa}), and hence $\nu^-(\bR) < \infty$ (again, \cite[Thm. 27.4]{Sa}).

(b)If $\mu = \law(X)$ is not continuous, then $a=0$ by Theorem \ref{thm-density}.  If additionally $\nu^-(\bR) < \infty$, then $\law(Y)$ is not continuous, hence also  $Z$ is not continuous which implies that  $\nu^+(\bR) < \infty$.
\end{proof}

The fact that $a=0$ together with $\nu^+(\bR) < \infty$ implies $\nu^-(\bR) < \infty$ was already observed in Lemma \ref{lem-de2} (together with the sharper estimate \eqref{eq-measure-estim3}), but here we gave a  different proof of this fact.


\section{Distributions concentrated on the integers} \label{S-integers}

In this section we show in Theorem \ref{thm-integers} that a distribution concentrated on $\bZ$ (i.e.~with support being a subset of $\bZ$) is quasi-infinitely divisible if and only if its characteristic function has no zeroes, thus generalising Theorem \ref{thm-finite}. Unlike the proof of Theorem \ref{thm-finite}, which followed in a somewhat elementary way, the proof of Theorem \ref{thm-integers} is more complicated and uses the Wiener-L\'evy theorem on absolutely summable Fourier series, as well as Theorem \ref{thm-finite}. We shall further characterise weak convergence, moment and support conditions for distributions concentrated on the integers in terms of the characteristic triplet, and obtain sharper results than the general results in Sections \ref{S4} -- \ref{S6}.

Recall that to every continuous function $f\cl \bR \to \bC$ with $f(z) \neq 0$ for all $z\in \bR$ and $f(0) = 1$ there exists a unique continuous function $g$ with $g(0) = 0$ and $\exp (g(z)) = f(z)$ for all $z\in \bR$, called the \emph{distinguished logarithm} of $f$ (e.g. \cite[Lem. 7.6]{Sa}). For a $2\pi$-periodic locally Lebesgue-integrable function $f:\bR \to \bC$, we denote its $n$'th Fourier coefficient by
$$b_n(f) = \frac{1}{2\pi} \int_0^{2\pi} \re^{-\ri n z} f(z) \, \di z, \quad n\in \bZ,$$
and its Fourier series by $\sum_{n\in \bZ} b_n(f) \re^{\ri n z}$. When the Fourier coefficients of $f$ are absolutely summable, then the Fourier series will converge uniformly to $f$, hence $f$ must necessarily be continuous in that case. The set of all $2\pi$-periodic continuous functions $f:\bR \to \bC$ with $\sum_{n\in \bZ} |b_n(f)| < \infty$ forms a commutative Banach algebra with one,  the so-called Wiener algebra ${A} (\mathbb{T})$, where the norm is given by $\|f\|_{{A}(\mathbb{T})} = \sum_{n\in \bZ} |b_n(f)|$, the multiplication is the pointwise multiplication of functions and the one (i.e. the unit) is the function $\one_\bR$ (e.g. Gr\"ochenig \cite[Lem. 5.4]{Groechenig2}).
We now have:

\begin{thm} \label{thm-integers}
Let $\mu = \sum_{n\in \bZ} a_n \delta_n$ be a distribution concentrated on $\bZ$. Then $\mu$ is quasi-infinitely divisible if and only if its characteristic function does not have zeroes. In that case, the Gaussian variance of $\mu$ is zero, the quasi-L\'evy measure $\nu$ of $\mu$ is finite and concentrated on $\bZ$, and the drift lies in $\bZ$. More precisely, if $g\cl \bR \to \bC$ is the distinguished logarithm of $\widehat{\mu}$, then the drift of $\mu$ is $k = (2\pi \ri)^{-1} g(2\pi) \in \bZ$, the function $\widetilde{g}:\bR \to \bC$ defined by $\widetilde{g}(z) = g(z) - \ri k z$ is $2\pi$-periodic, and the quasi-L\'evy measure of $\mu$ is given by $\nu = \sum_{n\in \bZ, n\neq 0} b_n \delta_n$, where
\begin{equation} \label{eq-integers1}
b_n = b_n(\widetilde{g}) = \frac{k}{n} + \frac{1}{2\pi} \int_0^{2\pi} \re^{-\ri n z} g(z) \, \di z \in \bR, \quad n\in \bZ \setminus \{0\},
\end{equation}
is the $n$'th Fourier coefficient of $\widetilde{g}$.
\end{thm}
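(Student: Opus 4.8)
The plan is as follows. One direction is free: if $\mu$ is quasi-infinitely divisible then $\widehat\mu=\exp(\Psi_\mu)$ never vanishes. For the converse assume $\widehat\mu(z)\neq0$ for all $z$. Since $\sum_n|a_n|=1$, the function $\widehat\mu(z)=\sum_{n\in\bZ}a_n\re^{\ri nz}$ lies in the Wiener algebra $A(\mathbb{T})$; it is continuous, $2\pi$-periodic, nowhere zero and equals $1$ at $0$, so it has a distinguished logarithm $g$. Because $\widehat\mu$ is $2\pi$-periodic, $z\mapsto g(z+2\pi)-g(z)$ is continuous and $2\pi\ri\bZ$-valued, hence the constant $g(2\pi)=2\pi\ri k$ with $k\in\bZ$; thus $\widetilde g(z):=g(z)-\ri kz$ is $2\pi$-periodic and is the distinguished logarithm of $h(z):=\widehat\mu(z)\re^{-\ri kz}$, the characteristic function of $\mu\ast\delta_{-k}$. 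By Remark \ref{rem-conv}(b) it is enough to prove the claim for $\mu\ast\delta_{-k}$, so I would reduce at once to the case $k=0$, i.e.\ $g$ itself $2\pi$-periodic.

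The core of the argument is to show that then $g\in A(\mathbb{T})$. First, Wiener's theorem gives $1/\widehat\mu\in A(\mathbb{T})$. Since the partial Fourier sums of $\widehat\mu$ converge in $A(\mathbb{T})$-norm, I would pick $N$ so large that $p(z):=\sum_{|n|\le N}a_n\re^{\ri nz}$ satisfies $\|\widehat\mu-p\|_{A(\mathbb{T})}<\|1/\widehat\mu\|_{A(\mathbb{T})}^{-1}$; then $\|1-p/\widehat\mu\|_{A(\mathbb{T})}<1$, so $-\sum_{m\ge1}m^{-1}(1-p/\widehat\mu)^m$ converges in $A(\mathbb{T})$ and provides a $2\pi$-periodic logarithm of $p/\widehat\mu$ inside $A(\mathbb{T})$. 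Also $\|\widehat\mu-p\|_\infty<\inf|\widehat\mu|$ makes $p$ nowhere zero and homotopic to $\widehat\mu$ through nonvanishing loops, so $p$ has winding number $0$; moreover $p(0)=\sum_{|n|\le N}a_n>0$ for large $N$, so $\widehat\sigma:=p/p(0)$ is the characteristic function of a probability distribution $\sigma$ supported on a finite subset of $\bZ$. By Theorem \ref{thm-finite} together with Remark \ref{rem-conv}(b) (Corollary \ref{cor-finite3} for the Gaussian-variance and finiteness part), $\sigma$ is quasi-infinitely divisible with Gaussian variance $0$, integer drift $\ell$, and finite quasi-L\'evy measure concentrated on $\bZ$, whence $\log\widehat\sigma(z)=\ri\ell z+\sum_{n\ne0}\beta_n(\re^{\ri nz}-1)$ with real $\beta_n$ and $\sum|\beta_n|<\infty$; this function has winding number $\ell$, but $\widehat\sigma=p/p(0)$ has winding number $0$, so $\ell=0$ and $\log\widehat\sigma\in A(\mathbb{T})$. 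Hence $\log p=\log\widehat\sigma+\log p(0)\in A(\mathbb{T})$, and subtracting from it the periodic logarithm of $p/\widehat\mu$ found above yields a $2\pi$-periodic logarithm of $\widehat\mu$ in $A(\mathbb{T})$; since it differs from $g$ by a constant in $2\pi\ri\bZ$, indeed $g\in A(\mathbb{T})$.

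Granting $g\in A(\mathbb{T})$, write $g(z)=\sum_{n\in\bZ}b_n\re^{\ri nz}$ with $\sum|b_n|<\infty$. Taking complex conjugates in $\re^{g(z)}=\widehat\mu(z)$ and using $\overline{\widehat\mu(z)}=\widehat\mu(-z)$ plus uniqueness of distinguished logarithms gives $\overline{g(z)}=g(-z)$, which (together with $2\pi$-periodicity) forces $b_n\in\bR$; alternatively the reality of the $b_n$ may be extracted from the real quasi-L\'evy measures of the approximating $\sigma$'s, which is where Theorem \ref{thm-finite} enters as flagged in the paper. From $g(0)=0$ one gets $\sum_n b_n=0$, so $\nu:=\sum_{n\ne0}b_n\delta_n$ is a finite signed measure on $\bZ$ with $\int(1\wedge x^2)|\nu|(\di x)\le\sum_{n\ne0}|b_n|<\infty$, and $g(z)=\sum_{n\ne0}b_n(\re^{\ri nz}-1)=\int_\bR(\re^{\ri zx}-1)\,\nu(\di x)$. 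Thus $\widehat\mu=\exp(\int_\bR(\re^{\ri zx}-1)\,\nu(\di x))$, so $\mu$ is quasi-infinitely divisible with Gaussian variance $0$, drift $0$ and finite quasi-L\'evy measure concentrated on $\bZ\setminus\{0\}$. Undoing the initial shift via Remark \ref{rem-conv}, the original $\mu$ has drift $k=(2\pi\ri)^{-1}g(2\pi)\in\bZ$ and the same quasi-L\'evy measure, now read from $\widetilde g$; the asserted formula $b_n=k/n+\frac1{2\pi}\int_0^{2\pi}\re^{-\ri nz}g(z)\,\di z$ then follows from $\frac1{2\pi}\int_0^{2\pi}z\,\re^{-\ri nz}\,\di z=\ri/n$ for $n\ne0$.

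I expect the step $g\in A(\mathbb{T})$ to be the main obstacle. One cannot simply invoke the Wiener--L\'evy theorem for $\log$: even when the loop $z\mapsto\widehat\mu(z)$ has winding number $0$, its image may surround the origin, so no branch of $\log$ is holomorphic on a neighbourhood of the range. The workaround above --- Wiener's theorem, an $A(\mathbb{T})$-norm approximation by a trigonometric polynomial, and Theorem \ref{thm-finite} to handle the polynomial factor --- hinges on the bookkeeping that the vanishing winding number forces the drift $\ell$ of the approximant to be $0$, which is exactly what makes $\log p$ genuinely periodic and hence a member of $A(\mathbb{T})$.
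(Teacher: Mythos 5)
Your proof is correct, but in the two places where the real work happens it takes a genuinely different route from the paper. For the crucial step that the periodic logarithm has absolutely summable Fourier coefficients, the paper simply cites a Wiener--L\'evy-type lemma for the logarithm of an index-zero, non-vanishing element of $A(\mathbb{T})$ (Calder\'on--Spitzer--Widom, also Di Bucchianico), whereas you prove the needed special case directly: Wiener's $1/f$ theorem, a trigonometric-polynomial approximation $p$ of $\widehat\mu$ in the $A(\mathbb{T})$-norm, the Neumann-type series $-\sum_m m^{-1}(1-p/\widehat\mu)^m$ for the periodic logarithm of $p/\widehat\mu$, and Theorem \ref{thm-finite} (via Corollary \ref{cor-finite3} and Remark \ref{rem-conv}(b)) for the polynomial factor, with the winding-number bookkeeping correctly forcing the drift $\ell$ of the normalized polynomial to vanish so that $\log p\in A(\mathbb{T})$; this makes the argument self-contained modulo Wiener's lemma, and it reuses Theorem \ref{thm-finite} exactly where the paper instead leans on the cited lemma (whose proof uses a half-plane/principal-branch trick). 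Second, for the reality of the coefficients $b_n$ the paper runs a comparatively heavy approximation argument (perturbing the truncated distributions as in Theorem \ref{thm-dense}, applying Theorem \ref{thm-finite}, and using locally uniform convergence of distinguished logarithms), while you observe that $\overline{\widehat\mu(z)}=\widehat\mu(-z)$ and uniqueness of the distinguished logarithm give $\overline{g(-z)}=g(z)$, hence $\overline{\widetilde g(-z)}=\widetilde g(z)$ and real Fourier coefficients; this symmetry argument is valid and noticeably simpler than the paper's. The remaining steps (reduction to drift $k=0$ via $\mu\ast\delta_{-k}$, $\sum_n b_n=0$ from $g(0)=0$, and the computation $\frac{1}{2\pi}\int_0^{2\pi} z\,\re^{-\ri n z}\,\di z=\ri/n$ yielding \eqref{eq-integers1}) coincide with the paper's in substance, so the proposal stands as a correct alternative proof.
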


\begin{proof}
It is clear that the characteristic function of a  quasi-infinitely divisible distribution cannot have zeroes.
Hence we only need to show the converse. Suppose that $\widehat{\mu}$ has no zeroes.
Denote by $g\cl \bR \to \bC$ the distinguished logarithm of $\widehat{\mu}$.
Observe that $\widehat{\mu}(z) = \sum_{n\in \bZ} a_n \re^{\ri nz}$ is $2\pi$-periodic. Hence $\re^{g(2\pi)} = \widehat{\mu}(2\pi) = \widehat{\mu}(0) = 1$ so that $g(2\pi) \in 2\pi \ri \bZ$. Define
$$k= (2\pi \ri)^{-1} g(2\pi)\in \bZ$$
and $\widetilde{g}\cl \bR \to \bC$ by
$\widetilde{g}(z) = g(z) - \ri k z$.
Then $\widetilde{g}$ is continuous, $\widetilde{g}(0) = 0$ and
$$\exp (\widetilde{g}(z)) = \exp (g(z)) \, \exp (-\ri k z) = \widehat{\mu}(z) \, \widehat{\delta_{-k}}(z)
=(\mu \ast \delta_{-k})\widehat{ { } }\; (z).$$
If follows that $\widetilde{g}$ is the distinguished logarithm of the characteristic function
of the discrete distribution $\widetilde{\mu} = \mu \ast \delta_{-k} = \sum_{n\in \bZ} a_n \delta_{n-k}$. Define a $2\pi$-periodic
function $h\cl \bR \to \bC$ by $h(z) = \widetilde{g}(z)$ for $z\in [0,2\pi)$.
Since $\widetilde{g}(2\pi) = 0 = \widetilde{g}(0)$, the function $h$ is continuous.
Since $\widehat{\widetilde{\mu}}$ is $2\pi$-periodic, and $\re^{h(z)} = \re^{\widetilde{g}(z)} = \widehat{\widetilde{\mu}}(z)$ for $z\in [0,2\pi)$ we also have $\re^{h(z)} = \widehat{\widetilde{\mu}}(z)$ for all $z\in \bR$. Hence $h$ is also a distinguished logarithm of $\widehat{\widetilde{\mu}}$, and the uniqueness of the distinguished logarithm gives $h=\widetilde{g}$, consequently $\widetilde{g}$ is $2\pi$-periodic. Since $\widetilde{g}$ is a logarithm of $\widehat{\widetilde{\mu}}$, the fact that $\widetilde{g}(2\pi) = \widetilde{g}(0)$ means that $(\widehat{\widetilde{\mu}}(z))_{z\in [0,2\pi]}$ has index 0 (see \cite[Def. 3.1]{DiBucchianico} for the notion of the index). Denote by $b_n  = b_n(\widetilde{g})$, $n\in \bZ$,  the Fourier coefficients of $\widetilde{g}$, which may be complex.  Since the Fourier coefficients of $\re^{\widetilde{g}}= \widehat{\widetilde{\mu}}$ are absolutely summable (the $m$'th Fourier coefficient is $a_{m+k}$), and since $(\widehat{\widetilde{\mu}}(z))_{z\in [0,2\pi]}$ has index 0, it now follows that also $\sum_{n\in \bZ} |b_n| < \infty$; this is a consequence of the Wiener-L\'evy theorem for holomorphic transformations of functions in the Wiener algebra, and proved in the needed form for the logarithm in Calder\'on et al. \cite[Lemma in Section 2]{Calderon}; see also \cite[Thm. 3.4]{DiBucchianico}. It then follows that
\begin{eqnarray*}
\widehat{\mu}(z) & = & \widehat{\delta_k}(z) \widehat{\widetilde{\mu}}(z) = \re^{\ri k z} \, \re^{\widetilde{g}(z)} \\
& = & \exp \left( \ri k z + \sum_{n\in \bZ} b_n \re^{\ri n z} \right) \\
& = & \exp \left( \ri k z + \sum_{n\in \bZ, n\neq 0} b_n (\re^{\ri n z} - 1)\right) \exp \left(\sum_{n\in \bZ} b_n \right), \quad z\in \bR.
\end{eqnarray*}
Setting $z=0$ in the above equation gives $\exp ( \sum_{n\in \bZ} b_n ) = \widehat{\mu}(0) = 1$, so that $\mu$ is quasi-infinitely divisible with Gaussian variance 0, drift $k$ and quasi-L\'evy measure $\nu = \sum_{n\in \bZ, n\neq 0} b_n \delta_n$, provided we can show that the $b_n$ are real. Since $b_n$ is the $n$'th Fourier coefficient of $\widetilde{g}(z) = g(z) - \ri k z$, it follows that \begin{eqnarray*}
b_n 
& = & \frac{1}{2\pi} \int_0^{2\pi} \re^{- \ri n z } g(z) \, \di z - \frac{\ri k}{2\pi} \int_0^{2\pi} \re^{- \ri n z} z \, dz
 =  \frac{1}{2\pi} \int_0^{2\pi} \re^{- \ri n z } g(z) \, \di z + \frac{k}{n}, \quad n\neq 0,
\end{eqnarray*}
i.e.\ $b_n$ has the form stated in \eqref{eq-integers1}. It remains to show that the Fourier coefficients $b_n$ are real. To do so, observe that the sequence of probability measures \linebreak
$\left((\sum_{n=-m}^m a_n)^{-1} \sum_{m=-n}^m a_n \delta_n\right)_{m\in \bN}$ converges weakly to $\mu$. By modifying the coefficients slightly as in Equation \eqref{eq-modified} in the proof of Theorem \ref{thm-dense}, it follows that there is a sequence $(\mu_m)_{m\in \bN}$ of distributions converging weakly to $\mu$ such that $\mu_m$ is concentrated on $\{-m,\ldots, m\}$ and such that the characteristic function of $\mu_m$ has no zeroes. By Theorem \ref{thm-finite}, each $\mu_m$ is quasi-infinitely divisible with Gaussian variance 0 and quasi-L\'evy measure $\nu_m$ concentrated on $\bZ$. Denote by  $g_m$ the distinguished logarithm of $\widehat{\mu_m}$ and by $k_m$ the drift of $\mu_m$.
Then
$$g_m(z) = \ri k_m z + \sum_{n\in \bZ, n\neq 0} (\re^{\ri n z} - 1) \, \nu_m(\{n\}), \quad z\in \bR,$$
in particular $k_m = (2\pi \ri)^{-1} g_m(2\pi)$ and
\begin{eqnarray*}
b_{m,n} & := & \frac{1}{2\pi} \int_0^{2\pi} \re^{-\ri nz} (g_m(z) - \ri k_m z)\, \di z  \\
& = & \frac{1}{2\pi} \int_0^{2\pi} \re^{-\ri n z} \left( \sum_{j\in \bZ, j\neq 0} (\re^{\ri j z} - 1) \, \nu_m(\{j\}) \right) \, \di z = \nu_m(\{n\}), \quad n\in \bZ \setminus \{0\}.
\end{eqnarray*}
Since $g_m$ converges uniformly on compact subsets of $\bR$ to $g$ as $m\to\infty$ (cf. \cite[Lem. 7.7]{Sa}), also $(z\mapsto g_m(z) - \ri k_m z)_{m\in \bN}$ converges uniformly on compacta to $\widetilde{g}$, hence $\nu_m(\{n\}) = b_{m,n} \to b_n$ as $m\to\infty$ for each $n\in \bZ \setminus \{0\}$. But $\nu_m(\{n\})$ is a real number, hence $b_n$ is real, too. This finishes the proof.
\end{proof}

\begin{cor} \label{c-integers2}
Let $\mu$ be a distribution concentrated on a lattice of the form $r+h\bZ$ with $r\in \bR$ and $h>0$. Then $\mu$ is quasi-infinitely
divisible if and only if its characteristic function has no zeroes. In this case, the quasi-L\'evy measure of $\mu$ is finite and
the Gaussian variance is $0$.
\end{cor}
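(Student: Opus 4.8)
The plan is to reduce the assertion to Theorem~\ref{thm-integers} by an affine change of variable, in the same way that Corollary~\ref{cor-finite3} was obtained from Theorem~\ref{thm-finite}.

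The necessity of the zero-free condition is immediate, since the characteristic function of a quasi-infinitely divisible distribution never vanishes (as observed after Definition~\ref{def-LM3}). For the converse, assume that $\widehat{\mu}$ has no zeroes, let $X$ be a random variable with $\law(X)=\mu$, and set $Y=h^{-1}(X-r)$, so that $\law(Y)$ is concentrated on $\bZ$. First I would note that $\widehat{\law(Y)}(z)=\re^{-\ri zr/h}\,\widehat{\mu}(z/h)$, which is nonzero for all $z\in\bR$ because $|\re^{-\ri zr/h}|=1$ and $\widehat{\mu}$ has no zeroes. By Theorem~\ref{thm-integers}, $\law(Y)$ is therefore quasi-infinitely divisible with Gaussian variance $0$ and finite quasi-L\'evy measure $\nu_Y$ (which is moreover concentrated on $\bZ$).

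It then remains to transfer these properties back to $\mu$. Since $X=hY+r$ with $h\neq 0$, Remark~\ref{rem-conv}(b) shows that $\mu=\law(X)$ is quasi-infinitely divisible with Gaussian variance $0\cdot h^2=0$ and with quasi-L\'evy measure $\overline{\nu_Y}$ given by $\overline{\nu_Y}(B)=\nu_Y(h^{-1}B)$; the latter is again a finite signed measure because $|\overline{\nu_Y}|(B)=|\nu_Y|(h^{-1}B)$ and $|\nu_Y|(\bR)<\infty$. This yields all the claims. There is essentially no serious obstacle here: the only points requiring a moment's attention are that the unimodular factor $\re^{-\ri zr/h}$ cannot create zeroes of $\widehat{\law(Y)}$, and that dilation by $h$ preserves finiteness of the quasi-L\'evy measure and sends a Gaussian variance of $0$ to $0$.
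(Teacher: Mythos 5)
Your argument is correct and is essentially the paper's own proof: the paper reduces Corollary \ref{c-integers2} to Theorem \ref{thm-integers} by exactly the same affine transformation $Y=h^{-1}(X-r)$ (as in the proof of Corollary \ref{cor-finite3}) and then transfers the properties back via Remark \ref{rem-conv}(b). Your explicit verification that the unimodular factor cannot create zeroes and that dilation preserves finiteness of the quasi-L\'evy measure and the zero Gaussian variance just spells out what the paper leaves implicit.
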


\begin{proof}
This is exactly as the proof of Corollary \ref{cor-finite3}.
\end{proof}

The following shows that a factor of a quasi-infinitely divisible distribution concentrated on $\bZ$ must necessarily be quasi-infinitely divisible:

\begin{cor} \label{cor-factors}
Let $\mu$, $\mu_1$, $\mu_2$ be distributions on $\R$ such that $\mu=\mu_1*\mu_2$.
Suppose that $\mu$ is quasi-infinitely divisible with $\supp (\mu)\subset \Z$.  Then
$\mu_1$ and $\mu_2$ are quasi-infinitely divisible.
\end{cor}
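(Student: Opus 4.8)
The plan is to reduce everything to Corollary \ref{c-integers2}, which says that a distribution concentrated on a lattice of the form $s+h\Z$ is quasi-infinitely divisible precisely when its characteristic function has no zeroes. So two things must be verified: that $\mu_1$ and $\mu_2$ are themselves concentrated on such lattices, and that $\widehat{\mu_1}$ and $\widehat{\mu_2}$ do not vanish.

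First I would pin down the supports. Pick independent random variables $X_1,X_2$ with $\law(X_j)=\mu_j$, so that $\law(X_1+X_2)=\mu$ and hence $X_1+X_2\in\Z$ almost surely. Applying Fubini's theorem to the product measure $\mu_1\times\mu_2$ and the Borel set $\{(x,y)\cl x+y\in\Z\}$ gives
$$
1 = \int_{\R} \mu_1(\Z - y)\,\mu_2(\di y),
$$
so that $\mu_1(\Z-y)=1$ for $\mu_2$-almost every $y$. Fixing one such $y=r$ shows that $\mu_1$ is concentrated on the lattice $\Z-r = -r+\Z$. Since then $X_1\in -r+\Z$ almost surely while $X_1+X_2\in\Z$ almost surely, it follows that $X_2=(X_1+X_2)-X_1\in r+\Z$ almost surely, i.e.\ $\mu_2$ is concentrated on $r+\Z$. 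Both $-r+\Z$ and $r+\Z$ are lattices of the form $s+h\Z$ (with $h=1$), as required by Corollary \ref{c-integers2}.

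Next, since $\mu$ is quasi-infinitely divisible, its characteristic function has no zeroes; from $\widehat{\mu}(z)=\widehat{\mu_1}(z)\,\widehat{\mu_2}(z)$ for all $z\in\R$ it follows at once that neither $\widehat{\mu_1}$ nor $\widehat{\mu_2}$ vanishes anywhere on $\R$. Corollary \ref{c-integers2}, applied to $\mu_1$ (concentrated on $-r+\Z$) and to $\mu_2$ (concentrated on $r+\Z$), then yields that both $\mu_1$ and $\mu_2$ are quasi-infinitely divisible, which completes the proof. There is no serious obstacle here; the only point needing a little care is the measure-theoretic step identifying the common lattice structure of the two factors, and even there all that is actually used is that $\mu_1$ sits on $-r+\Z$ for $\mu_2$-almost every $r$, with the analogous statement for $\mu_2$ following automatically. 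Note that no non-degeneracy assumption is needed, since Dirac measures are quasi-infinitely divisible and have non-vanishing characteristic function, so the argument applies uniformly.
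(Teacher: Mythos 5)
Your proof is correct, and it reaches the conclusion by the same overall reduction as the paper (non-vanishing of $\widehat{\mu_1}$, $\widehat{\mu_2}$ plus concentration on a shifted copy of $\Z$, then Corollary \ref{c-integers2}), but the way you establish the lattice concentration is genuinely different. The paper first invokes Sato's Lemma 27.1 to conclude that $\mu_1$ and $\mu_2$ are discrete, picks atoms $b_j$ with $\mu_j(\{b_j\})>0$, and shows by a positivity-of-probabilities argument that any atom of $\mu_j\ast\delta_{-b_j}$ outside $\Z$ would force an atom of $\mu$ outside $\Z$; discreteness is then needed to pass from \lq\lq all atoms lie in $\Z$\rq\rq\ to $\supp(\mu_j\ast\delta_{-b_j})\subset\Z$. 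Your Fubini argument, $1=\int_\R \mu_1(\Z-y)\,\mu_2(\di y)$ forcing $\mu_1(\Z-r)=1$ for some $r$, bypasses both the discreteness lemma and the atom-chasing, and delivers directly that each factor sits on a single coset of $\Z$; it is slightly more self-contained and would work verbatim for any Borel subgroup in place of $\Z$. What the paper's route buys in exchange is an explicit choice of the shifts (atoms of the factors) and the observation $b_1+b_2\in\Z$, which your argument also yields implicitly since the two cosets are $-r+\Z$ and $r+\Z$. The remaining steps in your write-up (non-vanishing of the factor characteristic functions from $\widehat{\mu}=\widehat{\mu_1}\widehat{\mu_2}$, and the application of Corollary \ref{c-integers2} with $h=1$) are exactly as in the paper and are fine.
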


\begin{proof}
Since $\wh\mu(z)=\wh{\mu_1}(z) \wh{\mu_2}(z)$ and $\wh\mu(z)\ne 0$, neither $\wh{\mu_1}(z)$
nor $\wh{\mu_2}(z)$ have zeroes.  So, it
is enough to show that, for $j=1,2$, there is $b_j\in \R$ such that
$\supp (\mu_j* \dl_{-b_j}) \subset \Z$. Since $\mu$ is discrete, $\mu_1$ and $\mu_2$ are
discrete (\cite[Lem. 27.1]{Sa}).  Choose $b_j\in\R$ such that $\mu_j(\{ b_j\} )>0$.
Let $\mu'_j=\mu_j* \dl_{-b_j}$. Then $\mu'_j(\{ 0\} )>0$ for $j=1,2$ and
$\mu=\mu'_1 *\mu'_2 *\dl_{b_1+b_2}$.  Let $X=X'_1+X'_2 +b_1+b_2$, where $\law (X)=\mu$,
$\law (X'_j)=\mu'_j$ for $j=1,2$ and $X'_1$ and $X'_2$ are independent.  We have
$b_1+b_2\in\Z$, since $P(X=b_1+b_2)\ge P(X'_1=0, X'_2=0)= P(X'_1=0) P( X'_2=0)>0$.
If $\mu_1'(\{ b\})>0$ for some $b\not\in\Z$, then
$P(X=b_1+b_2+b)\ge P(X'_1=b, X'_2=0)= P(X'_1=b) P( X'_2=0)>0$ with $b_1+b_2+b \not\in\Z$ contrary
to the assumption.  Hence $\supp (\mu_1')\subset\Z$.  Similarly $\supp (\mu_2')\subset\Z$.
\end{proof}

It would be interesting to know if factors of arbitrary quasi-infinitely divisible distributions are always quasi-infinitely divisible, which we leave as a topic for further research.

We have seen that although weak convergence of the characteristic pair is sufficient for weak convergence of the quasi-infinitely divisible distribution (Theorem \ref{thm-conv}(a)), it is not necessary (Example \ref{ex-non-conv}), even if the limit distribution is (quasi-) infinitely divisible.
However, for distributions supported on the integers, weak convergence of quasi-infinitely divisible distributions can be characterized
by the weak convergence of the characteristic pair as shown in the following result. Observe that since the quasi-L\'evy measure of a quasi-infinitely divisible distribution supported on the integers is itself supported on $\bZ$ and since the Gaussian variance is 0, the measure $\zeta$ in the characteristic pair coincides with the quasi-L\'evy measure $\nu$ in this case.

\begin{thm} \label{thm-integers-convergence}
 Let $(\mu_m)_{m\in \bN}$ be a sequence of quasi-infinitely divisible distributions concentrated on $\bZ$, $\mu$ a quasi-infinitely divisible distribution concentrated on $\bZ$, $c$ a representation function, and denote the characteristic pairs and triplets of $\mu_m$ and $\mu$ with respect to $c$ by $(\zeta_m,\gamma_m)_c$, $(\zeta,\gamma)_c$, $(0,\nu_m,\gamma_m)_c$ and $(0,\nu,\gamma)_c$, respectively. Denote the drift of $\mu_m$ and $\mu$ by $k_m$ and $k$, respectively. Then the following are equivalent:
\begin{enumerate}
\item[(i)] $\mu_m$ converges weakly to $\mu$ as $m\to\infty$.
\item[(ii)] $k_m$ converges to $k$ as $m\to\infty$ and $\lim_{m\to\infty} \sum_{n\in \bZ} |\nu_m (\{n\}) - \nu(\{n\})| = 0$,
i.e.\ $(\nu_m(\{n\}))_{n\in \bZ}$ converges in ${l}^1$ to $(\nu(\{n\}))_{n\in \bZ}$ as $m\to\infty$.
\item[(iii)] $\gamma_m \to \gamma$ and $\zeta_m \stackrel{w}{\to} \zeta$ as $m\to\infty$.
\end{enumerate}
In particular, for quasi-infinitely divisible distributions $\mu_m$ concentrated on $\bZ$, weak convergence of $\mu_m$ to a quasi-infinitely divisible distribution implies tightness and uniform boundedness of $(\zeta_m)_{m\in \bN}$.
\end{thm}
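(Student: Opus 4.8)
The plan is to establish the cycle (iii) $\Rightarrow$ (i) $\Rightarrow$ (ii) $\Rightarrow$ (iii). Throughout I would use Theorem \ref{thm-integers}, which says that each of $\mu_m$ and $\mu$ has Gaussian variance $0$, finite quasi-L\'evy measure concentrated on $\bZ$, and integer drift; consequently the measure in the characteristic pair equals the quasi-L\'evy measure ($\zeta_m = \nu_m$ and $\zeta = \nu$, since $1\wedge n^2 = 1$ for $n \in \bZ\setminus\{0\}$ and $\zeta_m(\{0\}) = 0$), and by Remark \ref{rem-drift} one has $\gamma_m = k_m + \sum_{n\neq 0} c(n)\,\nu_m(\{n\})$ and likewise for $\gamma$. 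The implication (iii) $\Rightarrow$ (i) is then immediate from Theorem \ref{thm-conv}(a). For (ii) $\Rightarrow$ (iii): if $\sum_{n}|\nu_m(\{n\}) - \nu(\{n\})| \to 0$ then $|\zeta_m - \zeta|(\bR) \to 0$, i.e.\ $\zeta_m \to \zeta$ in total variation, which forces $\zeta_m \stackrel{w}{\to} \zeta$; and since $c$ is bounded, $|\gamma_m - \gamma| \le |k_m - k| + \|c\|_\infty \sum_n |\nu_m(\{n\}) - \nu(\{n\})| \to 0$.

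The real work is (i) $\Rightarrow$ (ii), and here I would follow the route of the proof of Theorem \ref{thm-integers}. Weak convergence $\mu_m \stackrel{w}{\to}\mu$ gives $\widehat{\mu_m} \to \widehat{\mu}$ uniformly on compacta, and since $\widehat{\mu}$ has no zeroes, the distinguished logarithms satisfy $g_m \to g$ uniformly on compacta by \cite[Lem.\ 7.7]{Sa}. Evaluating at $z = 2\pi$ gives $k_m = (2\pi\ri)^{-1} g_m(2\pi) \to (2\pi\ri)^{-1} g(2\pi) = k$, and since these are integers, $k_m = k$ for all large $m$; this already yields the first half of (ii). Setting $\widetilde g_m(z) = g_m(z) - \ri k_m z$ and $\widetilde g(z) = g(z) - \ri k z$, periodicity promotes the local uniform convergence $\widetilde g_m \to \widetilde g$ to uniform convergence on all of $\bR$, and by Theorem \ref{thm-integers} each $\widetilde g_m$ lies in the Wiener algebra $A(\mathbb{T})$ with $b_n(\widetilde g_m) = \nu_m(\{n\})$ for $n \neq 0$. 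Since $\|\widetilde g_m - \widetilde g\|_{A(\mathbb{T})} = \sum_n |b_n(\widetilde g_m) - b_n(\widetilde g)| \ge \sum_{n\neq 0} |\nu_m(\{n\}) - \nu(\{n\})|$, it suffices to upgrade the uniform convergence $\widetilde g_m \to \widetilde g$ to convergence in $A(\mathbb{T})$.

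For this I would argue inside the Banach algebra $A(\mathbb{T})$. Writing $\widetilde\mu_m = \mu_m \ast \delta_{-k}$ for $m$ large (so $k_m = k$) and $\widetilde\mu = \mu \ast \delta_{-k}$, the $n$-th Fourier coefficient of $\widehat{\widetilde\mu_m}$ is the probability mass $\widetilde\mu_m(\{n\})$; since $\widetilde\mu_m \stackrel{w}{\to}\widetilde\mu$ these masses converge pointwise in $n$, hence, by Scheff\'e's lemma for probability measures on the countable set $\bZ$, in $\ell^1$. Thus $\widehat{\widetilde\mu_m} \to \widehat{\widetilde\mu}$ in $A(\mathbb{T})$. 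Now $\widehat{\widetilde\mu}$ is invertible in $A(\mathbb{T})$ by Wiener's theorem, and it has index $0$ because it has the continuous periodic logarithm $\widetilde g \in A(\mathbb{T})$, so $\widehat{\widetilde\mu} = \exp(\widetilde g)$ as an identity in $A(\mathbb{T})$. The exponential map on $A(\mathbb{T})$ has at $\widetilde g$ the derivative $w \mapsto \exp(\widetilde g)\,w$, which is invertible since $\exp(\widetilde g)$ is a unit; hence, by the inverse function theorem in Banach spaces, $\exp$ admits a continuous local inverse $L$ on a neighbourhood of $\widehat{\widetilde\mu}$ with $L(\widehat{\widetilde\mu}) = \widetilde g$. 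Therefore, for all large $m$, $w_m := L(\widehat{\widetilde\mu_m})$ is defined, $w_m \to \widetilde g$ in $A(\mathbb{T})$, and $\exp(w_m) = \widehat{\widetilde\mu_m} = \exp(\widetilde g_m)$; by commutativity $w_m - \widetilde g_m$ is a constant function in $2\pi\ri\bZ\cdot\one$, and since both $w_m$ and $\widetilde g_m$ converge to $\widetilde g$ uniformly, that constant vanishes for large $m$. Hence $\widetilde g_m = w_m \to \widetilde g$ in $A(\mathbb{T})$, which gives $\sum_{n\neq 0}|\nu_m(\{n\}) - \nu(\{n\})| \to 0$ and completes (ii). The final \lq\lq in particular\rq\rq~ is then clear: from (ii) we have $|\zeta_m - \zeta|(\bR) \to 0$, so $\sup_m |\zeta_m|(\bR) < \infty$ and, $|\zeta|$ being a finite measure, $(\zeta_m)_{m\in\bN}$ is tight.

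The main obstacle is precisely the last step: passing from uniform convergence of the distinguished logarithms $\widetilde g_m$ to convergence in the Wiener-algebra norm. Uniform convergence of a sequence in $A(\mathbb{T})$ does not by itself imply norm convergence there, so one genuinely needs the Banach-algebra machinery — invertibility via Wiener's theorem, the identity-component (index $0$) condition, and local invertibility of the exponential map — together with the branch-tracking that identifies the abstract local inverse $w_m$ with the correct logarithm $\widetilde g_m$. Everything else reduces to routine bookkeeping with characteristic pairs and the results of Sections \ref{S4} and \ref{S-integers}.
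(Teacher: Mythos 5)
Your proposal is correct, and its skeleton coincides with the paper's: the cycle (iii)$\Rightarrow$(i)$\Rightarrow$(ii)$\Rightarrow$(iii), with (iii)$\Rightarrow$(i) from Theorem \ref{thm-conv}(a), (ii)$\Rightarrow$(iii) by direct computation with $\zeta_m=\nu_m$, $\zeta=\nu$ and $\gamma_m=k_m+\sum_n c(n)\nu_m(\{n\})$, and the main implication (i)$\Rightarrow$(ii) via distinguished logarithms (giving $k_m\to k$ and reduction to drift $0$), the Scheff\'e argument giving $\widehat{\mu}_m\to\widehat{\mu}$ in the Wiener algebra $A(\mathbb{T})$, Wiener's theorem for invertibility of $\widehat{\mu}$, and Theorem \ref{thm-integers} to read off the quasi-L\'evy masses as Fourier coefficients of the periodic logarithm. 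The only point where you genuinely deviate is the device used to upgrade uniform convergence of the logarithms to $A(\mathbb{T})$-norm convergence: you invoke the inverse function theorem for $\exp$ on the Banach algebra $A(\mathbb{T})$ and then remove the $2\pi\ri\bZ$ ambiguity by the constancy of the continuous $2\pi\ri\bZ$-valued difference, whereas the paper works with the explicit principal-branch series $h_m=-\sum_{n\geq 1} n^{-1}\bigl(1-\widehat{\mu}_m/\widehat{\mu}\bigr)^n$, which converges in $A(\mathbb{T})$ as soon as $\|1-\widehat{\mu}_m/\widehat{\mu}\|_{A(\mathbb{T})}<1$ and is identified with $g_m-g$ by uniqueness of the distinguished logarithm; the two arguments are equivalent in substance (your local inverse $L$ is exactly this logarithm series), the paper's being self-contained and quantitative ($\|g_m-g\|_{A(\mathbb{T})}\leq \varepsilon/(1-\varepsilon)$), yours a bit more abstract but quicker to state. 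One detail you should add in (iii)$\Rightarrow$(i): Theorem \ref{thm-conv}(a) assumes the representation function $c$ to be continuous, while the present theorem allows an arbitrary $c$; as the paper remarks, this is harmless here because one may modify $c$ between the integers to make it continuous without changing $\gamma$, $\gamma_m$ or any of the relevant integrals, since $\zeta_m$ and $\zeta$ are concentrated on $\bZ$.
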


\begin{proof}
To show that (i) implies (ii), denote the distinguished logarithms of $\widehat{\mu}_m$ and $\widehat{\mu}$ by $g_m$ and $g$, respectively. Then $g_m$ converges uniformly on compact sets to $g$, cf. \cite[Lem. 7.7]{Sa}. Hence $k_m = (2\pi \ri)^{-1} g_m(2\pi) \to (2\pi \ri)^{-1} g(2\pi) = k$ as $m\to\infty$ by Theorem \ref{thm-integers}. Hence also $\mu_m \ast \delta_{-k_m} \stackrel{w}{\to} \mu \ast \delta_{-k}$ as $m\to\infty$, and $\mu_m \ast \delta_{-k_m}$ and $\mu \ast \delta_{-k}$ have drift 0 and quasi-L\'evy measures $\nu_m$ and $\nu$, respectively. Hence, for proving (ii), we will assume that $k_m = k = 0$ for all $m\in \bN$, so that $g_m(2\pi) = g(2\pi) = 0$. Since $\widehat{\mu}_m \to \widehat{\mu}$ uniformly (both are $2\pi$-periodic), we have $\sup_{z\in\bR} \left| (\widehat{\mu}_m (z) / \widehat{\mu}(z)) -1 \right| < 1/2$ for
large enough $m$. Then for large $m$, the logarithmic expansion
\begin{equation} \label{eq-log-Wiener}
h_m(z) := -\sum_{n=1}^\infty \frac{1}{n} \left( 1-\frac{\widehat{\mu}_m (z)}{\widehat{\mu}(z)} \right)^n , \quad z\in \bR,
\end{equation}
of the principal branch of the logarithm of $\widehat{\mu}_m (z) / \widehat{\mu}(z)$ converges uniformly.
Then
$$\exp (h_m(z)) = \frac{\widehat{\mu}_m(z)}{\widehat{\mu}(z)} = \exp (g_m(z) - g(z)), \quad z\in \bR,$$
for large $m$, and since $h_m$ is continuous with $h_m(0) = 0$, as is $g_m - g$, the uniqueness of the distinguished logarithm shows that
\begin{equation} \label{eq-log-Wiener2}
h_m(z) = g_m (z) - g(z), \quad \forall\; z\in \bR\q \text{ for $m$ large.}
\end{equation}
Write $\mu = \sum_{n\in \bZ} a_n \delta_n$ and $\mu_m = \sum_{n\in \bZ} a_{n,m} \delta_n$. Since $\mu_m \stackrel{w}{\to} \mu$ as $m\to\infty$ we have $a_{n,m} \to a_n$ for each $n\in \bZ$ as $m\to\infty$, and since $\sum_{n\in \bZ} a_{n,m} = \sum_{n\in \bZ} a_n = 1$ and all coefficients are non-negative, it follows that also $\sum_{n\in \bZ} |a_{n,m} - a_n| \to 0 $ as $m\to\infty$.
But $\widehat{\mu}_m(z) = \sum_{n\in \bZ} a_{n,m} \re^{\ri n z}$ and $\widehat{\mu}(z) = \sum_{n\in \bZ} a_n \re^{\ri n z}$, hence $a_{n,m} = b_n (\widehat{\mu}_m)$ and $a_n = b_n(\widehat{\mu})$. Altogether, we conclude that $\widehat{\mu}_m$ converges to $\widehat{\mu}$ in the $A(\mathbb{T})$-norm. Since $A(\mathbb{T})$ is a Banach algebra, this also implies that $\widehat{\mu}_m / \widehat{\mu}$ converges to 1 in the $A(\mathbb{T})$-norm as $m\to\infty$. In particular, for each $\varepsilon \in (0,1)$, there is $N(\varepsilon) \in \bN$ such that
$\| 1- (\widehat{\mu}_m / \widehat{\mu}) \|_{A(\mathbb{T})} < \varepsilon$ for all $m\geq N(\varepsilon)$, so that the series defining $h_m$ in \eqref{eq-log-Wiener} converges also in the $A(\mathbb{T})$-norm (to the same limit, since $\sup_{z\in \bR} |\psi(z)|\leq \|\psi\|_{A(\mathbb{T})} $ for $\psi \in A(\mathbb{T})$) and we have $\|h_m\|_{A(\mathbb{T})} \leq \sum_{n=1}^\infty n^{-1} \varepsilon^n \leq \varepsilon / (1-\varepsilon)$ for $m\geq N(\varepsilon)$. Using \eqref{eq-log-Wiener2} this means that $g_m -g$ converges to 0 and hence $g_m$ to $g$ in the $A(\mathbb{T})$-norm as $m\to\infty$. By Theorem \ref{thm-integers} this means that $(\nu_m(\{n\})_{n\in \bZ}$ converges in $l^1$ to $(\nu(\{n\}))_{n\in \bZ}$ as $m\to\infty$, which finishes the proof of (ii).

To see that (ii) implies (iii), observe that $\zeta_m = \nu_m$ and $\zeta= \nu$ since the quasi-L\'evy measures are concentrated on $\bZ$. The $l^1$-convergence of the quasi-L\'evy measures then obviously implies $\zeta_m \stackrel{w}{\to} \zeta$ as $m\to\infty$ and $\gamma_m = k_m + \sum_{n\in \bZ} c(n) \nu_m(\{n\}) \to k + \sum_{n\in \bN} c(n) \nu(\{ n\})$ as $m\to\infty$, which is (iii). That (iii) implies (i) follows from Theorem \ref{thm-conv}(a); observe that we do not need  $c$ to be continuous, since we can always modify $c$ between two integers in order to make it continuous without affecting the integrals, since the quasi-L\'evy measures are supported only on $\bZ$.

Finally, tightness and uniform boundedness of $(\zeta_m)_{m\in \bZ}$ follows from (iii).
\end{proof}

We have seen that the quasi-L\'evy measure of a quasi-infinitely divisible distribution on $\bZ$ is finite, the drift an integer and the Gaussian variance 0. There is also a converse:

\begin{thm} \label{thm-integers3}
Let $\mu$ be a quasi-infinitely divisible distribution on $\bR$. Then the following are equivalent:
\begin{enumerate}
\item[(i)] $\mu$ is concentrated on the integers, i.e. $\supp \mu \subset \bZ$.
\item[(ii)] The quasi-L\'evy measure of $\mu$ is concentrated on $\bZ$, the drift is an integer and the Gaussian variance is $0$.
\end{enumerate}
\end{thm}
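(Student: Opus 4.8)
The plan is to obtain the implication (i)~$\Rightarrow$~(ii) as an immediate consequence of Theorem~\ref{thm-integers}, and to prove (ii)~$\Rightarrow$~(i) by a short $2\pi$-periodicity argument for the characteristic function.

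\textbf{Direction (i)~$\Rightarrow$~(ii).} Since $\mu$ is quasi-infinitely divisible, $\widehat{\mu}$ has no zeroes; as $\supp\mu\subset\bZ$, Theorem~\ref{thm-integers} applies, and its conclusion is precisely that the Gaussian variance of $\mu$ is $0$, that the quasi-L\'evy measure $\nu$ of $\mu$ is finite and concentrated on $\bZ$, and that the drift of $\mu$ is an integer. This is exactly statement~(ii), so nothing more is needed in this direction.

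\textbf{Direction (ii)~$\Rightarrow$~(i).} Here I would start from the drift representation of Remark~\ref{rem-drift}. It is applicable: the hypothesis that $\nu$ is concentrated on $\bZ$ forces $|\nu|((-1,1))=0$ (the only integer in $(-1,1)$ is $0$, and $|\nu|(\{0\})=0$), so $\int_{|x|<1}|x|\,|\nu|(\di x)=0<\infty$; moreover the same hypothesis together with $\int_\bR(1\land x^2)\,|\nu|(\di x)<\infty$ shows $|\nu|(\bR)=\sum_{n\neq 0}|\nu(\{n\})|<\infty$, so $\nu$ is a finite signed measure. Combining this with Gaussian variance $0$ and integer drift $k$ gives
$$
\widehat{\mu}(z)=\exp\left(\ri k z+\int_\bR(\re^{\ri z x}-1)\,\nu(\di x)\right)
=\exp\left(\ri k z+\sum_{n\in\bZ\setminus\{0\}}\nu(\{n\})\,(\re^{\ri n z}-1)\right),\quad z\in\bR,
$$
the interchange of sum and integral being justified by absolute convergence. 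Since $k$ and all indices $n$ occurring in the sum are integers, the right-hand side is invariant under $z\mapsto z+2\pi$, so $\widehat{\mu}$ is $2\pi$-periodic.

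Finally, I would convert $2\pi$-periodicity into concentration on the integers. From $\widehat{\mu}(2\pi)=\widehat{\mu}(0)=1$ we get $\int_\bR\re^{2\pi\ri x}\,\mu(\di x)=1$, and taking real parts yields $\int_\bR(1-\cos 2\pi x)\,\mu(\di x)=0$; since the integrand is nonnegative with zero set exactly $\bZ$, it follows that $\mu(\bR\setminus\bZ)=0$, i.e.\ $\supp\mu\subset\bZ$, which is~(i). The argument presents no genuine obstacle; the only point that requires a moment's care is verifying the applicability of the drift representation, which is automatic precisely because $\nu$ puts no mass in a neighbourhood of $0$.
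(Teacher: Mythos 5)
Your proof is correct, and for the direction (ii)~$\Rightarrow$~(i) it takes a genuinely different route from the paper. The paper argues via the canonical factorization: it takes independent $X,Y$ with $\law(X)=\mu$, $\law(Y)$ infinitely divisible with triplet $(0,\nu^-,0)_0$, and $Z$ with $\law(Z)$ infinitely divisible with triplet $(0,\nu^+,\gamma)_0$, so that $X+Y\stackrel{d}{=}Z$ as in \eqref{eq-factor1}; it then invokes \cite[Cor.~24.6]{Sa} to see that $Y$ and $Z$ are concentrated on $\bZ$, and concludes that $X$ must be as well. You instead work directly with the characteristic function: from the hypothesis that $\nu$ lives on $\bZ$ you correctly deduce $|\nu|((-1,1))=0$ and $|\nu|(\bR)<\infty$, so the drift representation of Remark \ref{rem-drift} applies and yields $\widehat{\mu}(z)=\exp\bigl(\ri k z+\sum_{n\neq 0}\nu(\{n\})(\re^{\ri n z}-1)\bigr)$, whence $\widehat{\mu}(2\pi)=1$, and the standard argument $\int_\bR(1-\cos 2\pi x)\,\mu(\di x)=0$ forces $\mu(\bR\setminus\bZ)=0$. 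Both arguments are sound; yours is more self-contained and elementary, avoiding the lattice-support criterion for infinitely divisible laws (it essentially reproves the relevant half of that criterion in this special case), while the paper's factorization argument is shorter on the page because it delegates the lattice statement to Sato's book and is the same device used throughout Sections \ref{S5}--\ref{S7}, keeping the exposition uniform. The direction (i)~$\Rightarrow$~(ii) is handled identically in both proofs, by citing Theorem \ref{thm-integers}.
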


\begin{proof}
That (i) implies (ii) is Theorem \ref{thm-integers}. For the converse, denote the drift of $\mu$ by $\gamma$ and its quasi-L\'evy measure by $\nu$. Let $X,Y,Z$ be random variables such that $\law(X) = \mu$, $\law(Y)$ is infinitely divisible with characteristic triplet $(0,\nu^-,0)_0$, $\law(Z)$ is infinitely divisible with characteristic triplet $(0,\nu^+,\gamma)_0$, and such that $X$ and $Y$ are independent. Then \eqref{eq-factor1} is satisfied. By \cite[Cor.\ 24.6]{Sa}, $Y$ and $Z$ are concentrated on $\bZ$. Hence also $X$ must be concentrated on $\bZ$, i.e. $\supp \mu \subset \bZ$ and we are done.
\end{proof}

Denote by $D= \{ w\in \bC\cl |w| < 1\}$ the open unit disk and by $\overline{D} = \{ w \in \bC \cl |w| \leq 1\}$ the closed unit disk. A special case of quasi-infinitely divisible distributions is formed by the \emph{discrete pseudo-compound Poisson distributions}, in short \emph{DPCP}-distribution, which have applications in insurance mathematics. Following Zhang et al. \cite[Def. 5.1]{ZhangLiuLi}, a DPCP-distribution is a distribution $\mu = \sum_{n=0}^\infty a_n \delta_n$ on the non-negative integers whose probability generating function $\overline{D} \ni w \mapsto \sum_{n=0}^\infty a_n w^n$ has the form
\begin{equation} \label{eq-DPCP}\sum_{n=0}^\infty a_n w^n = \exp \left( \sum_{j=1}^\infty \alpha_j \lambda (w^j -1) \right) ,  \quad \forall\; w\in \overline{D},
\end{equation}
for some $\lambda>0$ and a sequence $(\alpha_j)_{j\in \bN}$ of real numbers such that $\sum_{j=1}^\infty |\alpha_j| < \infty$ and $\sum_{j=1}^\infty \alpha_j = 1$. Setting $w= \re^{\ri z}$, $z\in \bR$, it is clear that a DPCP-distribution is quasi-infinitely divisible with drift 0, Gaussian variance 0 and quasi-L\'evy measure $\lambda \sum_{j=1}^\infty \alpha_j \delta_j$. Zhang et al. \cite{ZhangLiuLi} obtained the following characterisation of DPCP-distributions:

\begin{thm} [Zhang et al.\ \cite{ZhangLiuLi}, Thm.\ 5.2] \label{thm-Zhang}
A distribution $\mu = \sum_{n=0}^\infty a_n \delta_n$ is a DPCP-distribution if and only if the probability generating function has no zeroes on $\overline{D}$, i.e.\ if\/ $\sum_{n=0}^\infty a_n w^n \neq 0$ for all $w\in \overline{D}$.
\end{thm}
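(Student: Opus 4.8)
The plan is to prove the two implications separately. The direct implication is immediate, and the converse will be deduced from Theorem~\ref{thm-integers} together with the elementary remark that a power series with non-negative, absolutely summable coefficients extends holomorphically (and, in our situation, zero-free) across the open unit disc $D$.

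For the \lq\lq only if\rq\rq{} part, suppose $\mu$ is a DPCP-distribution, so that \eqref{eq-DPCP} holds for some $\lambda>0$ and real numbers $(\alpha_j)_{j\geq 1}$ with $\sum_{j\geq 1}|\alpha_j|<\infty$. Since $|w^j-1|\leq 2$ for $w\in\overline{D}$, the series $\sum_{j\geq 1}\alpha_j\lambda(w^j-1)$ converges uniformly on $\overline{D}$ to a finite continuous function, whose exponential is therefore nowhere zero; hence $\sum_{n\geq 0}a_nw^n\neq 0$ on $\overline{D}$.

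For the \lq\lq if\rq\rq{} part, write $P(w)=\sum_{n\geq 0}a_nw^n$ and assume $P(w)\neq 0$ for all $w\in\overline{D}$. Then $P$ is continuous on $\overline{D}$ (as $\sum_n|a_n|<\infty$), holomorphic on $D$, and zero-free there; in particular $a_0=P(0)\in(0,1]$ and $P>0$ on $[0,1]$. First I would fix a continuous function $Q\cl\overline{D}\to\bC$, holomorphic on $D$, with $\exp Q=P$ on $\overline{D}$ and $Q(1)=0$; such a $Q$ exists because $\overline{D}$ is simply connected, and along $[0,1]$ it agrees with the real logarithm of $P$, so that $Q(0)=\log a_0\in\bR$. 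Since $\wh\mu(z)=P(\re^{\ri z})$ has no zeroes, Theorem~\ref{thm-integers} applies: $\mu$ is quasi-infinitely divisible with Gaussian variance $0$, integer drift $k$, and finite quasi-L\'evy measure $\nu=\sum_{n\in\bZ\setminus\{0\}}b_n\delta_n$ with $b_n\in\bR$ and $\sum_n|b_n|<\infty$; here, by \eqref{eq-integers1}, $b_n=k/n+\frac1{2\pi}\int_0^{2\pi}\re^{-\ri nz}g(z)\,\di z$, where $g$ is the distinguished logarithm of $\wh\mu$. The key point is that $z\mapsto Q(\re^{\ri z})$ is a continuous logarithm of $\wh\mu$ vanishing at $0$, hence coincides with $g$; consequently $k=(2\pi\ri)^{-1}g(2\pi)=(2\pi\ri)^{-1}Q(1)=0$, so that $b_n=\frac1{2\pi}\int_0^{2\pi}\re^{-\ri nz}Q(\re^{\ri z})\,\di z$ is the $n$-th Fourier coefficient of $z\mapsto Q(\re^{\ri z})$. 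Since $Q$ is holomorphic on $D$ and continuous on $\overline{D}$, Cauchy's theorem forces $b_n=0$ for every $n\leq-1$; thus $\nu$ is concentrated on $\bN$, and $Q(w)=\sum_{n\geq 0}b_nw^n$ on $D$.

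To conclude, the normalisation $g(0)=0$ gives $b_0+\sum_{n\geq 1}b_n=0$, so with $\lambda:=\sum_{n\geq 1}b_n=-b_0=-\log a_0$ one obtains $Q(w)=\sum_{n\geq 1}b_n(w^n-1)$ on $D$, whence $P(w)=\exp\!\big(\sum_{n\geq 1}b_n(w^n-1)\big)$ on $D$ and then on all of $\overline{D}$ by continuity of both sides. If $\mu\neq\delta_0$ then $a_0<1$, so $\lambda>0$, and putting $\alpha_j:=b_j/\lambda$ we get $\sum_j|\alpha_j|<\infty$ and $\sum_j\alpha_j=1$, which is exactly \eqref{eq-DPCP} (the degenerate case $\mu=\delta_0$ has to be excluded, as no such $\lambda>0$ exists). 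The step I expect to be the main obstacle is precisely the passage from the conclusion of Theorem~\ref{thm-integers}, which only locates the quasi-L\'evy measure on $\bZ$ and the drift in $\bZ$, to the sharper fact that the drift is $0$ and $\nu$ lives on $\bN$: this is where one must exploit that $P$ extends \emph{holomorphically}, not merely continuously, and zero-free across the full disc, the delicate Wiener--L\'evy input yielding $\sum_n|b_n|<\infty$ having already been absorbed into Theorem~\ref{thm-integers}.
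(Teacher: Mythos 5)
Your proof is correct, but note that the paper itself offers no proof of this statement: Theorem \ref{thm-Zhang} is imported verbatim from Zhang, Liu and Li \cite{ZhangLiuLi}, so any argument is necessarily a different route. What you have done is re-derive the cited result inside the paper's own framework: the forward direction is as trivial as you say, and for the converse you feed the zero-free $2\pi$-periodic function $\widehat{\mu}(z)=P(\re^{\ri z})$ into Theorem \ref{thm-integers}, identify the distinguished logarithm $g$ with $z\mapsto Q(\re^{\ri z})$ for a logarithm $Q$ of $P$ continuous on $\overline{D}$ and holomorphic on $D$ (this identification is legitimate: uniqueness of the distinguished logarithm gives $g(z)=Q(\re^{\ri z})$, hence $k=(2\pi\ri)^{-1}Q(1)=0$), kill the negative Fourier coefficients by Cauchy's theorem on circles of radius $r\uparrow 1$, and read off \eqref{eq-DPCP} from $\sum_n|b_n|<\infty$ and $g(0)=0$. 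There is no circularity, since Theorem \ref{thm-integers} is proved via Wiener--L\'evy and Theorem \ref{thm-finite} without any appeal to Theorem \ref{thm-Zhang}; and your boundary-to-disc identification (two functions holomorphic on $D$, continuous on $\overline{D}$, agreeing on $\partial D$ agree on $\overline{D}$, cf.\ \cite[Thm.\ 13.5.3]{Conway2}) is exactly the device the paper uses for the implication (i)$\Rightarrow$(iii) in Theorem \ref{thm-DPCP-quasi}, only run in the opposite direction. What your argument buys is independence from \cite{ZhangLiuLi}: the cited theorem becomes a corollary of Theorem \ref{thm-integers} plus elementary disc-algebra facts, at the cost of invoking the heavier Wiener--L\'evy machinery where Zhang et al.\ argue directly with generating functions.

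One caveat, which you correctly flag rather than hide: with the definition \eqref{eq-DPCP} as transcribed ($\lambda>0$, $\sum_j\alpha_j=1$), the degenerate case $\mu=\delta_0$ has zero-free probability generating function $\equiv 1$ but admits no DPCP representation (the exponent would have to vanish identically on $\overline{D}$, forcing $\alpha_j\lambda=0$ for all $j$), so the stated equivalence fails there. This is a blemish of the cited statement (or of its definition, which one may read as implicitly excluding the degenerate case), not a gap in your proof, which settles every non-degenerate $\mu$.
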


It follows from Theorem \ref{thm-Zhang} that a DPCP-distribution must necessarily have an atom at 0.
The following theorem establishes the precise connection to quasi-infinitely divisible distributions.

\begin{thm} \label{thm-DPCP-quasi}
Let $\mu=\sum_{n\in \bZ} a_n \delta_n$ be a distribution on $\bZ$ and let $k\in \bZ$. Then the following are equivalent:
\begin{enumerate}
\item[(i)] $\mu$ is quasi-infinitely divisible with drift $k$, quasi-L\'evy measure $\nu$ and $\supp \nu \subset \bN$.
\item[(ii)] $\mu$ is quasi-infinitely divisible with drift $k$, quasi-L\'evy measure $\nu$ and $\supp \nu^+ \subset \bN$.
\item[(iii)] $a_k\neq 0$, $a_n =0$ for $n< k$ (i.e. $\inf (\supp \mu) = k$) and the function $\overline{D}\to \bC$
given by $w\mapsto \sum_{n=0}^\infty a_{n+k} w^n$ has no zeroes on $\overline{D}$.
\item[(iv)] $\mu \ast \delta_{-k}$ is a DPCP-distribution, in particular is concentrated on $\bN_0$.
\item[(v)] $a_k \neq 0$, $a_n=0$ for $n<k$, and there exists a sequence $(q_n)_{n\in \bN}$ of real numbers with $\sum_{n=1}^\infty |q_n| < \infty$ and such that
    \begin{equation} \label{eq-Katti}
    n a_{n+k} = \sum_{j=1}^n j q_j a_{n+k-j}, \quad \forall\; n\in\bN.
    \end{equation}
\end{enumerate}
Further, the sequence $(q_n)_{n \in \bN}$ appearing in (v) is related to the quasi-L\'evy measure $\nu$ of $\mu$
by $q_n = \nu (\{n\})$ for all $n\in \bN$.
\end{thm}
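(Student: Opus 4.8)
The plan is to reduce to the case $k=0$ by translation and then run a short cycle of implications. Replacing $\mu$ by $\mu\ast\delta_{-k}$, whose coefficients are $a_n':=a_{n+k}$, Remark \ref{rem-conv}(b) shows that the quasi-L\'evy measure is unchanged and the drift becomes $0$, and one checks directly that each of (i)--(v) turns into the corresponding statement for $\mu\ast\delta_{-k}$ with $k=0$. So from now on I will assume $k=0$ and prove (i)$\Leftrightarrow$(ii) together with the cycle (i)$\Rightarrow$(iii)$\Leftrightarrow$(iv)$\Rightarrow$(v)$\Rightarrow$(i); the identity $q_n=\nu(\{n\})$ will drop out of the final step. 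Throughout I may invoke Theorem \ref{thm-integers}: since $\mu$ (or a factor of it) is quasi-infinitely divisible and concentrated on $\bZ$, its Gaussian variance is $0$, its quasi-L\'evy measure $\nu$ is finite and concentrated on $\bZ$, and its drift is an integer.

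For (i)$\Leftrightarrow$(ii) the forward implication is trivial, since $\supp\nu\subset\bN$ forces $\supp\nu^+\subset\bN$. For the converse I would apply Proposition \ref{prop-support1}. Assuming (ii), the Gaussian variance is $0$, $\supp\nu^+\subset\bN\subset[0,\infty)$, and $\int_{(0,1)}x\,\nu^+(\di x)=0$ because $\nu^+$ charges only integers $\geq1$; thus condition (ii) of that proposition holds, so condition (i) there holds: $\mu$ is bounded from below, $\supp\nu^-\subset[0,\infty)$, and the drift $\gamma_0$ equals $\inf(\supp\mu)$. Since $\nu^-$ is concentrated on $\bZ\setminus\{0\}$, this upgrades to $\supp\nu^-\subset\bN$, hence $\supp\nu=\supp|\nu|\subset\bN$, which is (i); and because $\gamma_0=k=0$ we also learn that $a_0\neq0$ and $a_n=0$ for $n<0$, i.e.\ $\mu$ is concentrated on $\bN_0$. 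I will use these last facts freely below.

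For (i)$\Rightarrow$(iii) and (iii)$\Leftrightarrow$(iv): under (i), $\mu$ lives on $\bN_0$ with $a_0\neq0$, so its generating function $P(w)=\sum_{n\geq0}a_nw^n$ is continuous on $\overline{D}$ and holomorphic on $D$. Writing $q_n:=\nu(\{n\})$ (so $\sum_n|q_n|<\infty$, $\nu$ being finite), the function $Q(w):=\exp\bigl(\sum_{n\geq1}q_n(w^n-1)\bigr)$ has the same regularity and is nowhere zero, and on $\partial D$ it agrees with $P$ since $Q(\re^{\ri z})=\widehat{\mu}(z)=P(\re^{\ri z})$. As $P-Q$ is holomorphic on $D$, continuous on $\overline{D}$ and vanishes on $\partial D$, the maximum principle forces $P\equiv Q$ on $\overline{D}$; hence $P$ has no zeroes on $\overline{D}$, which is (iii). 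The equivalence (iii)$\Leftrightarrow$(iv) is then exactly Theorem \ref{thm-Zhang} applied to $\mu\ast\delta_{-k}$, whose generating function is $w\mapsto\sum_{n\geq0}a_{n+k}w^n$.

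Finally I would close the cycle with (iv)$\Rightarrow$(v)$\Rightarrow$(i). If $\mu=\mu\ast\delta_{-k}$ is a DPCP-distribution with parameters $\lambda>0$ and $(\alpha_j)$, put $q_j:=\lambda\alpha_j$; then $\sum_j|q_j|<\infty$ and $P(w)=\exp\bigl(\sum_j q_j(w^j-1)\bigr)$, so near $w=0$ one has $wP'(w)=P(w)\sum_j jq_jw^j$, and comparing the coefficient of $w^n$ gives $na_n=\sum_{j=1}^n jq_ja_{n-j}$, i.e.\ \eqref{eq-Katti} with $k=0$, while $a_0=P(0)\neq0$ and $a_n=0$ for $n<0$ because a DPCP-distribution sits on $\bN_0$; this is (v). For (v)$\Rightarrow$(i), let $R(w):=a_0\exp\bigl(\sum_{n\geq1}q_nw^n\bigr)$, a nowhere-zero function holomorphic on $D$ and continuous on $\overline{D}$; its Taylor coefficients $\tilde a_n$ satisfy $\tilde a_0=a_0$ and the same recursion $n\tilde a_n=\sum_{j=1}^n jq_j\tilde a_{n-j}$, which determines them uniquely, so $\tilde a_n=a_n$ for all $n$ (using $a_n=0$ for $n<0$) and $R=P$ on $\overline{D}$, both series having absolutely summable coefficients since $\sum_n a_n=1$. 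Evaluating at $w=1$ gives $a_0=\exp(-\sum_n q_n)$, hence $\widehat{\mu}(z)=P(\re^{\ri z})=\exp\bigl(\sum_{n\geq1}q_n(\re^{\ri nz}-1)\bigr)$, showing that $\mu$ is quasi-infinitely divisible with Gaussian variance $0$, drift $0$, and quasi-L\'evy measure $\sum_{n\geq1}q_n\delta_n$, which is supported on $\bN$; uniqueness of the quasi-L\'evy measure then yields (i) and $q_n=\nu(\{n\})$. The routine parts are the two boundary-uniqueness arguments and the coefficient comparisons; I expect the only real care needed to be in checking that Proposition \ref{prop-support1} applies verbatim (which it does, because $\nu^+$ is supported on $\{1,2,\dots\}$ and the Gaussian variance of a lattice distribution vanishes) and in keeping the reduction to $k=0$ transparent across all five conditions.
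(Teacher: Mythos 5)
Your proposal is correct, and much of it coincides with the paper's argument: the equivalence (i)$\Leftrightarrow$(ii) via Proposition \ref{prop-support1} together with Theorem \ref{thm-integers}, the implication (i)$\Rightarrow$(iii) by comparing the generating function with $\exp\bigl(\sum_{n\geq 1}(w^n-1)\nu(\{n\})\bigr)$ on the boundary of the disk (the paper cites a boundary-uniqueness theorem of Conway where you use the maximum principle applied to the difference --- an equally valid and slightly more elementary justification), and (iii)$\Leftrightarrow$(iv) via Theorem \ref{thm-Zhang}. Where you genuinely diverge is in the treatment of (v) and in the overall logical architecture. The paper proves (iv)$\Rightarrow$(i) by simply reading off the characteristic triplet from the DPCP representation \eqref{eq-DPCP} with $w=\re^{\ri z}$, and then handles (i)$\Leftrightarrow$(v), together with $q_n=\nu(\{n\})$, by reducing to $k=0$ and appealing to Theorem \ref{thm-support3} in analogy with the proof of Corollary 51.2 in \cite{Sa}. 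You instead close a single cycle (iv)$\Rightarrow$(v)$\Rightarrow$(i): the Katti recursion \eqref{eq-Katti} is extracted from the identity $wP'(w)=P(w)\sum_j jq_jw^j$ by comparing Taylor coefficients, and conversely, given (v), the recursion with initial value $a_0$ uniquely determines the coefficients, so the probability generating function must equal $a_0\exp\bigl(\sum_n q_nw^n\bigr)$; evaluating at $w=1$ and $w=\re^{\ri z}$ then exhibits the characteristic function in the required form, and uniqueness of the characteristic triplet yields both (i) and $q_n=\nu(\{n\})$. This buys a self-contained, purely power-series proof of the Panjer/Katti part that avoids Theorem \ref{thm-support3} and the external reference to Sato's Corollary 51.2, at the cost of redoing by hand what that machinery packages; the paper's route is shorter on the page but leans on the analogy with the infinitely divisible case. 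Your reduction to $k=0$ via Remark \ref{rem-conv}(b) and your checks of the hypotheses of Proposition \ref{prop-support1} (Gaussian variance $0$ from Theorem \ref{thm-integers}, $\int_{(0,1)}x\,\nu^{\pm}(\di x)=0$ for measures on $\bZ$) are all in order, so I see no gap.
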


\begin{proof}
The equivalence of (iii) and (iv) is Theorem \ref{thm-Zhang}, and that (iv) implies (i) has been observed  after the definition of DPCP-distributions.
That (i) implies (ii) is trivial, and that (ii) implies (i) follows from Proposition \ref{prop-support1} and Theorem \ref{thm-integers}. Let us prove that (i) implies (iii). Again, by Proposition \ref{prop-support1} (and since the Gaussian variance is 0), $k = \inf (\supp \mu)$. Define the functions $f,g: \overline{D} \to \bC$ by
$$f(w) = \sum_{n=0}^\infty a_{n+k} w^n \quad \mbox{and} \quad g(w) = \exp \left( \sum_{n=1}^\infty (w^n-1) \, \nu(\{n\}) \right) .$$ Then both $f$ and $g$ are holomorphic on $D$ and continuous on $\overline{D}$, in particular bounded on $\overline{D}$. Since
$$g(\re^{\ri z}) = (\mu \ast \delta_{-k}) \widehat{ { } } \; (z) = ( \sum_{n=0}^\infty a_{n+k} \delta_n ) \widehat{ { } }\; (z) = f(\re^{\ri z}), \quad z\in \bR,$$
$f$ and $g$ agree on the boundary $\partial D = \{ w \in \bC: |w|=1\}$ and hence $f=g$ on $\overline{D}$, see e.g. \cite[Thm. 13.5.3]{Conway2}. Since $g$ has no zeroes on $\overline{D}$, the same is true for $f$.
We have proved the equivalence of conditions (i) -- (iv). For proving that (i) - (iv) are equivalent to (v), by considering $\mu \ast \delta_{-k}$ we can and shall assume without loss of generality that $k=0$ so that $a_0 \neq 0$ and $a_n = 0$ for $n < 0$. The equivalence of (i) and (v) and the relation $q_n = \nu(\{n\})$ then follows in complete analogy to the proof of Corollary 51.2 in \cite{Sa}, with the help of Theorem \ref{thm-support3}.
\end{proof}

Condition (v) in Theorem \ref{thm-DPCP-quasi} is a version of Katti's criterion for quasi-infinitely divisible distributions, and appears also under the name of Panjer-recursions. The equivalence of (iv) and (v) above (without explicitly stated summability conditions on $(q_n)$) has already been observed  by H\"urlimann \cite[Lem.\ 1]{Huerlimann1989}. Observe that \eqref{eq-Katti} gives an easy method of determining the quasi-L\'evy measure of a distribution that satisfies the equivalent conditions of Theorem \ref{thm-DPCP-quasi}, by simply solving \eqref{eq-Katti} recursively for $q_n$.

%

In Example \ref{ex-non-moments} we have seen that existence of certain moments cannot always be characterised by the corresponding property of the quasi-L\'evy measure. Now we show that for quasi-infinitely divisible distributions on the integers and for submultiplicative functions satisfying an additional condition, this is possible. We need
the following Wiener-L\'evy type theorem for the Beurling-algebra of $2\pi$-periodic functions whose Fourier-coefficients are summable with respect to a given weight satisfying the GRS-condition. It can be (almost) found in this form in Bhatt and Dedania \cite{Bhatt}:

\begin{thm} [Bhatt and Dedania \cite{Bhatt}] \label{thm-Bhatt}
Let $h\cl \bZ \to [0,\infty)$ be a submultiplicative function, i.e. such that there exists $B>0$ with
\begin{equation} \label{eq-subm2}
h(n+m) \leq B h(n) h(m), \quad \forall\; n,m\in \bZ.
\end{equation}
Assume furthermore that
$h$ satisfies the Gelfand-Raikov-Shilov (GRS)-condition
\begin{equation} \label{eq-GRS}
\lim_{n\to\pm \infty} \frac{\log h(n)}{n} = 0.
\end{equation}
Let $f$ be a continuous $2\pi$-periodic complex valued function such that its Fourier coefficients $b_n(f)$ satisfy $\sum_{n\in \bZ} h(n) |b_n(f)| < \infty$, and let $F:U\to \bC$ be  a holomorphic function defined in an open neighbourhood $U$ of the range of $f$. Then the Fourier coefficients $b_n(F\circ f)$ of  $F\circ f$ satisfy $\sum_{n\in \bZ} h(n) |b_n (F \circ f)|$, too.
\end{thm}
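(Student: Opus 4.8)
The plan is to realise the hypothesis as membership in the Beurling algebra attached to the weight $h$ and to derive the conclusion from the holomorphic functional calculus in a commutative Banach algebra, in the spirit of the classical proof of L\'evy's theorem for the Wiener algebra $A(\mathbb{T})$. Replacing $h$ by $Bh$ changes neither the finiteness of $\sum_n h(n)|b_n(\cdot)|$ nor the validity of \eqref{eq-GRS}, and makes $h$ submultiplicative with constant $1$, so I may assume $h(n+m)\le h(n)h(m)$ for all $n,m\in\bZ$; then (discarding the trivial case $h\equiv0$) $h(0)\ge1$, and $h(n_0)<1$ for some $n_0>0$ would make $h(kn_0)\le h(n_0)^k$ decay geometrically, contradicting \eqref{eq-GRS}, and symmetrically for $n_0<0$, so in fact $h(n)\ge1$ for all $n\in\bZ$. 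Let $A_h(\mathbb{T})$ be the space of $2\pi$-periodic continuous functions $f$ with $\|f\|_h:=\sum_{n\in\bZ}h(n)|b_n(f)|<\infty$. I would first check that $(A_h(\mathbb{T}),\|\cdot\|_h)$ is a commutative unital Banach algebra under pointwise multiplication: via the Fourier coefficients it is isometrically $\ell^1(\bZ,h)$, hence complete and contained in $A(\mathbb{T})$; the inequality $\|fg\|_h\le\|f\|_h\|g\|_h$ follows from $b_n(fg)=\sum_k b_k(f)b_{n-k}(g)$ together with $h(n)\le h(k)h(n-k)$; and the unit is $\one_\bR$ (rescale the norm so $\|\one_\bR\|_h=1$). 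The trigonometric polynomials, i.e.\ the subalgebra generated by $u(z):=\re^{\ri z}$ and $u(z)^{-1}=\re^{-\ri z}$, are dense in $A_h(\mathbb{T})$, so a character of $A_h(\mathbb{T})$ is determined by its value at $u$.

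The decisive step, and the only place \eqref{eq-GRS} is used, is the identification of the maximal ideal space of $A_h(\mathbb{T})$ with $\mathbb{T}$. Since $b_k(u^{\pm n})=\delta_{k,\pm n}$, we have $\|u^{\pm n}\|_h=h(\pm n)$ for $n\ge0$, so the spectral radii of $u$ and of the invertible element $u^{-1}$ equal $\lim_{n\to\infty}h(n)^{1/n}$ and $\lim_{n\to\infty}h(-n)^{1/n}$, both of which are $1$ by the GRS-condition. Hence for any character $\chi$ of $A_h(\mathbb{T})$ one has $|\chi(u)|\le1$ and $|\chi(u)|^{-1}=|\chi(u^{-1})|\le1$, so $\chi(u)=\re^{\ri t_0}$ for some $t_0\in\bR$, and then $\chi$ agrees with evaluation at $t_0$ on trigonometric polynomials, hence on all of $A_h(\mathbb{T})$ by density; conversely each evaluation at a point of $\mathbb{T}$ is a character. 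Consequently the Gelfand transform of $A_h(\mathbb{T})$ is the identity map $\widehat f=f$ (seen on $\mathbb{T}$); in particular $A_h(\mathbb{T})$ is semisimple (a $2\pi$-periodic function whose Fourier coefficients all vanish is $0$), and for $f\in A_h(\mathbb{T})$ the spectrum $\sigma_{A_h(\mathbb{T})}(f)$ equals the range $f(\mathbb{T})$.

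Finally I would invoke the holomorphic functional calculus. The hypothesis says $f\in A_h(\mathbb{T})$, and $F$ is holomorphic on the open set $U\supseteq f(\mathbb{T})=\sigma_{A_h(\mathbb{T})}(f)$; the Riesz--Dunford calculus therefore yields an element $F(f)\in A_h(\mathbb{T})$, and by naturality $\chi(F(f))=F(\chi(f))$ for every character $\chi$, i.e.\ $\widehat{F(f)}=F\circ\widehat f=F\circ f$ as functions on $\mathbb{T}$. By semisimplicity $F(f)=F\circ f$ in $A_h(\mathbb{T})$, whence $\sum_{n\in\bZ}h(n)|b_n(F\circ f)|=\|F\circ f\|_h=\|F(f)\|_h<\infty$; undoing the normalisation $h\rightsquigarrow Bh$ gives the assertion. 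The main obstacle is exactly the spectrum computation of the second step: without \eqref{eq-GRS} the spectral radius of $u$ or of $u^{-1}$ can exceed $1$, the maximal ideal space becomes an annulus $\{r_1\le|w|\le r_2\}$ strictly larger than $\mathbb{T}$, and one would then need $F$ holomorphic on that annulus rather than merely near the range of $f$ — so the GRS-condition is genuinely necessary. Everything else is routine Banach-algebra bookkeeping; alternatively the last two steps could be replaced by the classical localisation argument using that $A_h(\mathbb{T})$ is a regular algebra with the Ditkin property, but the Gelfand-theoretic route is shorter.
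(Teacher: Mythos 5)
Your argument is correct, and it is in substance the same route the paper takes, the only difference being that the paper outsources the core step while you write it out. The paper's own proof consists of exactly your normalisation (replace $h$ by $Bh$ so that $B=1$, then use \eqref{eq-GRS} to get $h\geq 1$) followed by a citation of Bhatt--Dedania, with an alternative reasoning via Gr\"ochenig: inverse-closedness of the Beurling algebra $A_h(\mathbb{T})$ in $C(\mathbb{T})$ (the weighted Wiener lemma, which is where GRS enters) plus coincidence of the Riesz calculi. Your self-contained version --- computing the maximal ideal space of $A_h(\mathbb{T})$ by noting $\|u^{\pm n}\|_h=h(\pm n)$, so that GRS forces $r(u)=r(u^{-1})=1$ and every character is a point evaluation, and then invoking the Riesz--Dunford calculus together with semisimplicity --- is precisely the content of those citations, so what you gain is a proof readable without Gel'fand-theory references, at the cost of length; your closing remark that without GRS the Gelfand spectrum becomes an annulus and $F$ would need to be holomorphic there is also the right explanation of why the hypothesis is needed. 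Two small points to tidy: after normalisation one only gets $h(0)\geq 1$, not $h(0)=1$, so rather than ``rescaling the norm'' simply note that Gel'fand theory and the spectral radius formula $r(a)=\lim_n\|a^n\|^{1/n}$ do not require the unit to have norm one (characters still satisfy $|\chi(a)|\leq r(a)\leq\|a\|$); and the existence of the limits $\lim_n h(\pm n)^{1/n}$ is best justified by Fekete's lemma applied to the subadditive sequences $\log h(\pm n)$, after which GRS identifies the limits as $1$ --- this matches the paper's remark that $\inf\{h(n)^{1/n}\colon n\in\bN\}=\sup\{h(n)^{1/n}\colon -n\in\bN\}=1$.
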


\begin{proof}
Multiplying \eqref{eq-subm2} by $B$ we have $Bh(m+n) \leq (Bh(n)) (Bh(m))$. By replacing $h$ by $Bh$, we may hence assume that $B=1$. Then, by submultiplicativity $\log h(nm)\leq n \log [h(m)]$ for $n\in \bN$ and $m\in \bZ$, so that the GRS-condition implies $\log h(m) \geq 0$ for each $m\in \bZ$, i.e. $h(m) \geq 1$. With these additional hypothesis, the theorem is then stated in Bhatt and Dedania \cite{Bhatt}, observing that the function $\chi$ there can be chosen to be the original weight-function $h$ (in their notation, $\omega$) as pointed out in their proof, since $\inf \{ [h(n)]^{1/n} : n\in \bN\} = \sup \{ [h(n)]^{1/n} : -n \in \bN\} = 1$ by the GRS-condition. Since the proof in \cite{Bhatt} is a bit short for people that are not familiar with the Gel`fand theory, an alternative reasoning can be based on Gr\"ochenig \cite{Groechenig2}: by Corollary 5.27 in \cite{Groechenig2}, the Beurling algebra under consideration is inverse closed in the algebra of continuous  $2\pi$-periodic functions. Since it is further continuously embedded into that algebra as is easy to see, the Riesz-calculi for holomorphic functions in both algebras coincide \cite[Cor. 5.15]{Groechenig2} and hence exactly the same proof as in \cite[Thm. 5.16]{Groechenig2} gives the claim.
\end{proof}

Theorem \ref{thm-Bhatt} can be applied to the distinguished logarithm and we obtain the following analogue to the Lemma on page 491 of Calder\'on et al. \cite{Calderon}:

\begin{cor} \label{cor-Bhatt}
Let $h\cl \bZ \to [0,\infty)$ be a submultiplicative function satisfying the GRS-condition and $f\cl \bR \to \bC$ a continuous
$2\pi$-periodic complex-valued function such that $f(z) \neq 0$ for all $z\in \bR$ and such that the Fourier coefficients $b_n(f)$ satisfy $\sum_{n\in \bZ} h(n) |b_n(f)| < \infty$. Assume furthermore that the distinguished logarithm $g$ of $f$ satisfies $g(2\pi) = g(0)$. Then the Fourier coefficients $b_n(g)$ of $g$ satisfy $\sum_{n\in \bZ} h(n) |b_n(g)| < \infty$, too.
\end{cor}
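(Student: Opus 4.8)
The plan is to run the classical Wiener--L\'evy argument for the logarithm, with Theorem~\ref{thm-Bhatt} as the main engine and the hypothesis $g(2\pi)=g(0)$ entering through a winding-number count. Write $\mathcal A_h$ for the Beurling algebra of $2\pi$-periodic continuous functions $\varphi\cl\bR\to\bC$ with $\|\varphi\|_h:=\sum_{n\in\bZ}h(n)\,|b_n(\varphi)|<\infty$. As in the proof of Theorem~\ref{thm-Bhatt} I would first replace $h$ by $Bh$ so that $B=1$ and $h(n)\ge 1$ for all $n$; then $\mathcal A_h$ is a commutative Banach algebra, $\|\varphi\|_\infty\le\|\varphi\|_h$, and the partial Fourier sums of any $\varphi\in\mathcal A_h$ converge to $\varphi$ in $\|\cdot\|_h$, so trigonometric polynomials are dense in $\mathcal A_h$. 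Next I would note that $g$ is $2\pi$-periodic: $z\mapsto g(z+2\pi)-g(z)$ is continuous and $2\pi\ri\bZ$-valued, hence equal to the constant $g(2\pi)-g(0)=0$. Thus $g$ has Fourier coefficients and the statement to prove is simply $g\in\mathcal A_h$ (the branch of $\log$ is irrelevant, since changing it only shifts $b_0(g)$).

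The key move is to factor off a trigonometric polynomial. Put $\delta:=\min_{z\in\bR}|f(z)|>0$ and choose a trigonometric polynomial $p$ with $\|f-p\|_h<\delta/2$. Then $p$ is nonvanishing, and the straight-line homotopy $H(t,z)=tf(z)+(1-t)p(z)$, $t\in[0,1]$, stays in $\bC\setminus\{0\}$, so $p$ and $f$ have the same index about the origin, which by $g(2\pi)=g(0)$ equals $0$; consequently every continuous logarithm $g_p$ of $p$ on $\bR$ is $2\pi$-periodic. Set $q:=f/p$. Applying Theorem~\ref{thm-Bhatt} to the trigonometric polynomial $p$ with $F(w)=1/w$ (holomorphic on $\bC\setminus\{0\}$, an open neighbourhood of the compact set $p(\bR)$) gives $1/p\in\mathcal A_h$, hence $q=f\cdot(1/p)\in\mathcal A_h$; moreover $q$ is $2\pi$-periodic, nonvanishing, and $|q-1|=|f-p|/|p|<1$ everywhere. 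Let $g_q:=\mathrm{Log}\,q$ be the principal branch, which is continuous and $2\pi$-periodic. Then $g_p+g_q$ is a continuous logarithm of $f=p\cdot q$, so $g-(g_p+g_q)$ is a constant in $2\pi\ri\bZ$; since a constant affects only $b_0$ and $h(0)<\infty$, it suffices to show $g_p\in\mathcal A_h$ and $g_q\in\mathcal A_h$.

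For $g_q$ I would invoke Theorem~\ref{thm-Bhatt} once more: $q\in\mathcal A_h$ and its range is a compact subset of $\{w\in\bC\cl|w-1|<1\}$, on which $\mathrm{Log}$ is holomorphic, so $g_q=\mathrm{Log}\circ q\in\mathcal A_h$. For $g_p$ I would combine analyticity with the GRS-condition: since $p$ is a nonvanishing $2\pi$-periodic trigonometric polynomial, by periodicity and compactness it extends to a holomorphic nonvanishing function on a strip $\{z\cl|\Im z|<a_0\}$, and so does its continuous logarithm $g_p$; hence $g_p$ is real-analytic on $\bR$ and $|b_n(g_p)|\le C_a\re^{-a|n|}$ for every $a<a_0$. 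The GRS-condition gives $h(n)\le\re^{\ep|n|}$ for large $|n|$, any fixed $\ep>0$, so choosing $0<\ep<a<a_0$ yields $\sum_n h(n)\,|b_n(g_p)|<\infty$. Adding this to the bound for $g_q$ gives $g\in\mathcal A_h$, which is the claim. (Alternatively, $g_p\in\mathcal A_h$ can be obtained by factoring the Laurent polynomial $P$ with $p(z)=P(\re^{\ri z})$ and checking that index $0$ makes the linear-in-$z$ parts of $\log P(\re^{\ri z})$ cancel, leaving terms $\mathrm{Log}(1-\beta\re^{\pm\ri z})$ with $|\beta|<1$, whose Fourier coefficients decay geometrically and are thus summable against $h$ by GRS.)

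The only genuinely delicate point is that $\log$ is not holomorphic on a neighbourhood of the range of $f$ once that range encircles $0$, so Theorem~\ref{thm-Bhatt} cannot be applied to $f$ directly; the hypothesis $g(2\pi)=g(0)$ is precisely what forces index $0$, and splitting $f=p\cdot q$ through a trigonometric polynomial is the device that reduces the problem to a part covered by Theorem~\ref{thm-Bhatt} (namely $q$, whose range sits in a disk avoiding $0$) and a part covered by the GRS-condition (namely the analytic factor $g_p$). The remaining work---the index equality for homotopic nonvanishing loops, and checking that all the logarithms in play genuinely differ by constants---is routine once one has verified $2\pi$-periodicity of each of $g$, $g_p$, $g_q$.
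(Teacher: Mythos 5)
Your argument is correct, and it takes a genuinely different route from the paper's. The paper follows Calder\'on--Spitzer--Widom: using the cited lemma it picks a \emph{real} trigonometric polynomial $p$ approximating the argument of $f$ so that $\varphi_1=\re^{-\ri p(\cdot)}f$ takes values in the open right half-plane, applies Theorem~\ref{thm-Bhatt} twice (to $\exp(-\ri p(\cdot))$ and to the principal logarithm on that half-plane), and concludes from $g=\psi_1+\ri p+2\pi\ri l$ that $g$ lies in the Beurling algebra; the index-$0$ hypothesis is hidden in the existence of the periodic continuous argument being approximated. You instead approximate $f$ itself in the Beurling norm by a trigonometric polynomial $p$ (only density of trigonometric polynomials is needed, which is elementary), split $f=p\cdot q$ with $q=f/p$ uniformly close to $1$, apply Theorem~\ref{thm-Bhatt} to $1/p$ and to $\mathrm{Log}$ near $1$, and treat $\log p$ by hand: the homotopy argument transfers index $0$ from $f$ to $p$, the nonvanishing trigonometric polynomial extends holomorphically and zero-free to a strip, so $b_n(\log p)$ decays exponentially, and the GRS-condition (which gives $h(n)\le\re^{\ep|n|}$ for large $|n|$) makes these coefficients $h$-summable. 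What each approach buys: the paper's proof stays entirely within Theorem~\ref{thm-Bhatt} and outsources the construction of the phase-approximating polynomial to Calder\'on et al.; yours avoids that construction and makes the role of the index-$0$ hypothesis and of the GRS-condition completely explicit (the latter is used twice, once inside Theorem~\ref{thm-Bhatt} and once directly against the exponential decay of $b_n(\log p)$), at the cost of the extra strip-analyticity estimate, or equivalently the Laurent factorization sketched in your parenthetical remark, for the polynomial factor. The steps you label as routine (periodicity of $g$, homotopy invariance of the index, periodicity of the holomorphic extension of $\log p$, and the passage from one continuous logarithm to the distinguished one by an additive constant) are indeed straightforward to verify, so no gap remains.
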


\begin{proof}
As in the proof of Theorem \ref{thm-Bhatt}, we can and do assume that $B$ in \eqref{eq-subm2} is equal to 1. Then the space $A_h=A_h(\T)$ of all $2\pi$-periodic complex-valued continuous functions $\varphi$ on $\bR$ with $\sum_{n\in \bZ} h(n) |b_n(\varphi)| < \infty$ is a Banach algebra under the usual addition and multiplication of functions, and with norm given by $\|\varphi\|_h = \sum_{n\in \bZ} h(n) |b_n(\varphi)|$, cf. \cite[Lem. 5.22]{Groechenig2}. In particular, $f \in A_h$ by assumption.

From the proof of the Lemma in \cite[p. 491]{Calderon} it
follows that there is a trigonometric polynomial $p(z)$, say $p(z) = \sum_{n=-m}^m q_n \re^{\ri n z}$, such that the range of $z \mapsto \varphi_1(z) := \exp (- \ri p(z)) f(z)$ lies in the half-plane $\{ w\in \bC\cl \Re (w) > 0\}$. Since obviously
$p \in A_h$, Theorem \ref{thm-Bhatt} gives $\re^{- \ri p(\cdot)} \in A_h$, hence by the Banach-algebra property also $\varphi_1 \in A_h$. Denote by $\log$ the principal branch of the logarithm and define $\psi_1(z) = \log \varphi_1(z)$ for $z\in \bR$.
Then as in \cite[p. 491]{Calderon}, but using Theorem \ref{thm-Bhatt} instead of the Wiener-L\'evy theorem applied to the principal branch $\log$ of the logarithm, it follows that $\psi_1\in A_h$. Since $\psi_1$ and $p$ are continuous $2\pi$-periodic functions with
$$\exp (\psi_1(z) + \ri p(z)) = \varphi_1(z) \exp ( \ri p(z)) = f(z) = \exp (g(z)), \quad \forall\; z \in \bR,$$
the uniqueness of the distinguished logarithm shows that there is $l\in \bZ$ such that $g(z) = \psi_1(z) + \ri p(z) + 2\pi \ri l$. Since $\psi_1$, $\ri p(\cdot)$ and constant functions are in $A_h$, it follows that also $g\in A_h$, which is the claim.
\end{proof}

With Corollary \ref{cor-Bhatt} we can now characterise finiteness of $h$-moments of quasi-infinitely divisible distributions on the integers in terms of the quasi-L\'evy measure, provided $h$ satisfies the GRS-condition:

\begin{thm} \label{thm-moments-integers}
Let $\mu$ be a quasi-infinitely divisible distribution on $\bZ$ with quasi-L\'evy measure $\nu$, and let $h\cl \bZ \to [0,\infty)$ be a submultiplicative weight-function that satisfies the GRS-condition, i.e. $h$ satisfies \eqref{eq-subm2} and \eqref{eq-GRS}. Then the following are equivalent:
\begin{enumerate}
\item[(i)] $\mu$ has finite $h$-moment, i.e. $\int_\bR h(x) \, \mu(\di x) < \infty$.
\item[(ii)] $\nu^+$ has finite $h$-moment, i.e. $\int_\bR h(x) \, \nu^+(\di x) < \infty$.
\item[(iii)] $|\nu|$ has finite $h$-moment, i.e. $\int_\bR h(x) \, |\nu|(\di x) < \infty$.
\end{enumerate}
\end{thm}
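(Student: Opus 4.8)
The plan is to prove the cycle (i)$\Rightarrow$(iii)$\Rightarrow$(ii)$\Rightarrow$(i), which establishes all three equivalences. We may assume $h\not\equiv 0$, so that $h(n)>0$ for every $n\in\bZ$ (a standard consequence of submultiplicativity, as in the proof of Lemma~\ref{lem-subm1}). Throughout we use Theorem~\ref{thm-integers}: since $\mu$ is quasi-infinitely divisible and concentrated on $\bZ$, its Gaussian variance is $0$, its drift $k$ is an integer, its quasi-L\'evy measure $\nu$ is finite and supported in $\bZ\setminus\{0\}$, and if $g$ denotes the distinguished logarithm of $\widehat\mu$, then $\widetilde g(z):=g(z)-\ri k z$ is continuous and $2\pi$-periodic with Fourier coefficients $b_n(\widetilde g)=\nu(\{n\})$ for $n\neq 0$. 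In particular $\nu$ being finite means that in (ii) and (iii) the restriction of the integral to $\{|x|>1\}$ is immaterial. The implication (iii)$\Rightarrow$(ii) is trivial, since $\nu^+\leq|\nu|$.

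For (ii)$\Rightarrow$(i), I would first extend $h$ to a submultiplicative function $\bar h\colon\bR\to[0,\infty)$; this is routine (e.g.\ $\bar h(x):=h(\lfloor x\rfloor)+h(\lfloor x\rfloor+1)$ is submultiplicative after enlarging the constant, and it satisfies $h\leq\bar h\leq(1+Bh(1))h$ on $\bZ$, so for measures on $\bZ$ finiteness of $h$-moments and $\bar h$-moments coincide). Since $\nu^+$ is finite, hypothesis (ii) says $\int_{|x|>1}\bar h\,\di\nu^+<\infty$, and Theorem~\ref{thm-moments1}(a), applied with the representation function $c(x)=x\mathbf{1}_{\{|x|\leq 1\}}$ and the weight $\bar h$, yields $\int_\bR\bar h\,\di\mu<\infty$, i.e.\ (i); the GRS-condition plays no role here. (Alternatively, this direction can be obtained entirely within the Beurling algebra by applying Theorem~\ref{thm-Bhatt} with $F=\exp$ to $\widetilde g$.)

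The substantive implication is (i)$\Rightarrow$(iii). Write $\mu=\sum_{n\in\bZ}a_n\delta_n$ and set $f(z):=\re^{-\ri kz}\widehat\mu(z)$. Then $f$ is continuous, $2\pi$-periodic (as $k\in\bZ$), nowhere zero, $f(0)=1$, and $b_n(f)=a_{n+k}$. From submultiplicativity $h(m-k)\leq B\,h(m)\,h(-k)$ we get
\[
\sum_{n\in\bZ}h(n)\,|b_n(f)|=\sum_{m\in\bZ}h(m-k)\,a_m\leq B\,h(-k)\sum_{m\in\bZ}h(m)\,a_m<\infty
\]
by hypothesis (i). Moreover $\re^{\widetilde g(z)}=\re^{g(z)}\re^{-\ri kz}=f(z)$, $\widetilde g$ is continuous with $\widetilde g(0)=0$, and $\widetilde g(2\pi)=\widetilde g(0)$ by $2\pi$-periodicity, so $\widetilde g$ is exactly the distinguished logarithm of $f$. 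Hence Corollary~\ref{cor-Bhatt} applies and gives $\sum_{n\in\bZ}h(n)\,|b_n(\widetilde g)|<\infty$, whence
\[
\int_\bR h\,\di|\nu|=\sum_{n\in\bZ\setminus\{0\}}h(n)\,|\nu(\{n\})|=\sum_{n\in\bZ\setminus\{0\}}h(n)\,|b_n(\widetilde g)|<\infty,
\]
which is (iii). This closes the cycle.

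I expect the main obstacle to be the bookkeeping in (i)$\Rightarrow$(iii): one must recast the $h$-moment hypothesis on $\mu$ as weighted absolute summability of the Fourier coefficients of the nonvanishing $2\pi$-periodic function $f=\re^{-\ri k\cdot}\widehat\mu$, and identify $\widetilde g$ (produced and described by Theorem~\ref{thm-integers}) as the distinguished logarithm of $f$ with $\widetilde g(2\pi)=\widetilde g(0)$, in order to legitimately invoke the Beurling-algebra Wiener--L\'evy theorem in the form of Corollary~\ref{cor-Bhatt}. Once the hypotheses of Corollary~\ref{cor-Bhatt} are in place, the conclusion is immediate, and the remaining implications are soft.
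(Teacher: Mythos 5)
Your proposal is correct and follows essentially the same route as the paper: (ii)$\Rightarrow$(i) by extending $h$ to a submultiplicative function on $\bR$ and invoking Theorem~\ref{thm-moments1}, and (i)$\Rightarrow$(iii) by passing (via the drift $k$ and submultiplicativity) to the $2\pi$-periodic nonvanishing function $f=\re^{-\ri k\cdot}\widehat\mu$ with weighted-summable Fourier coefficients and applying Corollary~\ref{cor-Bhatt} to its distinguished logarithm $\widetilde g$, whose Fourier coefficients are $\nu(\{n\})$ by Theorem~\ref{thm-integers}. The paper phrases the drift step as a WLOG reduction to $k=0$ using $\mu\ast\delta_{-k}$, which is the same argument in different clothing.
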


\begin{proof} That (iii) implies (ii) is clear, and that (ii) implies (i) follows from Theorem \ref{thm-moments1}, by observing that every submultiplicative function $h$ on $\bZ$ can be extended to a submultiplicative function on $\bR$ by setting $h(x) := \max \{ h(\lfloor x \rfloor), h(\lceil x \rceil)\}$ for $x\in \bR$, where $\lfloor x \rfloor$ denotes the largest integer smaller than or equal to $x$, and $\lceil x \rceil$ the smallest integer greater than or equal to $x$. It remains to show that (i) implies (iii). For that, let $\mu$ be with drift $k$.
Since
$$h(n-k) \leq B h(n) h(-k) \quad \mbox{and} \quad h(n) \leq B h(n-k) h(k), \quad \forall\; n\in \bZ,$$
it follows that $\mu$ has finite $h$-moment if and only if $\mu \ast \delta_{-k}$ has finite $h$-moment. Since further $\mu$ and $\mu\ast \delta_{-k}$ have the same quasi-L\'evy measure, we can and do assume without loss of generality that $k=0$. Denote by $g$ the distinguished logarithm of $\widehat{\mu}$. From Theorem \ref{thm-integers} we know that $g(2\pi) = g(0) = 0$ and that $\nu(\{n\})$ is the $n$'th Fourier coefficient of $g$. The claim then follows directly from Corollary \ref{cor-Bhatt}, since $\mu(\{n\})$ is the $n$'th Fourier coefficient of $\widehat{\mu}$ and since $\mu$ has finite $h$-moment.
\end{proof}

Theorem \ref{thm-moments-integers} applies in particular to the submultiplicative functions $x\mapsto (|x| \vee 1)^\alpha$ for $\alpha > 0$, $x\mapsto \log (|x|\vee \re)$ and $x\mapsto \exp (\alpha |x|^\beta)$ for $\alpha > 0$ and $\beta \in (0,1)$ since they satisfy the GRS-condition, but not to $x\mapsto \re^{\alpha |x|}$ or $x\mapsto \re^{\alpha x}$ for $\alpha > 0$ since they do not satisfy the GRS-condition.

\bigskip

\noindent
Alexander Lindner, Lei Pan\\
Ulm University,
Institute of Mathematical Finance,
Helmholtzstra{\ss}e 18,
89081 Ulm,
Germany\\
emails: alexander.lindner@uni-ulm.de, lei.pan@uni-ulm.de
\smallskip

\noindent
Ken-iti Sato\\
Hachiman-yama 1101-5-103, Tenpaku-ku, Nagoya, 468-0074 Japan\\
email: ken-iti.sato@nifty.ne.jp

\end{document}